\newcommand{\myincludegraphics}[2]{\begin{tikzpicture}
    \node[inner sep=0pt] (fig) at (0,0) {\includegraphics{#1}};
	\node[above right= 1ex] at (fig.south west) {\footnotesize\textbf{(#2)}};    
\end{tikzpicture}
}
\DeclareDocumentCommand{\Hess}{ O{} }{\operatorname{Hess}_{#1}}
\DeclareDocumentCommand{\sobo}{ m m o o }{%
	\expandafter\ifstrequal\expandafter{#1}{0}{%
		L^{#2}\IfValueT{#3}{(#3 \IfValueT{#4}{; #4} )}
	}{%
		W^{#1,#2}\IfValueT{#3}{(#3 \IfValueT{#4}{; #4} )}
	}
}
\DeclareDocumentCommand{\soboo}{ m m o o }{%
\expandafter\ifstrequal\expandafter{#1}{1}{%
	W_{\mathrlap{0}}{\mathstrut}^{1,#2}\IfValueT{#3}{(#3 \IfValueT{#4}{; #4} )}
}%
{%
	\expandafter\ifstrequal\expandafter{#1}{0}{%
		L^{#2}\IfValueT{#3}{(#3 \IfValueT{#4}{; #4} )}
	}{%
		(W^{#1,#2}\!\cap\!W_{\mathrlap{0}}{\mathstrut}^{1,#2})\IfValueT{#3}{(#3 \IfValueT{#4}{; #4} )}
	}
}}
\newcommand{\ext}{\operatorname{ext}}
\DeclareDocumentCommand{\RM}{  }{M}
\DeclareDocumentCommand{\RX}{ O{\,} O{} O{} }{X_{\!#2}#3#1}
\DeclareDocumentCommand{\RH}{ O{\,} O{} O{} }{H_{\!#2}#3#1}
\DeclareDocumentCommand{\RY}{ O{\,} O{} O{} }{Y_{\!#2}#3#1}
\DeclareDocumentCommand{\RT}{ O{\,} O{} O{} }{T_{\!#2}#3#1}
\DeclareDocumentCommand{\RZ}{ O{\,} O{} O{} }{Z_{\!#2}#3#1}
\DeclareDocumentCommand{\T}{ O{} O{} }{T_{\!#1}#2}
\DeclareDocumentCommand{\Ri}{ O{} }{i_{#1}}
\DeclareDocumentCommand{\Rj}{ O{} }{j_{#1}}
\DeclareDocumentCommand{\RI}{ O{} }{I_{#1}}
\DeclareDocumentCommand{\RJ}{ O{} }{J_{#1}}
\DeclareDocumentCommand{\RK}{ O{} }{K_{#1}}
\DeclareDocumentCommand{\Rg}{ O{} }{g_{#1}}
\DeclareDocumentCommand{\RP}{ O{} }{P_{#1}}
\DeclareDocumentCommand{\RQ}{ O{} }{Q_{#1}}
\DeclareDocumentCommand{\RPsi}{ O{} }{\varPsi_{#1}}
\DeclareDocumentCommand{\RMor}{ }{L}
\newcommand{\UnnamedSpace}{para-Hilbert space\xspace}
\newcommand{\UnnamedSpaces}{para-Hilbert spaces\xspace}
\newcommand{\UnnamedMorphism}{para-Hilbert morphism\xspace}
\newcommand{\UnnamedMorphisms}{para-Hilbert morphisms\xspace}
\newcommand{\UnnamedBundle}{para-Hilbert bundle\xspace}
\newcommand{\UnnamedBundles}{para-Hilbert bundles\xspace}
\newcommand{\UnnamedManifold}{para-Riemannian manifold\xspace}
\newcommand{\UnnamedManifolds}{para-Riemannian manifolds\xspace}
\newcommand{\CUnnamedSpaces}{Para-Hilbert Spaces\xspace}
\newcommand{\CUnnamedManifolds}{Para-Riemannian Manifolds\xspace}
\newcommand{\RieszH}{H}
\newcommand{\ConnComp}{\operatorname{Conn}}
\newcommand{\averageima}{\top}
\newcommand{\averagedom}{\perp}
\newcommand{\lot}{\text{l.o.t.}}
\newcommand{\Willmore}{\cW}
\newcommand{\ElasticEnergy}{\cF}
\newcommand{\ReferenceMetric}{G}
\newcommand{\nospaceperiod}{\makebox[0pt][l]{\,.}}
\newcommand{\vol}{{\on{vol}}}
\newcommand{\res}{{\on{res}}}
\newcommand{\Imm}{\on{Imm}}
\newcommand{\AmbSpace}{{\R^m}}
\newcommand{\AmbMetric}{g_0}
\newcommand{\ConfSpace}{\cC}
\newcommand{\invisible}[1]{}
\let\originalleft\left
\let\originalright\right
\renewcommand{\left}{\mathopen{}\mathclose\bgroup\originalleft}
\renewcommand{\right}{\aftergroup\egroup\originalright}
\newcommand{\qand}{\quad \text{and} \quad}
\newcommand{\qor}{\quad \text{or} \quad}
\newcommand{\loc}{{\on{loc}}}
\newcommand{\pull}{\#}
\DeclareMathOperator{\LandO}{\on{O}}
\newcommand{\transp}{^{\mathsf{T\!}}}
\newcommand{\adj}{^{*\!}}
\newcommand{\dual}{'^{\!}}
\newcommand{\ddual}{''^{\!}}
\newcommand{\pinv}{^{\dagger\!}}
\newcommand{\cA}{{\mathcal{A}}}
\newcommand{\cC}{{\mathcal{C}}}
\newcommand{\cF}{{\mathcal{F}}}
\newcommand{\cT}{{\mathcal{T}}}
\newcommand{\cW}{{\mathcal{W}}}
\DeclareMathOperator{\II}{I\!\;\!I}
\newcommand{\dd}{{\on{d}}}
\newcommand{\at}{|}
\DeclareMathOperator{\Sym}{Sym}
\newcommand{\pd}{\partial}
\newcommand{\ceq}{\coloneqq}
\newcommand{\R}{{\mathbb{R}}}
\newcommand{\N}{\mathbb{N}}
\DeclareMathOperator{\id}{id}
\newcommand{\nabs}[1]{\lvert{#1}\rvert} 
\newcommand{\norm}[1]{\left\lVert#1\right\rVert}
\newcommand{\nnorm}[1]{\lVert{#1}\rVert}
\newcommand{\ninnerprod}[1]{\langle{#1}\rangle}
\newcommand{\paren}[1]{{}\left(#1\right){}}
\newcommand{\bigparen}[1]{{}\big(#1\big){}}
\newcommand{\intervaloo}[1]{\left]#1\right[}
\newcommand{\intervalco}[1]{\left[#1\right[}
\newcommand{\intervalcc}[1]{\left[#1\right]}
\newcommand{\nintervaloo}[1]{{]#1[}}
\newcommand{\nintervalco}[1]{{[#1[}}
\newcommand{\nintervalcc}[1]{{[#1]}}
\newcommand{\nintervaloc}[1]{{]#1]}}
\newcommand{\on}[1]{\operatorname{#1}}
\DeclareMathOperator{\ima}{im}
\DeclareMathOperator{\Hom}{Hom}    
\DeclareMathOperator{\Bil}{Bil}    
\DeclareMathOperator{\tr}{tr}
\DeclareMathOperator{\grad}{grad}  
\DeclareMathOperator{\coker}{coker}
\newcommand{\mynewtheorem}[4] 
{
\newaliascnt{#1}{#2}
\newtheorem{#1}[#1]{#3}
\aliascntresetthe{#1}
\expandafter\def\csname #1autorefname\endcsname{%
#4%
}%
}
\newtheorem{theorem}{Theorem}[section]
\theoremstyle{break}
\theoremstyle{plain}
\theoremstyle{break}
\theoremstyle{nonumberplain}
\newtheorem{proof}{\textsc{Proof}}
\begin{document}
\title{On $H^2$-gradient Flows for the Willmore Energy}
\author{Henrik Schumacher\thanks{
\href{mailto:henrik.schumacher@uni-hamburg.de}{henrik.schumacher@uni-hamburg.de}
}}

\maketitle

\begin{abstract}
We show that the concept of $H^2$-gradient flow for the Willmore energy and other functionals that depend at most quadratically on the second fundamental form is well-defined in the space of immersions of Sobolev class $W^{2,p}$ from a compact, $n$-dimensional manifold into Euclidean space, provided that $p \geq 2$ and $p>n$. We also discuss why this is \emph{not} true for Sobolev class $H^2=W^{2,2}$. In the case of equality constraints, we provide sufficient conditions for the existence of the projected $H^2$-gradient flow and demonstrate its usability for optimization with several numerical examples.
\end{abstract}

\section{Introduction}\label{sec:intro}

Riemannian geometry provides a vast toolbox for the numerical treatment of nonlinear optimization problems. 
Following Hilbert spaces, Riemannian manifolds may be considered as the second nicest kind of spaces to perform (smooth) optimization on (see, e.g., \cite{MR2968868}).
It takes no wonder that there have been several attempts to introduce Riemannian geometry to infinite-dimensional spaces of immersions (see \cite{Eckstein:2007:GSF:1281991.1282017} for inner products based on Sobolev space $H^1$ and their applications in geometry processing) and to shape spaces, the quotient spaces of immersions modulo reparametrization 
(%
see \cite{MR2333829}, 
\cite{MR2201275},
and \cite{MR2888014}).

Many of these attempts have been detailed only for variational problems of one-dimensional shapes, exploiting the Morrey embedding $H^1 \hookrightarrow C^0$.
Indeed, some infinite dimensional problems related to curvature energies of higher-dimensional immersed submanifolds (such as surfaces in $\R^3$), can hardly be put into an economic, strongly Riemannian context.\footnote{Here and in the following, we use the terms \emph{Riemannian manifold} and \emph{strong Riemannian manifold} synonymously.} This is unfortunate as such energies occur frequently in practical applications, e.g., in mechanics as bending energy in the Kirchhoff-Love model for thin plates (see \cite{Kirchhoff1850}, \cite{Love491}); 
in biology as Canham-Helfrich energy of cell membranes (see \cite{CANHAM197061}, \cite{citeulike:6130970}); and in computer graphics as regularizers for various geometry processing tasks (see \cite{MR3285020} and references therein).

A classical and very instructive example is provided by the Willmore energy of an immersion $f \colon \varSigma \to \AmbSpace$ of a compact smooth manifold $\varSigma$ into Euclidean space. Up to some constants, it is given by
\begin{align*}
	\Willmore(f) \ceq \dim(\varSigma)^{-2} \int_\varSigma \nabs{\Delta_f \, f}^2_{\AmbSpace}\, \vol_f.
\end{align*}
Here, $\Delta_f$ denotes the Laplace-Beltrami operator with respect to Riemannian metric $g_f \ceq f^\pull \ninnerprod{\cdot, \cdot}_{\AmbSpace} \ceq \ninnerprod{\dd f\, \cdot, \dd f\, \cdot}_{\AmbSpace}$ induced by $f$ and $\vol_f$ denotes the associated Riemannian volume density. 
This representation of the Willmore energy suggests to use 
\begin{align}
	b_1(u,w) \ceq \int_\varSigma  \ninnerprod{\Delta_f u,\Delta_f w}\, \vol_f
	\qor
	b_2(u,w) \ceq \int_\varSigma  \ninnerprod{(1-\Delta_f) u,(1-\Delta_f) w}\, \vol_f	
	\label{eq:bilinearformsonImm}
\end{align}
as Riemannian metrics on the space of immersions.\footnote{The bilinear form $b_1$ will be positive definite only if suitable additional constraints are imposed on $u$ and $v$ as the locally constant functions form the kernel of $\Delta_f$.} Then, a gradient of $\Willmore$ can be defined by
\begin{align}
	b_i(\grad(\Willmore)\at_f,w) = \ninnerprod{\dd \Willmore\at_f,w}
	\qquad \text{for all variations $w$ of $f$.}
	\label{eq:weakgradienteq}
\end{align}
Obviously, $-\grad(\Willmore)$ is a decending direction, i.e., $ \ninnerprod{\dd \Willmore\at_f,-\grad(\Willmore)\at_f} \leq 0$ with equality if and only if $f$ is a critical point of $\Willmore$.

In local coordinates $(x_1,\dotsc,x_n) \colon U \subset \varSigma \to \R^n$, the Laplace-Beltrami operator reads as follows (up to lower order terms):
\begin{align*}
	\Delta_f \, u = \sum_{i,j=1}^n (\textbf{G}_f^{-1})_{ij} \pd_i\pd_j u + \lot,
\end{align*}
where $(\textbf{G}_f)_{ij}\ceq g_f\bigparen{\frac{\pd}{\pd x_i},\frac{\pd}{\pd x_j}} = \ninnerprod{\pd_i f,\pd_j f}_{\AmbSpace}$ is the Gram matrix of the Riemannian metric $g_f$ with respect to the coordinate vector fields $\frac{\pd}{\pd x_1},\dots,\frac{\pd}{\pd x_n}$.
Moreover, $\vol_f$ can be expressed as $\sqrt{\det{\textbf{G}_f}} \, \dd x$, where $\dd x$ denotes the Euclidean density. Hence, the contribution of $f|_U$ to $\cW(f)$ is given by
\begin{align*}
	\int_U \nabs{(\textbf{G}^{-1}_f)_{ij} \,\pd_i\pd_j f}^2_{\AmbSpace} \textstyle\sqrt{\det{\textbf{G}_f}} \, \dd x + \lot
\end{align*}
For $W^{2,2}(\varSigma;\AmbSpace)$, the Gram matrix $(\textbf{G}_f)_{ij}$ is only of Sobolev class $W^{1,1}$. This shows that the Willmore energy $\Willmore(f)$ is \emph{not} well-defined on the Sobolev space $W^{2,2}(\varSigma;\AmbSpace)$, even although the Willmore energy depends only quadratically on second derivatives of $f$. Moreover, letting $f \in W^{2,2}(\varSigma;\AmbSpace)$  would make it hard to make sense of \eqref{eq:bilinearformsonImm}, let alone discussing the solvability of \eqref{eq:weakgradienteq}.

\begin{figure}
\capstart
\begin{center}
\begin{minipage}{0.45\textwidth}
\begin{center}
\begin{tikzpicture}
    \node[rectangle,
           path picture={
               \node[
               		xshift = - 0.0\textwidth,
               		yshift = 0.0\textwidth
               	] (fig) at (path picture bounding box.center){
                   \includegraphics[width=\textwidth]{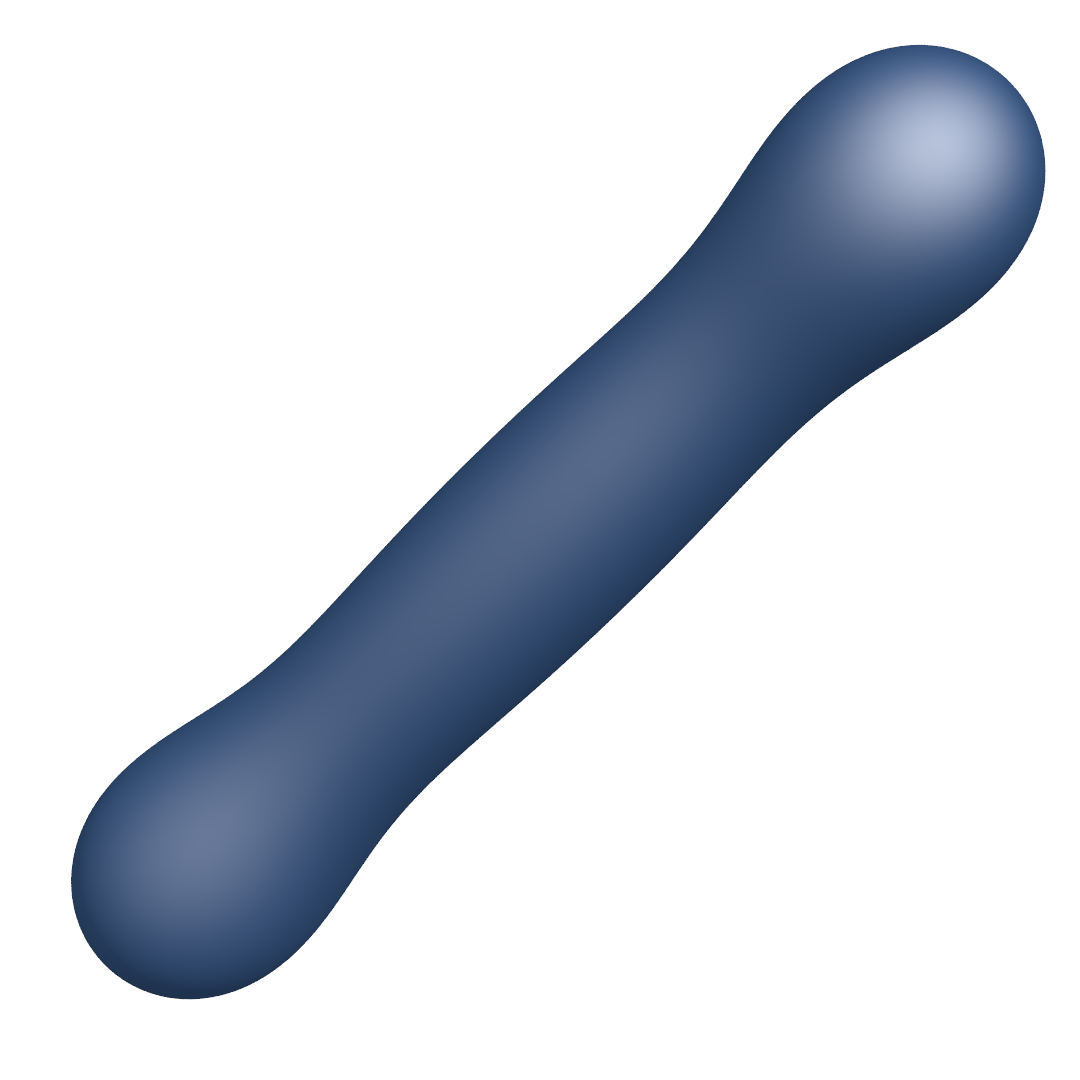}
               };
           },
           minimum width=\textwidth,minimum height=\textwidth
           ] (box) at (0,0)  {          };  	   	
	\node[above right= 1ex] at (box.south west) {\footnotesize\textbf{(a)}};
	\node[above left] (inset) at (box.south east) {
    		\framebox{\includegraphics[scale=0.2]{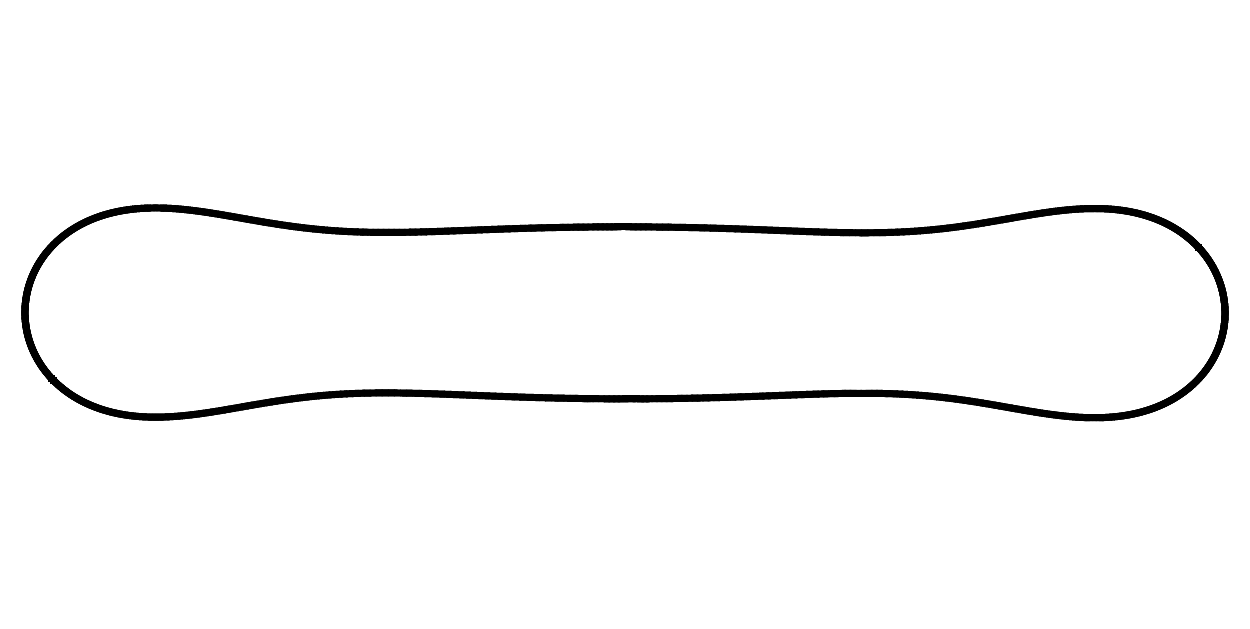}}
    	};	
\end{tikzpicture}
\end{center}
\end{minipage}
\begin{minipage}{0.45\textwidth}
\begin{center}
\begin{tikzpicture}
    \node[rectangle,
           path picture={
               \node[
               		xshift = - 0.0\textwidth,
               		yshift = 0.1\textwidth
               	] (fig) at (path picture bounding box.center){
                   \includegraphics[width=\textwidth]{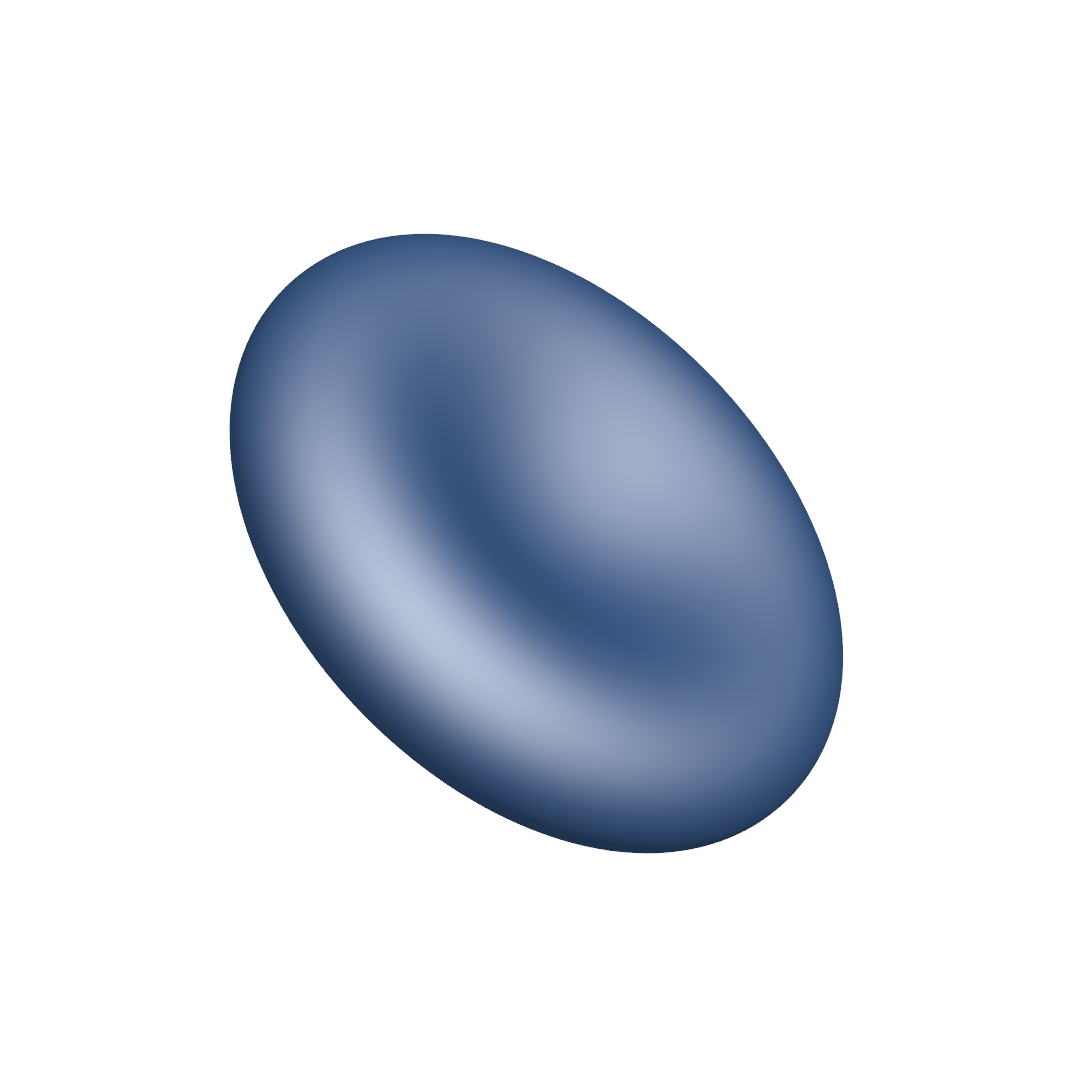}
               };
           },
           minimum width=\textwidth,minimum height=\textwidth
           ] (box) at (0,0)  {          };  	   	
	\node[above right= 1ex] at (box.south west) {\footnotesize\textbf{(b)}};
	\node[above left] (inset) at (box.south east) {
    		\framebox{\includegraphics[scale=0.2]{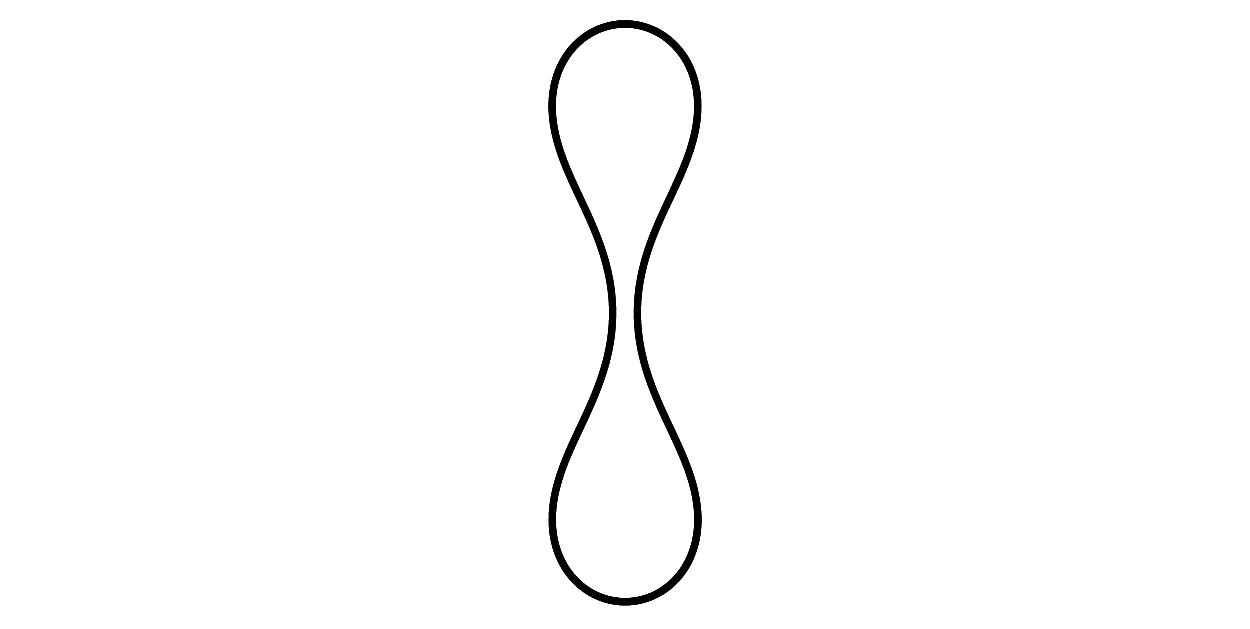}}
    	};	
\end{tikzpicture}
\end{center}
\end{minipage}
\end{center}
\caption{Two discrete local minima  ($328$k faces; insets show cross sections along symmetry axes) of the Willmore energy subject to the same constraints on surface area and enclosed volume ($\text{area}=7.24$, $\text{volume}=1.00$).
The discrete Willmore energy of surface (a) is $30.10$, the one of surface (b) is $26.42$.
}
\label{fig:Canham}
\end{figure}

We could repair this by considering $f \in W^{s,2}(\varSigma;\AmbSpace)$ with $s - \frac{n}{2}>1$ so that $f \in W^{1,\infty}(\varSigma;\AmbSpace)\cap W^{2,2}(\varSigma;\AmbSpace)$. However, even in dimension $n=2$, we would need $s > 2$, which might seem excessively large.
Moreover, neither of the two bilinear forms in \eqref{eq:bilinearformsonImm} is an inner product on the Hilbert space $W^{s,2}(\varSigma;\AmbSpace)$ as both of them induce strictly weaker norms.
One \emph{can} employ bilinear forms such as
\begin{align}
	b_3(u,w) \ceq \int_\varSigma  \ninnerprod{(1-\Delta_f)^\frac{s}{2} u,(1-\Delta_f)^\frac{s}{2} w}\, \vol_f
	\label{eq:bilinearformsonImm2}
\end{align}
(this is the approach considered in \cite{MR2888014}), but we deem it a bit dissatisfactory to adjust the differentiability to the dimension of the manifolds to immerse. Concerning the numerical treatment, solving the gradient equation with conforming Ritz-Galerkin methods would require finite element spaces of class $C^{\lfloor s\rfloor}(\varSigma;\AmbSpace) \subsetneq C^{1}(\varSigma;\AmbSpace)$. Although not impossible, even treating finite element spaces of class $C^{1}(\varSigma;\AmbSpace)$ can already be very complicated in terms of implementation. Note also that solving the gradient equation \eqref{eq:weakgradienteq} numerically becomes increasingly expensive with increasing $s$, as the system matrix rapidly loses sparsity. Hence, we strongly desire to use $s$ as small as possible.

\bigskip

This is why we follow a different direction. Instead of cranking up the differentiability, we may increase the integrability: As we will see in \autoref{sec:CurvatureEnergies}, the space 
\begin{align*}
	\Imm^{2,p}(\varSigma;\AmbSpace)  \ceq \set{
		f \in W^{2,p}(\varSigma;\AmbSpace) 
		| 
		\text{for all $x \in \varSigma$: $\dd f\at_x$ is injective}
		}
\end{align*}
is an open subset of $W^{2,p}(\varSigma;\AmbSpace)$ and the
Willmore energy is a smooth function on $\Imm^{2,p}(\varSigma;\AmbSpace)$,
provided that $p\geq 2$ and $p > \dim(\varSigma)$. As many of the finite element spaces of class $W^{2,2}$ happen to be also of class $W^{2,\infty}$, increasing the integrability of trial functions need not induce any additional costs. 
However, this forces us to take a closer look at the solvability of \eqref{eq:weakgradienteq}. Note that the pairings in \eqref{eq:bilinearformsonImm} are still well-defined for a tangent vector $u \in T_f \Imm^{2,p}(\varSigma;\AmbSpace) =W^{2,p}(\varSigma;\AmbSpace)$ and for $w \in W^{2,q}(\varSigma;\AmbSpace)$, where $q$ is the H\"{o}lder conjugate of $p$. As we have to show later, they induce linear isomorphisms $J_f \colon X_f \to Y_f\dual$ for suitably chosen subspaces $X_f \subset W^{2,p}(\varSigma;\AmbSpace)$ and $Y_f \subset W^{2,q}(\varSigma;\AmbSpace)$. Hence, \eqref{eq:weakgradienteq} is solvable whenever we are able to extend the differential $\dd \Willmore \at_f \in X_i'$ continuously to an element in $Y_f\dual$.
Once put on solid ground, this allows us to make sense of both the $H^2$-gradient flow and of the $H^2$-gradient descent. Note that \emph{discretized} variants of $H^1$ and $H^2$-gradient flows are already known for some time and that they are frequently applied with success in computer graphics applications (see, e.g., \cite{Eckstein:2007:GSF:1281991.1282017}). Hence we regard the theoretical background as our principal contribution.

\bigskip

Although we primarily treat certain curvature energies of immersed manifolds, we aim also at developing a theory that can be applied to other infinite-dimensional problems.
To this end, we introduce the notion of \UnnamedSpaces in \autoref{sec:RieszStructures}.
In order to be able to apply our theory to constrained problems, we have to introduce the notion of the pseudoinverse of linear operators between \UnnamedSpaces. Moreover, we need practicable criteria	 to decide wether a given operator admits a pseudoinverse. Indeed, this will occupy the major part of \autoref{sec:RieszStructures}.

Once we have settled the linear functional analysis of \UnnamedSpaces, we can 
introduce the category of \UnnamedManifolds and apply our knowledge to define gradients and gradient flows of scalar functions (see \autoref{sec:UnnamedManifolds}). Moreover, we provide a constraint qualification for equality constraints that guarantees that the feasible set is again a \UnnamedManifold and show how the projected gradient can be obtained by solving linear saddle point system.
Only then we are able to apply the nonlinear theory to curvature dependent energies (see
\autoref{sec:CurvatureEnergies}).
Finally, we provide some numerical examples to demonstrate the implementability, the robustness, and the efficiency of the introduced concepts in \autoref{sec:NumericalExamples}.


\clearpage

\section{\CUnnamedSpaces}\label{sec:RieszStructures}

We introduce the category of \UnnamedSpaces and develop some part of its functional analysis.
The objects of this category are commutative diagrams. This requires a certain management of nomenclature in order to be able to handle several objects at once. The way we choose may seem quite natural from the perspective of category theory (see \autoref{rem:cattheory} below). Still, we aim at presenting the theory such that it is also accessible for non-experts in category theory.

\subsection{Basic Definitions}\label{sec:RieszStructures_BasisDefinitions}

\begin{definition}\label{dfn:Rieszstructures}
A \UnnamedSpace $E$ consists of
\begin{enumerate}
\item two Banach spaces $\RX[E]$, $\RY[E]$ and a Hilbert space $\RH[E]$;
\item a chain of continuous, linear, and dense injections
$
\begin{tikzcd}[]
	\RX[E]
		\ar[r,hook, two heads,"{\Ri[E]}"] 
	&\RH[E]
		\ar[r,hook, two heads,"{\Rj[E]}"] 
	&
	\RY[E];
\end{tikzcd}
$
\item an inner product $\ninnerprod{\cdot,\cdot}_{\RH[E]}$ on $\RH[E]$ and its Riesz isomorphism $\RI[E] \colon \RH[E] \to \RH[E]'$;
\item  and of an isomorphism of Banach spaces $\RJ[E] \colon \RX[E] \to \RY[E]'$ 
\end{enumerate}
such that the following diagram commutes:
\begin{equation}
\begin{tikzcd}[]
	\RX[E]
		\ar[d,hook, two heads,"{\Ri[E]}"'] 
		\ar[r,"{\RJ[E]}","\cong"'] 
	&\RY[E]'
		\ar[d,hook, two heads,"{\Rj[E]'}"] 
	\\
	\RH[E]
		\ar[r,"{\RI[E]}"',"\cong"]	
	&\RH[E]'\nospaceperiod
\end{tikzcd}\label{eq:Rieszsquarediagram}
\end{equation}
We also write $\RX[E][][']$, $\RH[E][][']$, and $\RY[E][][']$ instead of $\RX[E]'$, $\RH[E]'$,  and $\RY[E]'$.
Moreover, every \UnnamedSpace $E$ exhibits a pre-Hilbert metric $\Rg[E]$ which can be expressed in each of the following ways:
\begin{align*}
	\Rg[E](x_1,x_2) 
	\ceq \ninnerprod{\Ri[E] \, x_1, \Ri[E]\, x_2}_{\RH[E]}
	= \ninnerprod{\RI[E] \, \Ri[E] \, x_1, \Ri[E]\, x_2}	
	= \ninnerprod{\RJ[E] \, x_1, \Rj[E]\,\Ri[E]\, x_2},
	\quad \text{for $x_1$, $x_2 \in \RX[E]$.}
\end{align*}
A \emph{morphism} $A \colon E_1 \to E_2$ between \UnnamedSpaces $E_1$ and $E_2$
is by definition a linear and continuous chain map, i.e., 
a triple $(\RX[A],\RH[A],\RY[A])$ with $\RX[A] \in L(\RX[E_1] ;\RX[E_2])$, $\RH[A] \in L(\RH[E_1] ;\RH[E_2])$, and $\RX[A] \in L(\RY[E_1] ;\RY[E_2])$ such that following diagram commutes:
\begin{equation*}
\begin{tikzcd}[]
	\RX[E_1]
		\ar[d,hook, two heads,"{\Ri[E_1]}"]
		\ar[rr, "{\RX[A]}"]
	&&\RX[E_2]
		\ar[d,hook, two heads,"{\Ri[E_2]}"]	
	\\
	\RH[E_1]
		\ar[d,hook, two heads,"{\Rj[E_1]}"]	
		\ar[rr, "{\RH[A]}"]	
	&&\RH[E_2]
		\ar[d,hook, two heads,"{\Rj[E_2]}"]		
	\\
	\RY[E_1]
		\ar[rr, "{\RY[A]}"]	
	&&\RY[E_2]\nospaceperiod
\end{tikzcd}
\end{equation*}
The space of all \UnnamedMorphisms between $E_1$ and $E_2$ is denoted by $\RMor(E_1;E_2)$.
For morphisms $A \in \RMor(E_1;E_2)$ and $B \in \RMor(E_2;E_3)$ between \UnnamedSpaces $E_1$, $E_2$ and $E_3$, we define their product $A\, B \in \RMor(E_1;E_3)$ by setting $\RZ[(A\,B)] \ceq \RZ[A] \; \RZ[B]$ for all $\RZ \in \{\RX,\RH,\RY\}$.
\end{definition}

\begin{remark}\label{rem:cattheory}
The notation was chosen such that it reflects the fact that
$\RX$, $\RX[][][']$, $\RH$, $\RH[][][']$, $\RY$, and $\RY[][][']$ are functors from the category of \UnnamedSpaces into the category of Banach spaces and that $\Ri$, $\Rj$, $\RI$, and $\RJ$ are natural transformations between them.
Note that the functors $\RX$, $\RH$, and $\RY$ are covariant while their duals $\RX[][][\dual]$, $\RH[][][\dual]$, and $\RY[][][\dual]$ are contravariant.
\end{remark}

\begin{remark}
At first glance, the notion of a \UnnamedSpace seems to boil down to the notion of a \emph{Gelfand triple} or \emph{rigged Hilbert space}, i.e., a topological vector space $\RX$, a Hilbert space $\RH$ together with linear, dense embeddings $\RX \hookrightarrow \RH \hookrightarrow \RX'$. 
However, this is not true, since \UnnamedSpaces involve a \emph{third} Banach space $\RY$ which need not coincide with $\RX'$.
Moreover, we require as additional data that the Riesz isomorphism $\RI \colon \RH \to \RH[][][\dual]$ induces an isormorphism $\RJ \colon \RX \to \RY[][][\dual]$. If $\RY$ were identical to $\RX[][][\dual]$, this would imply $\RX= \RH$.
\end{remark}

A prototypical \UnnamedSpace is given by the following example. 

\begin{example}\label{ex:LpspacesasUnnamedSpace}
Let $(\varOmega,\cA,\mu)$ be a finite measure space, let $p \in \intervalcc{2,\infty}$, and let $q \in \intervalcc{1,2}$ be the H\"older conjugate of $p$. Put $\RX[E] \ceq L^p(\varOmega,\mu)$, $\RH[E] \ceq L^2(\varOmega,\mu)$, $\RY[E] \ceq L^q(\varSigma;\mu)$, and denote by $\Ri[E] \colon L^p(\varOmega,\mu) \hookrightarrow L^2(\varOmega,\mu)$, $\Rj[E] \colon L^2(\varOmega,\mu) \to L^q(\varOmega,\mu)$ the canonical embeddings.
The Riesz isomorphism $\RI[E] \colon L^2(\varOmega,\mu) \to (L^2(\varOmega,\mu))'$ is given by $\ninnerprod{\RI[E]\,v_1,v_2} \ceq \int_\varOmega v_1\,v_2\, \dd \mu$ for $v_1$, $v_2 \in \RH[E]$.
Analogously, one may consider the operator $\RJ[E] \colon L^p(\varOmega,\mu) \to (L^q(\varOmega,\mu))'$ defined by $\ninnerprod{\RJ[E] \,u,w} \ceq \int_\varOmega u\,w\, \dd \mu$ for $u \in \RX[E]$ and $w \in \RY[E]$. Observe that $\RI[E] \circ \Ri[E] = \Rj[E]' \circ \RJ[E]$.
By the Radon-Nikodym theorem, $\RJ[E]$ is an isomorphism of Banach spaces, hence $E$ is a \UnnamedSpace.
Observe that for $p= \infty$, we obtain a \UnnamedSpace which is not reflexive.
\end{example}

Axioms 1--3 of \autoref{dfn:Rieszstructures} imply that a morhism between \UnnamedSpaces is uniquely defined by its values on each of the three scales $\RX$, $\RH$, and $\RY$:
\begin{lemma}[Identity lemma]\label{lem:nulllemma}
Let $E_1$, $E_2$ be \UnnamedSpaces and let $A \in \RMor(E_1;E_2)$ be a morphism. Then the following statements are equivalent:
\begin{gather*}
	\mathrm{1.} \quad A=0.
	\qquad
	\mathrm{2.} \quad \RY[A]=0.	
	\qquad
	\mathrm{3.} \quad \RH[A]=0.	
	\qquad
	\mathrm{4.} \quad \RX[A]=0.
\end{gather*}
\end{lemma}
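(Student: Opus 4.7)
The plan is to exploit two features of the horizontal maps $\Ri[E_\alpha]$ and $\Rj[E_\alpha]$ in \autoref{dfn:Rieszstructures}: they are \emph{injective} and have \emph{dense image}. None of the axioms concerning $\RI$, $\RJ$, or $\RH[E][][\dual]$ will be needed—the density/injectivity of the chain alone is sufficient. Since $A = 0$ means by definition $\RX[A] = \RH[A] = \RY[A] = 0$, the implications $1 \Rightarrow 2,3,4$ are immediate, and it will suffice to establish the cyclic chain $4 \Rightarrow 3 \Rightarrow 2 \Rightarrow 3 \Rightarrow 4$, from which $1$ then follows by combining all three.

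First I would write down the two commutation relations that characterize a morphism:
\begin{align*}
\RH[A] \circ \Ri[E_1] = \Ri[E_2] \circ \RX[A], \qquad \RY[A] \circ \Rj[E_1] = \Rj[E_2] \circ \RH[A].
\end{align*}
For the downward implications $4 \Rightarrow 3 \Rightarrow 2$, I would argue by density: if $\RX[A] = 0$ then the first identity gives $\RH[A] \circ \Ri[E_1] = 0$, so the bounded operator $\RH[A]$ vanishes on the dense subspace $\Ri[E_1](\RX[E_1]) \subset \RH[E_1]$ and hence on all of $\RH[E_1]$ by continuity. The step from $\RH[A] = 0$ to $\RY[A] = 0$ is identical, using the second identity and the density of $\Rj[E_1](\RH[E_1])$ in $\RY[E_1]$.

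For the upward implications $2 \Rightarrow 3 \Rightarrow 4$, I would argue by injectivity of the target side: if $\RY[A] = 0$, then the second identity yields $\Rj[E_2] \circ \RH[A] = 0$; since $\Rj[E_2]$ is an injection, this forces $\RH[A] = 0$. Likewise, the first identity together with the injectivity of $\Ri[E_2]$ gives $\RH[A] = 0 \Rightarrow \RX[A] = 0$. Combining the downward and upward chains shows that vanishing of any one of $\RX[A]$, $\RH[A]$, $\RY[A]$ forces vanishing of the other two, and hence $A = 0$.

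I do not expect any serious obstacle: the whole proof is a short diagram chase once one observes that density handles the top-to-bottom direction and injectivity the bottom-to-top direction. If anything is worth stressing in the write-up, it is that the result uses only half of the \UnnamedSpace structure (the chain of injections) and is a purely category-theoretic fact about chain maps whose arrows are epi-mono; this is a useful motivation for the stronger structural results that will follow.
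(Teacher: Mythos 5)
Your proof is correct, and it is the evident argument behind the lemma: the paper itself states the result without proof (merely noting that Axioms 1--3 of \autoref{dfn:Rieszstructures} imply it), and your diagram chase --- density of $\Ri[E_1]$, $\Rj[E_1]$ for the downward implications, injectivity of $\Ri[E_2]$, $\Rj[E_2]$ for the upward ones --- is exactly what is being left to the reader. Your observation that only the chain of dense injections is needed (in fact only Axioms 1 and 2, not even the inner product of Axiom 3) is accurate and slightly sharpens the paper's own remark.
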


In the category of Hilbert spaces, each closed subspace is split, i.e., it has a closed complement. As it is common knowledge, this does not hold true in the category of Banach spaces: A closed subspace in a Banach space is split if and only if there is a continuous projector onto it. This motivates the following definition.

\begin{definition}\label{dfn:splitsubspace}
Let $E_2$ be a \UnnamedSpace and let
$\RX[E_1] \subset \RX[E_2]$,
$\RH[E_1] \subset \RH[E_2]$,
and
$\RY[E_1] \subset \RY[E_2]$
be split Banach subspaces with $\Ri[E_2](\RX[E_1]) \subset \RH[E_1]$ and $\Rj[E_2](\RH[E_1])  \subset \RY[E_1]$.
We call the triple $E_1 =(\RX[E_1],\RH[E_1],\RY[E_1])$ a \emph{split subspace} of the \UnnamedSpace $E_2$ if there is a \UnnamedMorphism $\RP \in \RMor(E_2;E_2)$ such that $\RP$ is a projector onto $E_1$ (i.e., $PP=P$ and $\RZ[P](\RZ[E_2]) = \RZ[E_1]$ for $\RZ\in \{\RX,\RH,\RY\}$).
\invisible{\autoref{dfn:Rieszstructures}}
\end{definition}

The following lemma shows that split subspaces inherit the denseness properties of an ambient \UnnamedSpace.
\begin{lemma}\label{lem:splitsubspacedense}
Let $E_2$ be a \UnnamedSpace and let $E_1 \subset E_2$ be a split subspace.
Then $\Ri[E_2](\RX[E_1]) \subset \RH[E_1]$ and $\Rj[E_2](\RH[E_1])  \subset \RY[E_1]$ are dense.
\end{lemma}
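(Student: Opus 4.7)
The plan is to exploit the projector $P$ which defines the split subspace and chase it through the commutative chain-map squares that constitute a morphism of \UnnamedSpaces.

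For the first claim, I would start from an arbitrary $h \in \RH[E_1]$ and approximate it by elements of $\Ri[E_2](\RX[E_1])$. By axiom 2 of \autoref{dfn:Rieszstructures} applied to $E_2$, the image $\Ri[E_2](\RX[E_2])$ is dense in $\RH[E_2]$, so I can pick a sequence $x_n \in \RX[E_2]$ with $\Ri[E_2](x_n) \to h$ in $\RH[E_2]$. Applying the continuous operator $\RH[P]$ and using $h \in \RH[E_1] = \RH[P](\RH[E_2])$ together with $\RH[P]\,\RH[P] = \RH[P]$ gives $\RH[P](\Ri[E_2](x_n)) \to \RH[P](h) = h$ in $\RH[E_2]$. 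The commutativity of the upper morphism square for $P \in \RMor(E_2;E_2)$ reads $\RH[P] \circ \Ri[E_2] = \Ri[E_2] \circ \RX[P]$, so the left-hand side equals $\Ri[E_2](\RX[P](x_n))$. Because $\RX[P](x_n) \in \RX[E_1]$, this sequence lies in $\Ri[E_2](\RX[E_1])$.

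To upgrade the convergence from $\RH[E_2]$ to $\RH[E_1]$, I would invoke that a split subspace is in particular closed (the range of a continuous projector is always closed), hence $\RH[E_1]$ carries the subspace topology inherited from $\RH[E_2]$ and the sequence converges to $h$ in $\RH[E_1]$ as well. This establishes $\Ri[E_2](\RX[E_1]) \subset \RH[E_1]$ is dense.

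The second claim, denseness of $\Rj[E_2](\RH[E_1])$ in $\RY[E_1]$, follows by the completely analogous argument: start from $y \in \RY[E_1]$, use the density of $\Rj[E_2](\RH[E_2])$ in $\RY[E_2]$ to obtain $v_n \in \RH[E_2]$ with $\Rj[E_2](v_n) \to y$, apply $\RY[P]$, and invoke the commutativity $\RY[P] \circ \Rj[E_2] = \Rj[E_2] \circ \RH[P]$ from the lower square of the morphism diagram. I do not anticipate any genuine obstacle here; the only point requiring a line of justification is the automatic closedness of a split subspace, which legitimizes identifying the Banach topology of $\RH[E_1]$ (resp.\ $\RY[E_1]$) with the subspace topology from $\RH[E_2]$ (resp.\ $\RY[E_2]$).
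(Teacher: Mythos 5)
Your proof is correct and follows essentially the same route as the paper's: approximate an element of $\RH[E_1]$ (resp.\ $\RY[E_1]$) by the dense image $\Ri[E_2](\RX[E_2])$ (resp.\ $\Rj[E_2](\RH[E_2])$), apply the continuous projector $P$ defining the splitting, and commute it past the injection using the chain-map property of $P \in \RMor(E_2;E_2)$. In fact your version is the cleaner one: the paper's printed computation applies $\id_{\RH[E_2]} - \RH[P]$ where it should apply $\RH[P]$ (for $v$ in the range of the projector one has $\RH[P]\,v = v$, not $(\id_{\RH[E_2]} - \RH[P])\,v = v$), so your argument silently corrects an apparent sign typo, and your closing remark that a split subspace is closed and hence carries the subspace topology makes explicit a point the paper leaves implicit.
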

\begin{proof}
\invisible{\autoref{dfn:splitsubspace}}
Let $v \in \RH[E_1]$ and $w \in \RY[E_1]$. Since $\Ri[E_2] ( \RX[E_2])$ and $\Rj[E_2] ( \RH[E_2])$ are dense in $\RH[E_2]$ and $\RY[E_2]$, respectively, we may write
$
	v = \lim_{n \to \infty} \Ri[E_2] \; u_n
$ and $
	w = \lim_{n \to \infty} \Rj[E_2] \; v_n
$
with suitable $u_n \in \RX[E_2]$, $v_n \in \RH[E_2]$, $n \in \N$. Let $P \in \RMor(E_2;E_2)$ be a projector onto $E_1$. Then we have
\begin{align*}
	v 
	&= (\id_{\RH[E_2]} - \RH[P]) \; v
	= \lim_{n \to \infty} (\id_{\RH[E_2]} - \RH[P]) \; \Ri[E_1] \; u_n 
	= \lim_{n \to \infty} \Ri[E_1] \; (\id_{\RX[E_2]} - \RX[P]) \; u_n.
\end{align*}
Analogously, we obtain
$w = \lim_{n \to \infty} \Rj[E_1] \; (\id_{\RH[E_2]} - \RH[P]) \; v_n$.
\end{proof}

\begin{definition}\label{dfn:generalizedinverse}
Let $A \in \RMor(E_1;E_2)$ be a morphism between \UnnamedSpaces $E_1$ and $E_2$.
We say that $B \in \RMor(E_2;E_1)$ is a \emph{generalized inverse of $A$ (in the category of \UnnamedSpaces)} if
\begin{align*}
	A\,B\,A = A \qand B\,A\, B = B.
\end{align*}
\invisible{\autoref{dfn:Rieszstructures}}
\end{definition}

If $A$ has generalized inverse $B$, then $P = \id_{\RX[E_1]} - B \,A \in L(E_1;E_1)$ is a projector onto $\ker(A)$ and $Q = A \, B \in L(E_2;E_2)$ is a projector onto $\ima(A)$.
Hence both 
\begin{align*}
	\ker(A) \ceq (\ker(\RX[A]),\ker(\RH[A]),\ker(\RY[A]))
	\qand
	\ima(A) \ceq (\ima(\RX[A]),\ima(\RH[A]),\ima(\RY[A]))
\end{align*}
are split subspaces and \autoref{lem:splitsubspacedense} implies:
\begin{corollary}\label{cor:kerimadense}
Let $A \in \RMor(E_1;E_2)$ be a morphism between \UnnamedSpaces.
Suppose that $A$ has a generalized inverse $B \in \RMor(E_2;E_1)$. 
Then each of the following inclusion is dense:
\begin{align*}
	\Ri[E_1] \, ( \ker(\RX[A])) &\subset \ker(\RH[A]),
	&
	\Rj[E_1] \, ( \ker(\RH[A])) &\subset \ker(\RY[A]),
	\\
	\Ri[E_2] \, ( \ima(\RX[A])) &\subset \ima(\RH[A]),
	&
	\Rj[E_2] \, ( \ima(\RH[A])) &\subset \ima(\RY[A]).
\end{align*}
\end{corollary}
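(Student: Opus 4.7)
The plan is to observe that the paragraph immediately preceding the corollary already does the key work: it produces projectors $P = \id_{E_1} - B\,A \in \RMor(E_1;E_1)$ onto $\ker(A)$ and $Q = A\,B \in \RMor(E_2;E_2)$ onto $\ima(A)$. So the goal reduces to recognizing $\ker(A)$ and $\ima(A)$ as split subspaces in the sense of \autoref{dfn:splitsubspace} and then invoking \autoref{lem:splitsubspacedense} four times.

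First I would verify the two chain conditions demanded by \autoref{dfn:splitsubspace}. For $\ker(A)$, pick $x \in \ker(\RX[A])$; the commutativity of the morphism diagram for $A$ gives $\RH[A]\,\Ri[E_1]\,x = \Ri[E_2]\,\RX[A]\,x = 0$, so $\Ri[E_1]\,x \in \ker(\RH[A])$. The same argument with $\Rj$ in place of $\Ri$ yields $\Rj[E_1](\ker(\RH[A])) \subset \ker(\RY[A])$. For $\ima(A)$, given $y = \RX[A]\,x$, commutativity reads $\Ri[E_2]\,y = \Ri[E_2]\,\RX[A]\,x = \RH[A]\,\Ri[E_1]\,x \in \ima(\RH[A])$, and again the analogous statement for $\Rj$ holds. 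The \emph{split} character of the three Banach subspaces on each scale comes for free from the components $\RX[P],\RH[P],\RY[P]$ and $\RX[Q],\RH[Q],\RY[Q]$, which are continuous projectors by assumption that $P$ and $Q$ belong to $\RMor$.

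With $\ker(A)$ and $\ima(A)$ thus identified as split subspaces of $E_1$ and $E_2$ respectively, \autoref{lem:splitsubspacedense} applies directly and yields, in one stroke, that
\begin{align*}
	\Ri[E_1](\ker(\RX[A])) &\subset \ker(\RH[A]),
	& \Rj[E_1](\ker(\RH[A])) &\subset \ker(\RY[A]),\\
	\Ri[E_2](\ima(\RX[A])) &\subset \ima(\RH[A]),
	& \Rj[E_2](\ima(\RH[A])) &\subset \ima(\RY[A])
\end{align*}
are all dense, which is the claim.

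There is no real obstacle here; the only point that requires a second's thought is that the projector property $\RZ[P]\RZ[E_1] = \ker(\RZ[A])$ for $\RZ \in \{\RX,\RH,\RY\}$ (and analogously for $Q$ and $\ima$) is exactly what \autoref{dfn:splitsubspace} demands, so no additional verification beyond the chain-map compatibility shown above is needed before firing off \autoref{lem:splitsubspacedense}.
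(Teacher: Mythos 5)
Your proposal is correct and follows exactly the route the paper takes: it observes that $P = \id_{E_1} - B\,A$ and $Q = A\,B$ are \UnnamedMorphisms projecting onto $\ker(A)$ and $\ima(A)$, concludes that these are split subspaces in the sense of \autoref{dfn:splitsubspace}, and then applies \autoref{lem:splitsubspacedense}. The only difference is that you spell out the chain-compatibility inclusions that the paper leaves implicit, which is a harmless (and arguably welcome) addition.
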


\subsection{Reflexive \CUnnamedSpaces}

If $A$ is a morphism of \UnnamedSpaces, we may introduce an adjoint of $\RX[A]$ as in the following proposition. In order to define an adjoint morphism $A^*$, we will have to be content with the smaller class of reflexive \UnnamedSpaces (see below).
\begin{proposition}\label{prop:adjXA}
Let $A \in \RMor(E_1;E_2)$ be a morphism of \UnnamedSpaces. Define the \emph{adjoint $\RX[A][][\adj]$ of $\RX[A]$} by $\RX[A][][\adj] \ceq \RJ[E_1]^{-1} \; \RY[A][][\dual] \; \RJ[E_2]$. 
The adjoint $\RX[A][][\adj]$ is a continuous linear operator from $\RX[E_2]$ to $\RX[E_1]$ and satisfies
\begin{align*}
	\Rg[E_1](\RX[A][][\adj] \; u_2, u_1) 
	= \Rg[E_2](u_2, \RX[A] \; u_1) 
	\quad
	\text{for all $u_1 \in \RX[E_1]$ and $u_2 \in \RX[E_2]$.}
\end{align*}
\end{proposition}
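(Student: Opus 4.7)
The plan is to verify the two claims directly from the definitions. For continuity, I would observe that $\RX[A][][\adj]$ is by construction a composition of three continuous linear maps: $\RJ[E_2] \in L(\RX[E_2];\RY[E_2]')$, the Banach dual $\RY[A][][\dual] \in L(\RY[E_2]';\RY[E_1]')$, and the inverse $\RJ[E_1]^{-1} \in L(\RY[E_1]';\RX[E_1])$ (which is continuous because $\RJ[E_1]$ is an isomorphism of Banach spaces by axiom~4 of \autoref{dfn:Rieszstructures}). Hence $\RX[A][][\adj] \in L(\RX[E_2];\RX[E_1])$.

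For the compatibility identity I would use the third expression for the pre-Hilbert metric given in \autoref{dfn:Rieszstructures}, namely $\Rg[E](x_1,x_2) = \ninnerprod{\RJ[E]\,x_1,\Rj[E]\,\Ri[E]\,x_2}$. The strategy is to rewrite the left-hand side, slide $\RY[A][][\dual]$ over to the right factor via the definition of the Banach space dual, and then use the commutativity of the chain map diagram defining a morphism to move $\RY[A]$ through $\Rj[E_2]\,\Ri[E_2]$. Concretely, for $u_1 \in \RX[E_1]$ and $u_2 \in \RX[E_2]$ the computation runs
\begin{align*}
	\Rg[E_1](\RX[A][][\adj]\,u_2,u_1)
	&= \ninnerprod{\RJ[E_1]\,\RX[A][][\adj]\,u_2,\Rj[E_1]\,\Ri[E_1]\,u_1}
	= \ninnerprod{\RY[A][][\dual]\,\RJ[E_2]\,u_2,\Rj[E_1]\,\Ri[E_1]\,u_1} \\
	&= \ninnerprod{\RJ[E_2]\,u_2,\RY[A]\,\Rj[E_1]\,\Ri[E_1]\,u_1}
	= \ninnerprod{\RJ[E_2]\,u_2,\Rj[E_2]\,\Ri[E_2]\,\RX[A]\,u_1}
	= \Rg[E_2](u_2,\RX[A]\,u_1),
\end{align*}
where the first step is the alternative form of $\Rg[E_1]$, the second inserts the definition $\RX[A][][\adj] = \RJ[E_1]^{-1}\,\RY[A][][\dual]\,\RJ[E_2]$, the third is the defining property of the Banach dual $\RY[A][][\dual]$, and the fourth applies the commutativity of the two squares in the morphism diagram (first $\RY[A]\circ \Rj[E_1] = \Rj[E_2]\circ \RH[A]$, then $\RH[A]\circ \Ri[E_1] = \Ri[E_2] \circ \RX[A]$).

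The computation is essentially a diagram chase, so there is no genuine obstacle; the only point that requires a moment of care is that one is allowed to read off the identity $\RY[A]\,\Rj[E_1]\,\Ri[E_1] = \Rj[E_2]\,\Ri[E_2]\,\RX[A]$ by two successive applications of the chain-map property built into \autoref{dfn:Rieszstructures}, which is what makes the two pre-Hilbert metrics compatible under the morphism $A$.
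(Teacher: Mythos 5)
Your proposal is correct and follows essentially the same route as the paper's own proof: the identical chain of five equalities, using the third expression for $\Rg[E]$, the definition of $\RX[A][][\adj]$, the duality pairing, and the chain-map property. The explicit continuity remark (composition of three continuous linear maps, with $\RJ[E_1]^{-1}$ continuous since $\RJ[E_1]$ is a Banach space isomorphism) is a harmless addition that the paper leaves implicit.
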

\begin{proof}
\invisible{\autoref{dfn:Rieszstructures}}
\begin{align*}
	\Rg[E_1](\RX[A][][\adj] \; u_2, u_1) 
	&= \ninnerprod{ \RJ[E_1] \; \RX[A][][\adj] u_2, \Rj[E_1] \; \Ri[E_1] \; u_1 }
	= \ninnerprod{ \RY[A][][\dual ] \; \RJ[E_2] \; u_2, \Rj[E_1] \; \Ri[E_1] \; u_1 }	
	\\	
	&= \ninnerprod{ \RJ[E_2] \; u_2, \RY[A] \; \Rj[E_1] \; \Ri[E_1] \; u_1 }		
	= \ninnerprod{ \RJ[E_2] \; u_2,  \Rj[E_2] \; \Ri[E_2] \; \RX[A] \; u_1 }			
	= \Rg[E_2](u_2, \RX[A] \; u_1 ).
\end{align*}
\end{proof}

\begin{definition}\label{dfn:reflexive}
We say that a \UnnamedSpace $E$ is \emph{reflexive} if $\RY[E]$ is reflexive. (In this case, $\RX[E]$ is also reflexive since it is isomorphic to the reflexive Banach space $\RY[E][][\dual]$.)
\invisible{\autoref{dfn:Rieszstructures}}
\end{definition}

\begin{proposition}\label{prop:Koperator}
Let $E$ be a reflexive \UnnamedSpace.\invisible{\autoref{dfn:reflexive}}
Denote the canonical embeddings by $\RPsi[\RX[E]] \colon \RX[E] \to \RX[E][][\ddual]$ and $\RPsi[\RY[E]] \colon \RY[E] \to \RY[E][][\ddual]$.
Then $\RK[E] \ceq \RJ[E]' \,\RPsi[\RY[E]] \colon \RY[E] \to \RX[E][][\dual]$ is the unique isomorphism of Banach spaces which makes the following diagram commutative
\begin{equation}
\begin{tikzcd}[]
	\RX[E]
		\ar[r,"{\RJ[E]}"]
		\ar[d,hook, two heads,"{\Ri[E]}"']	
	& \RY[E][][']
		\ar[d,hook, two heads,"{\Rj[E]\dual}"]	
	\\
	\RH[E]
		\ar[r,"{\RI[E]}"]
		\ar[d,hook, two heads,"{\Rj[E]}"']		
	&\RH[E][][\dual]
		\ar[d,hook, two heads,"{\Ri[E]\dual}"]		
	\\
	\RY[E]
		\ar[r,dashed,"{\RK[E]}"]
	&\RX[E][][\dual]	\nospaceperiod			
\end{tikzcd}\label{eq:RK}
\end{equation}
Moreover, one has the identity $\RK[E]\dual \; \RPsi[\RX[E]] = \RJ[E]$.
\end{proposition}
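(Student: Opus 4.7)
The proposition has two parts: the uniqueness and invertibility of $\RK[E]$ completing the diagram \eqref{eq:RK}, and the dual identity $\RK[E]\dual \, \RPsi[\RX[E]] = \RJ[E]$. My plan is to verify each in turn; neither presents a serious obstacle once the bookkeeping of duals is in place.

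First I would observe that the prescribed formula $\RK[E] = \RJ[E]\dual \, \RPsi[\RY[E]]$ defines a Banach-space isomorphism $\RY[E] \to \RX[E]\dual$, because axiom~4 of \autoref{dfn:Rieszstructures} makes $\RJ[E]$ (and hence its Banach dual $\RJ[E]\dual$) invertible, while reflexivity of $E$ makes $\RPsi[\RY[E]]$ an isomorphism. Next I would verify commutativity of the lower square of \eqref{eq:RK}, namely $\RK[E] \, \Rj[E] = \Ri[E]\dual \, \RI[E]$; evaluating both sides on a generic pair $(v,u) \in \RH[E] \times \RX[E]$ and unwinding the definition of $\RK[E]$ reduces the claim to the identity $\ninnerprod{\RJ[E] u, \Rj[E] v} = \ninnerprod{\RI[E] v, \Ri[E] u}$, which follows by applying the commutative square \eqref{eq:Rieszsquarediagram} to $u$ together with the symmetry of the inner product on $\RH[E]$.

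Uniqueness of the bottom arrow is essentially free: any continuous candidate $\RY[E] \to \RX[E]\dual$ that fits into \eqref{eq:RK} must coincide with $\RK[E]$ on $\Rj[E](\RH[E])$, and by axiom~2 of \autoref{dfn:Rieszstructures} this subspace is dense in $\RY[E]$, so continuity extends the equality everywhere.

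For the final identity, rather than invoking naturality abstractly I would test both sides against a generic $w \in \RY[E]$ and chase the pairings. Using only the definition $\RK[E] = \RJ[E]\dual \, \RPsi[\RY[E]]$ together with the defining property of the canonical embedding $\RPsi[\RX[E]]$, one computes
\begin{align*}
	\ninnerprod{\RK[E]\dual \, \RPsi[\RX[E]] u, w}
	&= \ninnerprod{\RPsi[\RX[E]] u, \RK[E] w}
	= \ninnerprod{\RK[E] w, u}\\
	&= \ninnerprod{\RPsi[\RY[E]] w, \RJ[E] u}
	= \ninnerprod{\RJ[E] u, w}
\end{align*}
for all $u \in \RX[E]$ and $w \in \RY[E]$, which establishes the identity. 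The only delicate point throughout is keeping straight which prime denotes the dual of a space and which the dual of a map, consistent with the category-theoretic emphasis of \autoref{rem:cattheory}.
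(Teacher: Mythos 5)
Your proposal is correct and follows essentially the same route as the paper's proof: unwind the definition $\RK[E]=\RJ[E]\dual\,\RPsi[\RY[E]]$ in a duality-pairing chase using the axiom square \eqref{eq:Rieszsquarediagram} and the symmetry of the inner product, get uniqueness from the density of $\Rj[E]$, and verify $\RK[E]\dual\,\RPsi[\RX[E]]=\RJ[E]$ by testing against $u\in\RX[E]$ and $w\in\RY[E]$. The only (harmless) cosmetic differences are that you check the lower square on all of $\RH[E]$ rather than the outer rectangle on the dense image $\Rj[E]\,\Ri[E](\RX[E])$, and that you make explicit why reflexivity renders $\RK[E]$ an isomorphism, which the paper leaves implicit.
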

\begin{proof}
For $u_1$, $u_2 \in \RX[E]$, one computes
\begin{align*}
	\ninnerprod{\RK[E]\; \Rj[E] \; \Ri[E] \; u_1, u_2} 
	&= \ninnerprod{\RJ[E]' \,\RPsi[\RY[E]] \; \Rj[E] \; \Ri[E] \; u_1, u_2} 
	 = \ninnerprod{\RPsi[\RY[E]] \; \Rj[E] \; \Ri[E] \; u_1, \RJ[E] \; u_2} 	
	\\
	&= \ninnerprod{\RJ[E] \; u_2 , \Rj[E] \; \Ri[E] \; u_1} 	
  	 = \ninnerprod{\Rj[E]\dual \;\RJ[E] \; u_2 , \Ri[E] \; u_1} 	
	\\
	&= \ninnerprod{\RI[E] \; \Ri[E] \; u_2 , \Ri[E] \; u_1} 	
	 = \ninnerprod{\RI[E] \; \Ri[E] \; u_1 , \Ri[E] \; u_2} 	
	 = \ninnerprod{ \Ri[E]\dual \; \Rj[E]\dual \;\RJ[E] \; u_1 , u_2}.
\end{align*}
This shows that the diagram is commutative. Uniqueness follows from the denseness of the injections $\Ri[E]$ and $\Rj[E]$. Now let $u \in \RX[E]$ and $w \in \RY[E]$. With $\RK[E] \ceq \RJ[E]\dual \; \RPsi[\RY[E]]$, we compute
\begin{align*}
	\ninnerprod{\RK[E]\dual \; \RPsi[\RX[E]]\; u, w}
	&=\ninnerprod{\RPsi[\RY[E]]\dual \; \RJ[E]\ddual \; \RPsi[\RX[E]] \; u, w}
	= \ninnerprod{\RPsi[\RX[E]] \; u, \RJ[E]\dual \; \RPsi[\RY[E]] \; w}	
	\\
	&= \ninnerprod{\RJ[E]\dual \; \RPsi[\RY[E]] \; w , u}	
	= \ninnerprod{\RPsi[\RY[E]] \; w , \RJ[E] \; u}	
	= \ninnerprod{\RJ[E] \; u , w},
\end{align*}
showing that $\RK[E]\dual \; \RPsi[\RX[E]] = \RJ[E]$.
\end{proof}

\begin{definition}\label{dfn:adjointmorphism}
Let $A \in \RMor(E_1;E_2)$ be a morphism between reflexive \UnnamedSpaces.
With \autoref{prop:Koperator}, we may also define the \emph{adjoint} of $\RY[A]$ by $\RY[A][][\adj]  \ceq  \RK[E_1]^{-1}\, \RX[A][][\dual]\, \RK[E_2]$.
Observe that the triple $( \RX[A][][\adj],\RH[A][][\adj],\RY[A][][\adj])$ is again a \UnnamedMorphism. Thus, we say that $A\adj \ceq ( \RX[A][][\adj],\RH[A][][\adj],\RY[A][][\adj]) \in \RMor(E_2;E_1)$ is the \emph{adjoint \UnnamedMorphism of $A$}. 
This whole setting is summarized in the commutative diagram below.
Note its mirror symmetry with respect to the virtual line through $\RI[E_2]$ and $\RI[E_1]$.
\begin{equation}
\begin{tikzcd}[column sep=4em, row sep=.75em]
	&\RY[E_2][][']
		\ar[rrr,"{\RY[A][][\dual]}", pos=0.3]
		\ar[ddd,hook, two heads,"{\Rj[E_2]'}"', pos=0.7]
	&
	&
	&\RY[E_1][][\dual]
		\ar[ddd,hook, two heads,"{\Rj[E_1]'}", pos=0.5]
	\\
	\RX[E_2]
		\ar[ur,"{\RJ[E_2]}"]
		\ar[rrr,"{\RX[A][][\adj]}", pos=0.7, crossing over]		
		\ar[ddd,hook, two heads,"{\Ri[E_2]}"']	
	&
	&
	&\RX[E_1]
		\ar[ur,"{\RJ[E_1]}"']	
	\\
	\\	
	&\RH[E_2][][\dual]
		\ar[rrr,"{\RH[A][][\dual]}", pos=0.3]
		\ar[ddd,hook, two heads,"{\Ri[E_2]'}"', pos=0.7]		
	&
	&
	&\RH[E_1][][\dual]
		\ar[ddd,hook, two heads,"{\Ri[E_1]'}"]			
	\\
	\RH[E_2]
		\ar[ur,"{\RI[E_2]}"]
		\ar[rrr,"{\RH[A][][\adj]}", pos=0.7, crossing over]	
		\ar[ddd,hook, two heads,"{\Rj[E_2]}"']		
	&
	&
	&\RH[E_1]
		\ar[ur,"{\RI[E_1]}"']	
		\ar[from=uuu,hook, two heads,"{\Ri[E_1]}", pos=0.3, crossing over]									
	\\
	\\	
	&\RX[E_2][][\dual]
		\ar[rrr,"{\RX[A][][\dual]}", pos=0.3]
	&
	&
	&\RX[E_1][][\dual]
	\\
	\RY[E_2]
		\ar[ur,"{\RK[E_2]}"]
		\ar[rrr,dashed,"{\RY[A][][\adj]}", pos=0.7]		
	&
	&
	&\RY[E_1]
		\ar[ur,"{\RK[E_1]}"']					
		\ar[from=uuu,hook, two heads,"{\Rj[E_1]}", pos=0.3, crossing over]				
\end{tikzcd}\label{eq:RYstar}
\end{equation}
\end{definition}

\subsection{Pseudoinverses}

A straight-forward application of the identity lemma (see \autoref{lem:nulllemma}) leads us to the following:
\begin{proposition}\label{prop:pseudoinverse}
Let $E_1$ and $E_2$ be \UnnamedSpaces, let $A \in \RMor(E_1;E_2)$ be a morphism and let $B \in \RMor(E_2;E_1)$ be a generalized inverse.
We say that $B$ is a \emph{pseudoinverse} of $A$ if and only if $\RH[(A \,B)][][\adj] =\RH[(A \, B)]$ and $\RH[(B \,A)][][\adj] =\RH[(B \, A)]$ hold true, i.e, both $\RH[(A \,B)]$ and $\RH[(B \,A)]$ are orthoprojectors.
The pseudoinverse, if existent, is unique and we denote it by $A\pinv$.
Moreover, $\RH[A][][\pinv]$ coincides with the Moore-Penrose pseudoinverse of $\RH[A]$.

If $E_1$ and $E_2$ are reflexive then a generalized inverse $B$ of $A$ is the pseudoinverse if and only if $(A\,B)\adj = A\,B$ and $(B\,A)\adj = B\,A$ hold true.
\invisible{\autoref{dfn:generalizedinverse}}
\end{proposition}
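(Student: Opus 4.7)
The plan is to reduce every claim to the Hilbert scale $\RH$, where the classical Moore--Penrose theory for bounded operators between Hilbert spaces applies, and then to lift statements back to the level of full morphisms by means of the identity lemma (\autoref{lem:nulllemma}), which asserts that a morphism between \UnnamedSpaces is uniquely determined by its $\RH$-component.

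\emph{Identification with the Moore--Penrose inverse, and uniqueness.} Applying the functor $\RH$ to the generalized-inverse identities $A\,B\,A=A$ and $B\,A\,B=B$ gives $\RH[A]\,\RH[B]\,\RH[A]=\RH[A]$ and $\RH[B]\,\RH[A]\,\RH[B]=\RH[B]$. Together with the two orthoprojector conditions $\RH[(A\,B)][][\adj]=\RH[(A\,B)]$ and $\RH[(B\,A)][][\adj]=\RH[(B\,A)]$ built into the definition, the operator $\RH[B]$ satisfies all four defining equations of the Moore--Penrose pseudoinverse of the bounded Hilbert-space operator $\RH[A]\in L(\RH[E_1];\RH[E_2])$. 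Classical uniqueness of the Moore--Penrose inverse in Hilbert spaces then forces $\RH[B]=\RH[A]\pinv$, which proves the ``moreover'' claim. Uniqueness of $B$ as a morphism is now immediate: if $B_1$ and $B_2$ are both pseudoinverses of $A$, then $\RH[B_1]=\RH[A]\pinv=\RH[B_2]$, so the morphism $B_1-B_2$ has vanishing $\RH$-component, and the identity lemma yields $B_1=B_2$.

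\emph{The reflexive case.} Here I first verify that adjunction is contravariantly functorial on morphisms of reflexive \UnnamedSpaces, i.e., $(A\,B)\adj=B\adj\,A\adj$ (and similarly for $B\,A$). This is a componentwise check using \autoref{prop:adjXA} and \autoref{dfn:adjointmorphism}: on the $\RH$-scale it reduces to the standard rule $(\RH[A]\,\RH[B])^{*}=\RH[B]^{*}\,\RH[A]^{*}$ for Hilbert-space operators, while on the $\RX$- and $\RY$-scales it is a direct unfolding of definitions via the isomorphisms $\RJ$ and $\RK$. In particular $(A\,B)\adj\in\RMor(E_2;E_2)$ and $(B\,A)\adj\in\RMor(E_1;E_1)$ are bona fide morphisms. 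The implication $((A\,B)\adj=A\,B,\,(B\,A)\adj=B\,A)\Rightarrow$ the orthoprojector conditions is then immediate by passing to $\RH$-components. For the converse, assume $\RH[(A\,B)]$ and $\RH[(B\,A)]$ are orthoprojectors. Then $(A\,B)-(A\,B)\adj$ and $(B\,A)-(B\,A)\adj$ are morphisms whose $\RH$-components vanish, and the identity lemma upgrades this to the full-morphism identities $(A\,B)\adj=A\,B$ and $(B\,A)\adj=B\,A$.

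\emph{Anticipated main obstacle.} The only technical point is verifying that the $\RH$-component of the adjoint morphism $A\adj$ from \autoref{dfn:adjointmorphism} genuinely coincides with the Hilbert-space adjoint of $\RH[A]$, so that ``orthoprojector on the $\RH$-scale'' really matches the self-adjointness condition in the reflexive characterization. This amounts to a short diagram chase around the central Riesz square in \eqref{eq:RYstar}, and it is the only step where reflexivity of $E_1$ and $E_2$ is used in an essential way; everything else is a formal consequence of the identity lemma.
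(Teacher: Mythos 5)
Your proof is correct and follows exactly the route the paper intends: the paper gives no written proof but introduces the proposition as ``a straight-forward application of the identity lemma,'' and your argument is precisely that---reduce to the $\RH$-scale, invoke the classical algebraic uniqueness of the Moore--Penrose inverse there, and lift uniqueness and the self-adjointness identities back to full morphisms via \autoref{lem:nulllemma}, noting that by \autoref{dfn:adjointmorphism} the $\RH$-component of $A\adj$ is by construction the Hilbert-space adjoint of $\RH[A]$. No gaps.
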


We have primarily operators in mind which originate from the linearization of constraints. Thus, surjective operators are of particular interest to us.

\begin{lemma}\label{lem:pseudoinverseofsurjection}
Let $E_1$ and $E_2$ be reflexive \UnnamedSpaces, let $A \in \RMor(E_1;E_2)$ be a morphism with right inverse $B \in \RMor(E_2;E_1)$.
Then the following statements are equivalent:
\begin{enumerate}
	\item $A\,A\adj$ is continuously invertible.\label{item:AAstarinvertible}	
	\item $A$ admits a pseudoinverse.\label{item:Apinvexists}	
	\item $A\,A\adj$ has closed range, i.e., $\RZ[(A\,A\adj)]$ has closed range for each $\RZ \in \{\RX,\RH,\RY\}$.\label{item:AAstarclosedrange}
\end{enumerate}
Moreover, if these conditiond are satisfied, we have the identities
\begin{align}
	A\pinv = A\adj\,(A\,A\adj)^{-1}
	\qand
	(A\,A\adj)^{-1} = (A\pinv)\adj \, A\pinv.
	\label{eq:pseudoinverseformulasurjective}
\end{align}
\end{lemma}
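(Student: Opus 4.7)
My plan is to handle the circle of implications as $(1) \Leftrightarrow (2)$ through the explicit formulas of \eqref{eq:pseudoinverseformulasurjective}, dispatch $(1) \Rightarrow (3)$ as trivial (invertibility forces surjectivity on each scale, hence closed range), and reserve the main analytic effort for $(3) \Rightarrow (1)$.

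For $(1) \Rightarrow (2)$, I would set $B \ceq A\adj (A\,A\adj)^{-1} \in \RMor(E_2;E_1)$. Surjectivity of $A$ (from the hypothesized right inverse) makes $A\,A\adj$, and hence also its inverse, para-self-adjoint in the sense of \autoref{dfn:adjointmorphism}. A direct verification then shows $A\,B = A\,A\adj\,(A\,A\adj)^{-1} = \id_{E_2}$, $B\,A\,B = B$, and the para-self-adjointness of $A\,B = \id_{E_2}$ and $B\,A = A\adj (A\,A\adj)^{-1} A$, so that \autoref{prop:pseudoinverse} identifies $B$ with $A\pinv$. Conversely, for $(2) \Rightarrow (1)$, surjectivity of $A$ forces the orthoprojector $A\,A\pinv$ to equal $\id_{E_2}$; taking para-adjoints yields $(A\pinv)\adj A\adj = \id_{E_2}$. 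Combined with the self-adjointness relation $A\adj (A\pinv)\adj = A\pinv\,A$, one computes $A\,A\adj \cdot (A\pinv)\adj A\pinv = A\,A\pinv\,A\,A\pinv = \id_{E_2}$ and symmetrically $(A\pinv)\adj A\pinv \cdot A\,A\adj = \id_{E_2}$, which both establishes invertibility of $A\,A\adj$ and yields $(A\,A\adj)^{-1} = (A\pinv)\adj A\pinv$.

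For $(3) \Rightarrow (1)$, I would first establish injectivity of $T \ceq A\,A\adj$ on every scale. On $\RX[E_2]$ the identity $\Rg[E_2](\RX[T] u, u) = \Rg[E_1](\RX[A]\adj u, \RX[A]\adj u)$ together with positivity of $\Rg[E_1]$, injectivity of $\Ri[E_1]$, and surjectivity of $\RX[A]$ forces $\ker \RX[T] = \{0\}$; on $\RH[E_2]$ the standard Hilbert identity $\ker \RH[T] = (\ima \RH[A])^\perp = \{0\}$ suffices. Injectivity, Hilbert self-adjointness and the hypothesized closed range then make $\RH[T]$ invertible. Turning to the $\RY$-scale, the commuting square $\RY[T]\,\Rj[E_2] = \Rj[E_2]\,\RH[T]$ together with bijectivity of $\RH[T]$ yields $\RY[T](\Rj[E_2]\,\RH[E_2]) = \Rj[E_2]\,\RH[E_2]$, so $\ima \RY[T]$ contains the dense subspace $\Rj[E_2]\,\RH[E_2]$ of $\RY[E_2]$; with closed range of $\RY[T]$, density upgrades to surjectivity.

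The principal obstacle is the final step, namely injectivity of $\RY[T]$ (equivalently surjectivity of $\RX[T]$). Here I would exploit that para-self-adjointness of $T$ translates, via \autoref{prop:Koperator}, into $\RJ[E_2]\,\RX[T] = \RY[T]'\,\RJ[E_2]$, so the Banach dual $(\RX[T])'$ is conjugate to $\RY[T]$ through the isomorphism $\RK[E_2]$. The Banach closed-range theorem then gives $\ima \RX[T] = {}^\perp\!\ker(\RX[T])'$, reducing surjectivity of $\RX[T]$ to triviality of $\ker \RY[T]$. To establish $\ker \RY[T] = \{0\}$, I would invoke the $\RH$-level once more: the same commuting square forces $\ker \RY[T] \cap \Rj[E_2]\,\RH[E_2] = \{0\}$ via the bijectivity of $\RH[T]$, and a Hahn-Banach/density argument combining the pairing identity $\langle \RJ[E_2]\,u, \RY[T] w\rangle = \langle \RJ[E_2]\,\RX[T] u, w\rangle$ with closedness of $\ker \RY[T]$ in $\RY[E_2]$, denseness of $\Rj[E_2]\,\RH[E_2]$ in $\RY[E_2]$, and the denseness statements of \autoref{cor:kerimadense} (applicable since the right inverse $B$ is automatically a generalized inverse) should propagate the triviality to all of $\ker \RY[T]$. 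Once this is settled the symmetric closed-range argument delivers invertibility of $\RX[T]$, completing $(3) \Rightarrow (1)$, while the formulas in \eqref{eq:pseudoinverseformulasurjective} have already fallen out of the $(1) \Leftrightarrow (2)$ analysis.
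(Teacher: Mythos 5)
Your treatment of $(1)\Leftrightarrow(2)$ and of $(1)\Rightarrow(3)$ is correct and essentially matches the paper (the paper routes the second direction as $(2)\Rightarrow(3)$ by exhibiting $(A\pinv)\adj\,A\pinv$ as a generalized inverse of $A\,A\adj$, whereas you verify directly that it is a two-sided inverse; your computation is valid, slightly cleaner, and also delivers the second formula in \eqref{eq:pseudoinverseformulasurjective}). The easy halves of $(3)\Rightarrow(1)$ --- injectivity of $\RX[(A\,A\adj)]$, invertibility of $\RH[(A\,A\adj)]$, and surjectivity of $\RY[(A\,A\adj)]$ from density plus closed range --- are also fine.

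The gap sits exactly where you hedge: the claim $\ker \RY[(A\,A\adj)]=\{0\}$ (equivalently, via the conjugacy $\RK[E_2]\,\RY[(A\,A\adj)] = \RX[(A\,A\adj)][][\dual]\,\RK[E_2]$, the density of $\ima \RX[(A\,A\adj)]$). The ingredients you invoke do not close it. First, a closed subspace of a Banach space can meet a dense subspace trivially without being zero, so ``$\ker\RY[(A\,A\adj)]$ is closed and intersects $\Rj[E_2](\RH[E_2])$ trivially'' proves nothing on its own. Second, \autoref{cor:kerimadense} applies to $A$ (a right inverse is indeed a generalized inverse), hence to $\ker(A)$ and $\ima(A)$, but \emph{not} to $\ker(A\,A\adj)$ or $\ima(A\,A\adj)$; for those one would need a generalized inverse of $A\,A\adj$, which is precisely what is being constructed. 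Third, the pairing identity $\ninnerprod{\RJ[E_2]\,u,\RY[(A\,A\adj)]\,w}=\ninnerprod{\RJ[E_2]\,\RX[(A\,A\adj)]\,u,w}$ only restates that any $w\in\ker\RY[(A\,A\adj)]$ annihilates $\RJ[E_2](\ima\RX[(A\,A\adj)])$, i.e.\ it reproduces the very equivalence ``kernel trivial iff image dense'' that you already extracted from the closed range theorem --- the circle is not broken. Concretely, with $N=\ker\RX[A]$ and $M=\ima\RX[A][][\adj]$, what must be shown is that the closed direct sum $M\oplus N$ (closedness is hypothesis (3), trivial intersection is the easy half) exhausts $\RX[E_1]$; density of $\Ri[E_1](M\oplus N)$ in $\RH[E_1]$ and surjectivity of $\RY[(A\,A\adj)]$ are dual reformulations of facts already proved and do not yield this. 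Be aware that the paper's own proof is equally terse at this point (it attributes ``trivial kernels and dense images'' of both $\RX[(A\,A\adj)]$ and $\RY[(A\,A\adj)]$ to \autoref{lem:splitsubspacedense}, which likewise furnishes only the easy halves), so you have correctly isolated the real difficulty of the lemma --- but ``should propagate'' is not an argument, and as written this implication is not established.
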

\begin{proof}
``\ref{item:AAstarinvertible} $\Rightarrow$ \ref{item:Apinvexists}'': 
If $A\,A\adj$ is invertible, the first formula in \eqref{eq:pseudoinverseformulasurjective} defines a continuous linear operator and it is readily checked that this operator is a pseudoinverse of $A$.
\newline
``\ref{item:Apinvexists} $\Rightarrow$ \ref{item:AAstarclosedrange}'':
As operators with generalized inverse have closed range, it suffices to verify that $(A\pinv)\adj A\pinv$ provides a generalized inverse of $A\,A\adj$. This is done in the following computations:
\begin{align*}
	(A\,A\adj) \, ((A\pinv)\adj \, A\pinv) \,(A\,A\adj)
	&= 
	A\,(A\pinv\,A)\adj \,(A\pinv A) \,A\adj
	= 
	A\,(A\pinv\,A) \,(A\pinv A) \,A\adj
	= A\,A\adj,
	\\
	((A\pinv)\adj\, A\pinv) \, (A\,A\adj)	\, ((A\pinv)\adj\, A\pinv)
	&=
	(A\pinv)\adj\, (A\pinv \, A) \, (A\pinv\, A)\adj \,A\pinv
	=
	(A\pinv)\adj\, (A\pinv \, A) \, (A\pinv\, A) \,A\pinv
	=
	(A\pinv)\adj \, A\pinv.
\end{align*}
``\ref{item:AAstarclosedrange} $\Rightarrow$ \ref{item:AAstarinvertible}'': 
As $\RH[A]$ is a surjective, $\RH[A]\,\RH[A][][\adj]$ is continuously invertible. Now, \autoref{lem:splitsubspacedense} implies that both $\RX[A]\,\RX[A][][\adj]$ and $\RY[A]\,\RY[A][][\adj]$ have trivial kernels and dense images.
By the open mapping theorem, $\RX[A]\,\RX[A][][\adj]$ and $\RY[A]\,\RY[A][][\adj]$ are invertible if and only of their ranges are closed.
\end{proof}

We have an analogous result for injective operators.
As the computations for its verification are quite similar, we skip its proof.

\begin{lemma}\label{lem:pseudoinverseofinjection}
Let $E_1$ and $E_2$ be reflexive 
\UnnamedSpaces,\invisible{\autoref{prop:pseudoinverse}} 
let $B \in \RMor(E_2;E_1)$ be a morphism with left inverse $A \in \RMor(E_1;E_2)$.
Then the following statements are equivalent:
\begin{enumerate}
	\item $B\adj\,B$ is continuously invertible.\label{item:BstarBinvertible}	
	\item $B$ admits a pseudoinverse.\label{item:Bpinvexists}	
	\item $B\adj\,B$ has closed range.\label{item:BstarBclosedrange}
\end{enumerate}
Moreover, if these conditiond are satisfied, we have the identities
\begin{align}
	B\pinv = (B\adj\,B)^{-1} B\adj,
	\qand	
	(B\adj\,B)^{-1} = B\pinv \, (B\pinv)\adj .	
	\label{eq:pseudoinverseformulainjective}
\end{align}
\end{lemma}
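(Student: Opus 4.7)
I would follow the three-step scheme of \autoref{lem:pseudoinverseofsurjection}, with the roles of $A$ and $B$, and of $A\,A\adj$ and $B\adj\,B$, interchanged.

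For \ref{item:BstarBinvertible} $\Rightarrow$ \ref{item:Bpinvexists}, I would set $C \ceq (B\adj\,B)^{-1}\, B\adj$ and verify that $C$ is the pseudoinverse of $B$. The identity $C\,B = (B\adj\,B)^{-1}(B\adj\,B) = \id_{E_2}$ makes both generalized-inverse equations $B\,C\,B = B$ and $C\,B\,C = C$ immediate, and renders $C\,B$ trivially self-adjoint. The remaining identity $(B\,C)\adj = B\,C$ reduces, after using $(B\,C)\adj = B\,((B\adj\,B)^{-1})\adj\,B\adj$, to the self-adjointness of $(B\adj\,B)^{-1}$, which in turn follows from that of $B\adj\,B$. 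This establishes the first formula in \eqref{eq:pseudoinverseformulainjective}.

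For \ref{item:Bpinvexists} $\Rightarrow$ \ref{item:BstarBclosedrange}, I would use that every \UnnamedMorphism admitting a generalized inverse has closed range on each of the three scales, so that it suffices to exhibit a generalized inverse of $B\adj\,B$. The symmetric candidate $B\pinv\,(B\pinv)\adj$ works: a direct regrouping, invoking $(B\,B\pinv)\adj = B\,B\pinv$ together with the generalized-inverse relations $B\,B\pinv\,B = B$ and $B\pinv\,B\,B\pinv = B\pinv$, yields
\begin{align*}
    (B\adj\,B)\,(B\pinv\,(B\pinv)\adj)\,(B\adj\,B)
    &= B\adj\,(B\,B\pinv)\,(B\,B\pinv)\adj\,B
    = B\adj\,(B\,B\pinv)\,(B\,B\pinv)\,B\\
    &= B\adj\,(B\,B\pinv)\,B = B\adj\,B,
\end{align*}
and analogously $(B\pinv\,(B\pinv)\adj)\,(B\adj\,B)\,(B\pinv\,(B\pinv)\adj) = B\pinv\,(B\pinv)\adj$. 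This also yields the second formula in \eqref{eq:pseudoinverseformulainjective}.

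For \ref{item:BstarBclosedrange} $\Rightarrow$ \ref{item:BstarBinvertible}, the existence of the left inverse $A$ forces $\RH[B]$ to be injective with closed range, so that the Hilbert-space operator $\RH[B\adj\,B] = \RH[B]\adj\,\RH[B]$ is continuously invertible. Naturality of the inclusions $\Ri[E_2]$ and $\Rj[E_2]$ together with their injectivity then propagates triviality of the kernel to $\RX[B\adj\,B]$ and $\RY[B\adj\,B]$, while \autoref{lem:splitsubspacedense}, applied exactly as in the surjective proof, yields density of their images. Combined with the closed-range hypothesis, the open mapping theorem upgrades this to continuous invertibility on the outer scales. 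The principal difficulty lies in this last implication, where invertibility must be transferred between the Hilbert and Banach scales by the density mechanism; the first two steps are essentially mirror rewrites of the surjective calculations.
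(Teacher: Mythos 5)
Your proposal is correct and takes essentially the same approach as the paper: the paper deliberately omits this proof, remarking only that the computations mirror those of \autoref{lem:pseudoinverseofsurjection}, and your three implications (the explicit candidate $(B\adj\,B)^{-1}B\adj$, the generalized inverse $B\pinv\,(B\pinv)\adj$ of $B\adj\,B$, and the transfer of invertibility from the Hilbert scale via \autoref{lem:splitsubspacedense} and the open mapping theorem) are exactly that mirror, with the algebra checking out.
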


\subsection{Subspace Theorem}

For the discussion of equality constraints, it is important to decide whether the kernel
of a \UnnamedMorphism $A$ is again a \UnnamedSpace. The following theorem assures us that this is the case whenever $A$ is surjective (which is a standard constraint qualification) and admits a pseudoinverse.

\begin{theorem}\label{theo:SubspaceTheorem}
Let $E_1$ and $E_2$ be reflexive \UnnamedSpaces and let $A \in \RMor(E_1;E_2)$ be a surjective morphism. Define $E_0\ceq (\ker(\RX[A]),\ker(\RH[A]),\ker(\RY[A]))$ and denote the canonical injections by $\Ri[E_0] \colon \RX[E_0] \hookrightarrow \RH[E_0]$ and $\Rj[E_0] \colon \RH[E_0] \hookrightarrow \RY[E_0]$. Moreover, define
\begin{align}
	\arraycolsep=1.4pt
	\begin{array}{rclcrcll}
		\RI[E_0] \colon \RH[E_0] &\to &\RH[E_0][][\dual],
		& \quad &\ninnerprod{\RI[E_0]\, v_1, v_2} &\ceq &\ninnerprod{\RI[E_1]\, v_1, v_2}
		& \quad \text{for $v_1$, $v_2 \in \RH[E_0]$},
		\\
		\RJ[E_0] \colon \RX[E_0] &\to &\RY[E_0][][\dual],
		& \quad &\ninnerprod{\RJ[E_0]\, u, w} &\ceq &\ninnerprod{\RJ[E_1]\, u, w}
		& \quad \text{for $u \in \RX[E_0]$, $w \in \RY[E_0]$}.
	\end{array}
	\label{eq:SubspaceMetrics}
\end{align}
Then the following statements are equivalent:
\begin{enumerate}
	\item $A$ admits a pseudoinverse.\label{item:subspace_pinvexist}
	\item The mapping
	\label{item:subspace_saddlepointsystem}
	$
		\begin{pmatrix}
			\RJ[E_1] & \RY[A][][\dual]\\
			\RX[A] & 0
		\end{pmatrix}
		\colon \RX[E_1] \oplus \RY[E_2][][\dual]
		\to \RY[E_1][][\dual] \oplus \RX[E_2]		
	$ is continuously invertible.
	\item The mapping
	\label{item:subspace_saddlepointsystem2}
	$
		\begin{pmatrix}
			\RK[E_1] & \RX[A][][\dual]\\
			\RY[A][] & 0
		\end{pmatrix}
		\colon \RY[E_1] \oplus \RX[E_2][][\dual]
		\to \RX[E_1][][\dual] \oplus \RY[E_2]
	$ is continuously invertible.	

\end{enumerate}
In any of these cases, $(E_0,\RI[E_0],\RJ[E_0])$ is a \UnnamedSpace.
\end{theorem}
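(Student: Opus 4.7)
The plan is to establish the three equivalences and then to deduce the para-Hilbert structure on $E_0$. I would organise the argument around the Schur complements of the block matrices in (2) and (3), together with the characterisation of pseudoinverse existence provided by \autoref{lem:pseudoinverseofsurjection}. For (2) $\Leftrightarrow$ (3), I observe that, under the canonical identifications coming from reflexivity of $E_1,E_2$ and the identity $\RK[E] = \RJ[E]\dual\,\RPsi[\RY[E]]$ of \autoref{prop:Koperator}, the block operator in (3) is (up to these identifications) the Banach-space dual of the one in (2). Since invertibility is preserved under taking duals between reflexive Banach spaces, the two conditions coincide.

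For (1) $\Leftrightarrow$ (2), the key step is a Schur complement computation. Since $\RJ[E_1]$ is an isomorphism, the block matrix in (2) is invertible iff $\RX[A]\,\RJ[E_1]^{-1}\,\RY[A][][\dual] \colon \RY[E_2]\dual \to \RX[E_2]$ is. The defining identity $\RX[A][][\adj] = \RJ[E_1]^{-1}\,\RY[A][][\dual]\,\RJ[E_2]$ from \autoref{prop:adjXA} rewrites this Schur complement as $\RX[A]\,\RX[A][][\adj]\,\RJ[E_2]^{-1}$, so (2) is equivalent to $\RX[A]\,\RX[A][][\adj]$ being continuously invertible. The analogous argument shows (3) to be equivalent to $\RY[A]\,\RY[A][][\adj]$ being continuously invertible. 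Since $\RH[A]$ is surjective, $\RH[A]\,\RH[A][][\adj]$ is automatically invertible by the standard Hilbert-space result. The conjunction of (2) and (3) thus makes all three components of $A\,A\adj$ continuously invertible, and applying the morphism identities on both sides by these component-wise inverses shows that the inverses assemble into a \UnnamedMorphism, namely the para-Hilbert inverse of $A\,A\adj$. By \autoref{lem:pseudoinverseofsurjection}, this is equivalent to the existence of $A\pinv$; conversely, the same lemma immediately yields invertibility of $\RX[A]\,\RX[A][][\adj]$ whenever $A\pinv$ exists, closing the loop.

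It remains to check that $E_0$ is a \UnnamedSpace under these conditions. Its underlying spaces are closed subspaces of those of $E_1$ and hence Banach/Hilbert; the restricted inclusions $\Ri[E_0]$ and $\Rj[E_0]$ are continuous and, by \autoref{cor:kerimadense} applied to $A$ and $A\pinv$, also dense. The restriction of the $\RH[E_1]$-inner product to $\RH[E_0]$ has $\RI[E_0]$ as its Riesz isomorphism, and the commutative square \eqref{eq:Rieszsquarediagram} for $E_0$ is simply the restriction of the one for $E_1$. The substantial axiom to check is that $\RJ[E_0] \colon \RX[E_0] \to \RY[E_0]\dual$ is an isomorphism. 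Injectivity is automatic: if $\RJ[E_0]\, u = 0$, then $w \ceq \Rj[E_1]\,\Ri[E_1]\, u$ lies in $\RY[E_0]$ by the commutativity of the morphism diagram, and testing gives $\nnorm{\Ri[E_1]\, u}_{\RH[E_1]}^2 = \ninnerprod{\RJ[E_0]\, u, w} = 0$, forcing $u = 0$. For surjectivity, given $\ell \in \RY[E_0]\dual$, I would extend it by Hahn-Banach to $\tilde\ell \in \RY[E_1]\dual$ and set $u_0 \ceq (\id - \RX[(A\pinv A)])\,\RJ[E_1]^{-1}\,\tilde\ell$, which lies in $\RX[E_0]$ because $A\,A\pinv\, A = A$. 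The self-adjointness $(A\pinv A)\adj = A\pinv A$ translates into $\RJ[E_1]\,\RX[(A\pinv A)] = \RY[(A\pinv A)][][\dual]\,\RJ[E_1]$, and since $\RY[(A\pinv A)]\, w = 0$ for $w \in \RY[E_0]$, testing against such $w$ yields $\RJ[E_0]\, u_0 = \ell$. The open mapping theorem then promotes this continuous bijection to an isomorphism.

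The main obstacle I anticipate is not any single hard estimate, but the book-keeping across the three scales $\RX,\RH,\RY$ and the two spaces $E_1,E_2$, ensuring that every intermediate operator is a \UnnamedMorphism rather than merely a tuple of linear maps. Once the Schur complement of the saddle-point matrix has been identified with $\RX[A]\,\RX[A][][\adj]$ and the self-adjointness of $A\pinv\, A$ is used to transfer test functions between the $\RX$- and $\RY$-scales, the remaining manipulations are mechanical.
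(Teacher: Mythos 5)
Your proposal is correct and follows essentially the same route as the paper: the saddle-point matrices are reduced via their Schur complements to the invertibility of $\RX[(A\,A\adj)]$ and $\RY[(A\,A\adj)]$, the equivalence of (2) and (3) comes from dualizing with the identifications of \autoref{prop:Koperator}, the link to (1) is \autoref{lem:pseudoinverseofsurjection}, and denseness of the inclusions in $E_0$ is \autoref{cor:kerimadense}. Your surjectivity argument for $\RJ[E_0]$ via the explicit formula $u_0 = (\id - \RX[(A\pinv A)])\,\RJ[E_1]^{-1}\tilde\ell$ is the same computation the paper performs by solving the saddle-point system with right-hand side $(\tilde\eta,0)$.
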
 
\begin{proof}
``\ref{item:subspace_pinvexist} $\Leftrightarrow$ (\ref{item:subspace_saddlepointsystem} and \ref{item:subspace_saddlepointsystem2})'': Note that $\RJ[E_1]$ and $\RK[E_1]$ are continuously invertible. Hence, the saddle point matrices are invertible if and only if their Schur complements $R = - \RX[A] \, \RJ[E_1]^{-1} \, \RY[A][][\dual]$ and $S = - \RY[A] \, \RK[E_1]^{-1} \, \RX[A][][\dual]$ are invertible. Now observe that 
$R	 \, \RJ[E_2] 
= -\RX[(A\,A\adj)]$ 
and 
$S \, \RK[E_2] 
= -\RY[(A\,A\adj)]$. By \autoref{lem:pseudoinverseofsurjection}, these Schur complements are continuously invertible if and only if $A$ admits a pseudoinverse.
\newline
``\ref{item:subspace_saddlepointsystem} $\Leftrightarrow$ \ref{item:subspace_saddlepointsystem2}'': Observe with \autoref{prop:Koperator} that 
\begin{align*}
	\begin{pmatrix}
		\RJ[E_1] & \RY[A][][\dual]\\
		\RX[A][] & 0
	\end{pmatrix}\dual
	=
	\begin{pmatrix}
		\id_{} & 0\\
		0 & \RPsi[\RY[E_2]]
	\end{pmatrix}	
	\begin{pmatrix}
		\RK[E_1] & \RX[A][][\dual]\\
		\RY[A] & 0
	\end{pmatrix}
	\begin{pmatrix}
		\RPsi[\RY[E_1]]^{-1} & 0\\
		0 & \id_{}
	\end{pmatrix}.
\end{align*}
Hence either of the saddle point matrices is continuously invertible if and only if the other one is.
\newline
So far, we have shown that statements \ref{item:subspace_pinvexist}, \ref{item:subspace_saddlepointsystem}, and \ref{item:subspace_saddlepointsystem2} are pairwise equivalent. Now, suppose that any one of them (hence all of them) hold true.
The injections $\Ri[E_0]$ and $\Rj[E_0]$ are continuous and by \autoref{cor:kerimadense}, they are also dense. The identity $\RI[E_0]\,\Ri[E_0] = \Rj[E_0]\dual\,\RJ[E_0]$ holds by construction. Thus it suffices to show that $\RJ[E_0]$ is an isomorphism.
Since $\RJ[E_0]$ is injective, we only have to show that it is also surjective. Fix an arbitrary $\eta \in \RY[E_0][][\dual]$.
Denote by $\RZ[C] \colon \RZ[E_0] \hookrightarrow \RZ[E_1]$, $Z \in \{\RX,\RH,\RY\}$ the canonical injections and by 
$P \ceq \id_{E_1} - A\pinv A \in \RMor(E_1;E_1)$ the orthoprojector onto $E_0$
so that we have $\RY[P] \, \RY[C] = \RY[C]$.
For given $\eta \in \RY[E_0][][\dual]$ choose an extension $\tilde \eta \in \RY[E_1][][\dual]$ of $\eta$ (i.e., we have $\RY[C][][\dual] \, \tilde \eta = \eta$; the  Hahn-Banach theorem guarantees existence) and solve the
saddle point system
\begin{align*}
	\begin{pmatrix}
		\RJ[E_1] & \RY[A][][\dual]\\
		\RX[A] & 0
	\end{pmatrix}
	\begin{pmatrix}
		\tilde u\\
		\lambda
	\end{pmatrix}		
	=
	\begin{pmatrix}
		\tilde \eta \\
		0
	\end{pmatrix}
	\quad
	\text{with $\tilde u \in \RX[E_1]$ and $\lambda \in \RY[E_2][][\dual]$.}
\end{align*}
Note that we have $\RX[A]\,\tilde u = 0$, hence there is a $u \in \RX[E_0]$ with $\RX[C]\, u = \tilde u$. Now, we obtain
\begin{align*}
	\eta 
	&= \RY[C][][\dual] \, \tilde \eta 
	= \RY[C][][\dual] \, \bigparen{\RJ[E_1] \, \tilde u + \RY[A][][\dual] \, \lambda} 	
	= \RY[C][][\dual] \, \RJ[E_1] \, \RX[C] \, u 
	   + \RY[C][][\dual] \, \RY[P][][\dual]\, \RY[A][][\dual] \,  \lambda
	\\
	&= \RY[C][][\dual] \; \RJ[E_1] \, \RX[C] \,u
	= \RJ[E_0] \, u.
\end{align*}
Hence $\RJ[E_0]$ is an isomorphism.
\end{proof}

\begin{remark}\label{rem:saddlepointmatrixisuseful}
The saddle point matrix $\begin{pmatrix}
		\RJ[E_1] & \RY[A][][\dual]\\
		\RX[A] & 0
	\end{pmatrix}$
is of significant practical importance: Let $u_1 \in \RX[E_2]$ and $u_2 \in \RX[E_2]$. Then the orthogonal projection $u$ of $u_1$ onto $\ker(\RX[A])$ and the pseudoinverse $\RX[A][][\pinv] \, u_2$ of $u_2$ satisfy
\begin{align}
	\begin{pmatrix}
		\RJ[E_1] & \RY[A][][\dual]\\
		\RX[A] & 0
	\end{pmatrix} \, 
	\begin{pmatrix}
		u \\ \lambda
	\end{pmatrix}
	= 
	\begin{pmatrix}
		\RJ[E_2] \, u_1 \\ 0
	\end{pmatrix}
	\qand
	\begin{pmatrix}
		\RJ[E_1] & \RY[A][][\dual]\\
		\RX[A] & 0
	\end{pmatrix} \, 
	\begin{pmatrix}
		\RX[A][][\pinv] \, u_2 \\ \mu
	\end{pmatrix}
	= 
	\begin{pmatrix}
		0 \\ u_2
	\end{pmatrix}
	\label{eq:PSaddlepointSystem}
\end{align}
with suitable Lagrange multipliers $\lambda$, $\mu \in \RY[E_2][][\dual]$.
\end{remark}

\subsection{Characterization of Pseudoinvertible Operators}

Having realized the importance of surjective morphisms with pseudoinverse, we need a criterion which enables us to verify that a given morphism admits a pseudoinverse. Sometimes, a right-inverse can be directly constructed and can be easier analyzed than the morphism itself.

\begin{theorem}\label{lem:ApinvBpinv}
Let $E_1$ and $E_2$ be reflexive \UnnamedSpaces, 
let $A \in \RMor(E_1;E_2)$ be a morphism and let $B \in \RMor(E_2;E_2)$ be a right inverse of $A$.
Then $A$ admits a pseudoinverse if and only if $B$ does.
\end{theorem}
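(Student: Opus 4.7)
\textbf{Plan.} I would reduce both existence statements to invertibility of certain self-adjoint operators via \autoref{lem:pseudoinverseofsurjection} and \autoref{lem:pseudoinverseofinjection}, derive a single algebraic identity linking these operators, and finally propagate continuous invertibility across the three scales $\RX, \RH, \RY$.

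Since $A\, B = \id_{E_2}$, the morphism $A$ is surjective with right inverse $B$, and $B$ is injective with left inverse $A$. By \autoref{lem:pseudoinverseofsurjection}, $A$ admits a pseudoinverse iff $A\, A\adj$ is continuously invertible; by \autoref{lem:pseudoinverseofinjection}, $B$ admits a pseudoinverse iff $B\adj\, B$ is continuously invertible. So the theorem reduces to the equivalence of these two invertibility conditions. Assume first that $A\pinv$ exists and put $C \ceq B - A\pinv \in \RMor(E_2;E_1)$. Since both $B$ and $A\pinv$ are right inverses of $A$, one has $A\, C = 0$, whence $\ima C \subseteq \ker A$. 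On the Hilbert scale, $\ker \RH[(A\pinv)\adj] = (\ima \RH[A\pinv])^\perp = \ker \RH[A] \supseteq \ima \RH[C]$, so $\RH[(A\pinv)\adj]\,\RH[C] = 0$; by the identity lemma (\autoref{lem:nulllemma}) this lifts to $(A\pinv)\adj\, C = 0$ as a morphism, and by adjunction also $C\adj\, A\pinv = 0$. Expanding $B\adj B = ((A\pinv)\adj + C\adj)\,(A\pinv + C)$ and invoking $(A\pinv)\adj\, A\pinv = (A\, A\adj)^{-1}$ from \eqref{eq:pseudoinverseformulasurjective}, one obtains the central identity
\begin{align*}
  B\adj\, B = (A\, A\adj)^{-1} + C\adj\, C.
\end{align*}
The converse implication is symmetric: if $B\pinv$ exists then $D \ceq A - B\pinv$ satisfies $D\, B = 0$ (both $A$ and $B\pinv$ are left inverses of $B$) and one derives $A\, A\adj = (B\adj\, B)^{-1} + D\, D\adj$ by the analogous manipulations.

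The remaining task is to pass from continuous invertibility of $A\, A\adj$ to that of $B\adj B$ (and vice versa). On the Hilbert scale this is immediate, since $\RH[C\adj C]$ is self-adjoint non-negative and $\RH[(A\, A\adj)^{-1}]$ is self-adjoint positive definite. For the Banach scales I would employ the Woodbury identity
\begin{align*}
  (B\adj B)^{-1} = A\, A\adj - (A\, A\adj)\, C\adj\, \bigl(\id_{E_1} + C\, (A\, A\adj)\, C\adj\bigr)^{-1}\, C\, (A\, A\adj),
\end{align*}
which realises $(B\adj B)^{-1}$ as a composition of morphisms as soon as $\id_{E_1} + C\,(A\, A\adj)\,C\adj$ is continuously invertible. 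The \textbf{main obstacle} is establishing this invertibility on $\RX[E_1]$ and $\RY[E_1]$: on $\RH[E_1]$ the operator is a non-negative perturbation of the identity and hence has spectrum in $[1,\infty)$, but positivity arguments are unavailable on the outer Banach scales. I would circumvent this by combining the coercivity of $\RH[C\,(A\, A\adj)\,C\adj]$ (whose image lies in $\ker \RH[A]$ by $A\, C = 0$) with the density of the embeddings $\Ri$, $\Rj$ and the reflexivity assumption, transferring the bounded Hilbert-scale inverse to a morphism via a spectral or Neumann-series argument within the category of \UnnamedSpaces.
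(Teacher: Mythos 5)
Your reduction to the invertibility of $A\,A\adj$ and of $B\adj B$ via \autoref{lem:pseudoinverseofsurjection} and \autoref{lem:pseudoinverseofinjection} is exactly right, and the identity $B\adj B = (A\,A\adj)^{-1} + C\adj C$ with $C = B - A\pinv$ is correct; it is in fact the additive form of the paper's own decomposition, since with $P = A\pinv A$ one has $(1-P)\,B = C$, $B\adj\, P\, B = (A\,A\adj)^{-1}$, and $B\adj\,(1-P)\,B = C\adj C$. The genuine gap is the step you yourself flag as the main obstacle: passing from invertibility of $\id_{E_1} + C\,(A\,A\adj)\,C\adj$ (equivalently, of $B\adj B$) on the Hilbert scale to invertibility on $\RX[E_1]$ and $\RY[E_1]$. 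None of the three tools you propose can close it. A spectral/positivity argument is a Hilbert-space device and is unavailable on the outer Banach scales; a Neumann series requires the perturbation to have operator norm below one, which nothing guarantees; and the density of the embeddings $\Ri$ and $\Rj$ (i.e.\ \autoref{cor:kerimadense}) transfers injectivity and denseness of ranges, but never surjectivity or closedness of ranges --- indeed the whole content of condition \ref{item:AAstarclosedrange} in \autoref{lem:pseudoinverseofsurjection} is that closed range on the $\RX$- and $\RY$-scales is a genuine extra hypothesis beyond invertibility on $\RH$. As written, the perturbation strategy therefore does not terminate.

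The paper closes exactly this step by a purely algebraic device that needs no positivity at all: injectivity of $B\adj B$ on all three scales follows from injectivity of $\RH[B]$ together with \autoref{cor:kerimadense}, and surjectivity follows from the chain $\ima(B\adj B) \supseteq B\adj\bigparen{\ima(P\,B)} = \ima\bigparen{B\adj A\adj (A\,A\adj)^{-1} A\, B} = \ima\bigparen{(A\,A\adj)^{-1}} = E_2$, using $B\adj A\adj = (A\,B)\adj = \id_{E_2}$; the open mapping theorem then upgrades bijectivity to continuous invertibility on each scale simultaneously. I would recommend replacing your Woodbury step by this image computation --- your identity already contains every ingredient it needs --- and arguing the converse direction symmetrically with $Q = B\,B\pinv$ and $A\,Q\,A\adj = (B\adj B)^{-1}$, as the paper does.
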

\begin{proof}
``$\Rightarrow$'': Suppose that $A\pinv$ exists. We are going to show that $B\adj B$ is invertible so that we have $B\pinv = (B\adj B)^{-1} B\adj$ (see \autoref{lem:pseudoinverseofinjection}).
As $\RH[B]$ is injective, $\RH[B][][\adj]\,\RH[B]$ must have trivial kernel and \autoref{cor:kerimadense} shows that $\RX[B][][\adj]\,\RX[B]$ and $\RY[B][][\adj]\,\RY[B]$ are also injective.
In order to show surjectivity, we define the projector $P \ceq A\pinv \, A$. By \eqref{eq:pseudoinverseformulasurjective}, we may write $P = A\adj\,(A\,A\adj)^{-1} A$.
With
$\ima(B) = \ima(P\,B) \oplus \ima((1-P)\,B)$
and $B^* A^* = (AB)^* = \id_{E_2}$, we obtain
\begin{align*}
	\ima(B\adj B) \supset B\adj \,(\ima(P\,B)) = \ima(B\adj \, P \, B) 
	= \ima (B^* A^* (A\,A^*)^{-1} A \, B)	
	= \ima((A\,A\adj )^{-1}) = E_2.
\end{align*}
Hence $B\adj B$ is surjective. By the open mapping theorem, $B\adj B$ is invertible.
\newline
``$\Leftarrow$'': Suppose that $B\pinv$ exists.
Observe that $\RH[(AA^*)] = \RH[A]\, \RH[A][][\adj]$ is invertible since $\RH[A]$ is surjective.
Hence \autoref{cor:kerimadense} implies that $AA\adj$ is injective.
Since $B\pinv$ exists, we may define the projector $Q \ceq B \, B\pinv$ and \eqref{eq:pseudoinverseformulainjective} tells us  $Q = B\,(B\adj B)^{-1} B\adj$.
Utilizing 
$\ima(A\adj) = \ima(Q\,A\adj) \oplus \ima((1-Q)\,A\adj)$
leads to
\begin{align*}
	\ima(A \, A\adj) 
	\supset A \,(\ima(Q\,A\adj)) 
	= \ima (A \, Q \, A\adj ) 
	= \ima( A \, B\,(B\adj B)^{-1} B\adj \, A\adj )
	= \ima((B\adj B)^{-1}) = E_2,
\end{align*}
which shows that $A \, A\adj$ is surjective. The open mapping theorem shows that $A \, A\adj$ is invertible and we obtain $A\pinv = A\adj \, (A \, A\adj)^{-1}$
\end{proof}

\begin{corollary}\label{cor:ApinvBstarBFredholm}
Let $E_1$ and $E_2$ be reflexive \UnnamedSpaces, 
let $A \in \RMor(E_1;E_2)$ be a surjective morphism.
Then $A$ admits a pseudoinverse if and only if it has a right inverse $B \in \RMor(E_2;E_2)$ such that $BB\adj$ is a Fredholm operator.
In particular, this is fulfilled whenever $E_2$ is finite-dimensional. 
\end{corollary}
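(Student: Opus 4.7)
The plan is to read off the claim as a direct consequence of \autoref{lem:ApinvBpinv} (which reduces pseudoinvertibility of $A$ to that of any right inverse $B$) together with \autoref{lem:pseudoinverseofinjection} (which characterises pseudoinvertibility of the injection $B$ by closed range of $B\adj B$), using the standard fact that Fredholm operators have closed range at each of the scales $\RX$, $\RH$, $\RY$.

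For the ``only if'' direction I would take $B \ceq A\pinv$. Since $A$ is surjective, $A\,A\pinv = \id_{E_2}$, so $B$ is a right inverse of $A$. The second identity in \eqref{eq:pseudoinverseformulasurjective} then reads $B\adj\,B = (A\pinv)\adj\,A\pinv = (A\,A\adj)^{-1}$, an isomorphism of \UnnamedSpaces and thus trivially Fredholm on every scale. Conversely, if $B$ is a right inverse with $B\adj B$ Fredholm, the identity $A\,B=\id_{E_2}$ forces $B$ to be an injection having $A$ as a left inverse, placing us in the hypotheses of \autoref{lem:pseudoinverseofinjection}. Fredholmness of $B\adj B$ provides closed range at every scale, so that lemma produces $B\pinv$, and then \autoref{lem:ApinvBpinv} lifts pseudoinvertibility from $B$ back to $A$.

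The finite-dimensional supplement is immediate: whenever $E_2$ is finite-dimensional, any surjective morphism $A$ admits a right inverse $B$ (lift a basis of $E_2$ and extend linearly, noting that on a finite-dimensional $E_2$ the three scales coincide topologically so continuity is automatic), and $B\adj\,B$ is then an endomorphism of the finite-dimensional space $E_2$, hence a fortiori Fredholm on every scale.

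I do not anticipate any genuine obstacle here: all the nontrivial work has already been done in \autoref{lem:ApinvBpinv} and \autoref{lem:pseudoinverseofinjection}, and the corollary merely repackages those results in a form convenient for applications where a right inverse can be exhibited more easily than the pseudoinverse itself. The only care point is that ``Fredholm'' must be parsed scale-wise -- consistently with how surjectivity, closed range and invertibility have been treated throughout \autoref{sec:RieszStructures} -- so that the closed-range conclusion needed by \autoref{lem:pseudoinverseofinjection} is automatic at $\RX$, $\RH$, and $\RY$ simultaneously.
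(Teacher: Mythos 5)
Your proposal is correct and follows exactly the route the paper intends: the corollary is stated without its own proof precisely because it is the immediate combination of \autoref{lem:ApinvBpinv} (pseudoinvertibility transfers between $A$ and any right inverse $B$) with \autoref{lem:pseudoinverseofinjection} (pseudoinvertibility of the injection $B$ is equivalent to closed range of $B\adj B$, which Fredholmness supplies scale-wise), plus the observation that $B\adj\,B=(A\,A\adj)^{-1}$ for the choice $B=A\pinv$ in the forward direction. Note only that you have silently (and correctly) read the statement's ``$B\,B\adj$'' as ``$B\adj\,B$'' and its ``$B\in\RMor(E_2;E_2)$'' as ``$B\in\RMor(E_2;E_1)$''; both are typos in the corollary as printed, since otherwise the domains do not match and the finite-dimensional clause would fail (a finite-rank endomorphism of an infinite-dimensional $E_1$ is not Fredholm).
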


\clearpage

\section{\CUnnamedManifolds}\label{sec:UnnamedManifolds}

Generalizing \UnnamedSpaces to a nonlinear setting necessitates the use of vector bundles\footnote{We use the terms \emph{vector bundle} and \emph{continuous, locally trivial vector bundle} synonymously.}, in particular of Banach bundles, i.e., vector bundles whose fibers are Banach spaces. 
In a nutshell, vector bundles are continuous families of topological vector spaces parameterized over a further topological space, the so-called base space. 
For a brief introduction to vector bundles, we refer the reader to \cite{MR1335233}.

\subsection{Basic Definitions}

\begin{definition}\label{dfn:Rieszbundlestructures}
Let $R \in\set{C^{k,\alpha}_\loc| 
\text{$k\in \N\cup\{0,\infty\}$, $\alpha \in \intervalcc{0,1}$}}$.
Let $\RM$ be a Banach manifold of class $R$.
A \emph{\UnnamedBundle} $\pi \colon E \to \RM$ over $\RM$ consists of
\begin{enumerate}
\item two Banach bundles $\pi_{\RX[E]} \colon \RX[E] \to \RM$, $\pi_{\RY[E]} \colon \RY[E] \to M$ and of a Hilbert bundle $\pi_{\RH[E]} \colon \RH[E] \to M$;
\item a chain of continuous, linear, and fiberwise dense bundle injections
\begin{equation*}
\begin{tikzcd}[]
	\RX[E]
		\ar[r,hook, two heads,"{\Ri[E]}"] 
	&\RH[E]
		\ar[r,hook, two heads,"{\Rj[E]}"] 
	&
	\RY[E];
\end{tikzcd}
\end{equation*}
\item a fixed bundle inner product $\ninnerprod{\cdot,\cdot}_{\RH[E]}$ and its Riesz isomorphism $\RI[E] \colon \RH[E] \to \RH[E][][']$;
	\item and an isomorphism $\RJ[E] \colon \RX[E] \to \RY[E][][']$ of Banach bundles
\end{enumerate}
such that the following diagram commutes:
\begin{equation}
\begin{tikzcd}[]
	\RX[E]
		\ar[dr,"\pi_{\RX[E]}"']
		\ar[dd,hook, two heads,"{\Ri[E]}"'] 
		\ar[rr,"{\RJ[E]}","\cong"'] 
	&&\RY[E][][']
		\ar[dl,"\pi_{\RY[E][][\dual]}"]	
		\ar[dd,hook, two heads,"{\Rj[E]\dual}"] 
	\\
	&\RM
	\\
	\RH[E]
		\ar[ur,"{\pi_{\RH[E]}}"]		
		\ar[rr,"{\RI[E]}"',"\cong"]	
	&&\RieszH'
		\ar[ul,"\pi_{\RH[E][][\dual]}"']\nospaceperiod
\end{tikzcd}\label{eq:Rieszbundlesquarediagram}
\end{equation}
For a point $a \in M$, we write $\RX[E][a]$, $\RH[E][a]$, $\RY[E][a]$, $\RX[E][a][']$, \dots etc. for the fibers over $a$ of the according Banach bundles.
We say that $E$ is a \UnnamedBundle of class $R$, if 
\begin{enumerate}
	\item $\RX[E]$, $\RH[E]$ and $\RY[E]$ are Banach bundles of class $R$ and
	\item $\Ri[E]$, $\Rj[E]$, $\RI[E]$, and $\RJ[E]$ are Banach bundle morphisms of class $R$.	
\end{enumerate}
\invisible{\autoref{dfn:Rieszstructures}.}
Moreover, we refer to linear and continuous bundle chain maps (of class $R$) between \UnnamedBundles (of class $R$) as \UnnamedMorphisms (of class $R$) and to the space of all \UnnamedMorphisms between \UnnamedBundles $E_1$ and $E_2$ as $\RMor(E_1;E_2)$.
\end{definition}

\begin{definition}\label{dfn:Rieszmanifold}
Let $k \in \N \cup\{\infty\}$ and $\alpha \in \intervalcc{0,1}$.
A \emph{\UnnamedManifold of class $C^{k,\alpha}_{\on{loc}}$} is a Banach manifold $\RM$ of class $C^{k,\alpha}_{\on{loc}}$ together with a \UnnamedBundle $\pi \colon E \to \RM$ of class $C^{k-1,\alpha}_{\on{loc}}$ with $\T \RM = \RX[E]$. In this case we also write
$\RX[M]$, $\RH[M]$, $\RY[M]$, $\Ri[M]$, $\Rj[M]$, $\RI[M]$, $\RJ[M]$, \dots{} etc. for 
$\RX[E]$, $\RH[E]$, $\RY[E]$, $\Ri[E]$, $\Rj[E]$, $\RI[E]$, $\RJ[E]$, \dots{} etc., respectively.

Let $(\RM_1,E_1)$ and $(\RM_2, E_2)$ be \UnnamedManifolds and let $F \colon \RM_1 \to \RM_2$ be a map. We say that $F$ is a \emph{morphism of \UnnamedManifolds of class $C^{k,\alpha}_{\on{loc}}$} if $F$ is of class $C^{k,\alpha}_{\on{loc}}$ and if there is a morphism $A \colon E_1 \to E_2$ of \UnnamedBundles of class $C^{k-1,\alpha}_{\on{loc}}$ over $F$ with $\RX[A] = \T F$.\footnote{Because of the identity lemma (see \autoref{lem:nulllemma}), the morphism $A$ is unique if it exists.} In this case, we also write $\RH[F]\ceq \RH[A]$ and $\RY[F]\ceq \RY[A]$ such that we obtain the following commutative diagram:
\begin{equation*}
\begin{tikzcd}[column sep=2em, row sep=1.2em]
	\RT[\RM_1]
		\ar[rrr,"{\RT[F]}"]
		\ar[rrd,hook, two heads, "{\Ri[M_1]}"]
		\ar[rrddd]		
	&
	&
	&
	\RT[\RM_2]
		\ar[rrd,hook, two heads, "{\Ri[M_2]}"]			
		\ar[rrddd]				
	\\
	&
	&\RH[\RM_1]
		\ar[rrr,"{\RH[F]}",  pos=0.3, crossing over]	
		\ar[dd]	
		\ar[rrd,hook, two heads, "{\Rj[M_1]}"]				
	&
	&
	&\RH[\RM_2]
		\ar[dd]		
		\ar[rrd,hook, two heads, "{\Rj[M_2]}"]				
	\\
	&
	&
	&
	&\RY[\RM_1]
		\ar[rrr,"{\RY[F]}",  pos=0.7, crossing over]	
		\ar[lld]						
	&
	&
	&
	\RY[\RM_2]
		\ar[lld]							
	\\
	&
	&\RM_1
		\ar[rrr,"{F}"']	
	&
	&
	&\RM_2
	\nospaceperiod
\end{tikzcd}
\invisible{\autoref{dfn:Rieszbundlestructures}}
\end{equation*}
\end{definition}

Every Riemannian manifold is a \UnnamedManifold in a natural way. Moreover, every finite-dimensional submanifold of a \UnnamedManifold is indeed a Riemannian manifold. Hence, one has to watch out for infinite dimensional manifolds in order to find a \UnnamedManifold that is not a Riemannian manifold. We will meet some examples in \autoref{prop:W2pisparaRiemannian} and \autoref{prop:W2pisparaRiemannian2}.

\subsection{Gradient Flow}

\begin{proposition}\label{prop:Gradient}
Let $k \in \N \cup\{\infty\}$ and $\alpha \in \intervalcc{0,1}$.
Let $(M,E)$ be a \UnnamedManifold of class $C^{k,\alpha}_{\on{loc}}$ and let $F \colon M \to \R$ be a morphism of class $C^{k,\alpha}_{\on{loc}}$.
Then the \emph{gradient} $\grad_E(F)$ defined by
\begin{align*}
	\grad_E(F)\at_x \ceq \RY[F][x][\adj] \cdot 1 = \RJ[E]^{-1}\, \RY[F][x]
	\quad
	\text{for $x \in M$}
\end{align*}
is a vector field of class $C^{k-1,\alpha}_{\on{loc}}$ on the Banach manifold $M$.
\invisible{\autoref{dfn:Rieszmanifold}}
It is always ascending with respect to $F$, i.\,e., one has $\ninnerprod{\dd F\at_x , \grad_E(F)\at_x} \geq 0$ and with equality if and only if $x$ is a critical point of $F$. 
If $F$ is of class $C^{1,1}_{\on{loc}}$ then both its forward and backward flow exist for short times.
\end{proposition}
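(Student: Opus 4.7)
My plan splits the proposition into three claims, to be proved in sequence: (i) the given formula does define a vector field of class $C^{k-1,\alpha}_{\on{loc}}$ on $M$; (ii) the ascending inequality holds with equality precisely at critical points; and (iii) short-time flow existence under the stronger regularity hypothesis. I will exploit the fact that the nontrivial functional-analytic content has already been packaged into \autoref{dfn:Rieszbundlestructures} and \autoref{dfn:Rieszmanifold}.

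For (i), I read $\grad_E(F) = \RJ[E]^{-1} \circ \RY[F]$ as a composition of bundle morphisms. The morphism structure of $F$ furnishes the bundle map $\RY[F]$ of class $C^{k-1,\alpha}_{\on{loc}}$ by \autoref{dfn:Rieszmanifold}, and the \UnnamedBundle data on $E$ provides the isomorphism $\RJ[E]$, hence $\RJ[E]^{-1}$, of the same class. Composition preserves this class and produces a section of $\RX[E] = TM$.

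For (ii), I argue pointwise from the defining diagram \eqref{eq:Rieszbundlesquarediagram}. Fixing $x \in M$ and setting $u \ceq \grad_E(F)\at_x$, one has $\RJ[E] \, u = \RY[F][x]$ in the fiber. Because $F$ is a chain map, the differential factors as $dF\at_x = \RY[F][x] \circ \Rj[E] \circ \Ri[E]$ in that fiber. Inserting this and using the fiberwise identity $\Rj[E]\dual \,\RJ[E] = \RI[E]\,\Ri[E]$ reduces the pairing $\ninnerprod{dF\at_x, u}$ to $\nnorm{\Ri[E] \, u}_{\RH[E][x]}^2$, which is manifestly non-negative. The equality case forces $\Ri[E] \, u = 0$; injectivity of $\Ri[E]$ gives $u=0$, equivalently $\RY[F][x] = 0$. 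Since $\Rj[E]\,\Ri[E](\RX[E][x])$ is dense in $\RY[E][x]$ (part of \autoref{dfn:Rieszbundlestructures}) and $\RY[F][x]$ is continuous, this is in turn equivalent to $dF\at_x = 0$, i.e.\ to $x$ being a critical point.

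For (iii), the assumption $F \in C^{1,1}_{\on{loc}}$ upgrades step (i) to the statement that $\grad_E(F)$ is a locally Lipschitz vector field on the Banach manifold $M$. A chartwise application of Picard--Lindel\"of in a Banach space then yields a unique integral curve around any initial point for short positive and negative times. I do not anticipate a serious technical obstacle inside this proof: the delicate analytical content (existence and regularity of $\RJ[E]^{-1}$, denseness of the chain injections, the ability to transport objects through $\RX \to \RH \to \RY$) has been absorbed into the definitions in Section~\ref{sec:RieszStructures}. The only point that warrants care is the last equivalence in (ii), where fiberwise denseness and continuity of $\RY[F][x]$ are essential to conclude that vanishing of the $\RY$-level differential coincides with vanishing of the ordinary differential.
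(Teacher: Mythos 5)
Your proposal is correct and follows essentially the same route as the paper: the ascending property comes from the same chain-map computation reducing $\ninnerprod{\dd F\at_x,u}$ to $\nnorm{\Ri[E]\,u}_{\RH[E][x]}^2$ via $\Rj[E]\dual\,\RJ[E]=\RI[E]\,\Ri[E]$, the equality case uses injectivity of $\Ri[E]$, and the flow follows from Picard--Lindel\"of. Your extra remarks on the $C^{k-1,\alpha}_{\on{loc}}$ regularity of the composition and on the role of fiberwise denseness in identifying $\RY[F][x]=0$ with $\dd F\at_x=0$ are correct elaborations of points the paper leaves implicit.
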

\begin{proof}
With the abbreviation $u = \grad_E(F)\at_x$, we obtain
\begin{align*}
	\ninnerprod{\dd F\at_x , u}
	&= \ninnerprod{\RJ[E]\at_x \; u, \Rj[E] \; \Ri[E] \; u}
	 = \ninnerprod{\RI[E]\at_x \; \Ri[E] \; u, \Ri[E] \; u}
	 = \nabs{\Ri[E] \; u}_{\RH[E][x]}^2 \geq 0.
\end{align*}
Since $\Ri[E]$ is injective, this can only vanish if $u=0$ which is equivalent to $\dd F\at_x = 0$.
If $F$ is of class $C^{1,1}_{\on{loc}}$, then $\grad(F)$ is locally Lipschitz continuous and
the Picard-Lindel\"off theorem implies short time existence of the flows induced by 
$\grad(F)$ and $-\grad(F)$.
\end{proof}

\subsection{Submanifolds}

The following theorem takes the role of the implicit function theorem in the category of \UnnamedManifolds.

\begin{theorem}\label{theo:SubmersionSubmanifold}
Let $k \in \N \cup\{\infty\}$ and $\alpha \in \intervalcc{0,1}$.
Let $\varPhi \colon M_1 \to M_2$ be a morphism of class $C^{k,\alpha}_{\on{loc}}$ between \UnnamedManifolds $(M_1,E_1)$  and $(M_2,E_2)$ of class $C^{k,\alpha}_{\on{loc}}$.
Denote the induced bundle morphism by $A = (\RT[\varPhi][x],\RH[\varPhi][x],\RY[\varPhi][x]) \in \RMor(E_1 ; E_2)$.
For a given point $y_0 \in M_2$, consider the set $M_0\ceq \varPhi^{-1}(\{y_0\}) \subset M_1$ and suppose that for each $x \in M_0$, the morphism $A_x$ is surjective.
Define the Banach bundle $E_0 \ceq \ker(A)\at_{M_0}$ over $M_0$ and bundle morphisms $\RI[E_0] \colon \RH[E_0][] \to \RH[E_0][][\dual]$ and $\RJ[E_0] \colon \RX[E_0][] \to \RY[E_0][][\dual]$ similarly to \eqref{eq:SubspaceMetrics} by
\begin{align}
	\arraycolsep=1.4pt
	\begin{array}{rclrcll}
		\RI[E_0]\at_x \colon \RH[E_0][x] &\to \RH[E_0][x][\dual],
		& \quad\ninnerprod{\RI[E_0]\at_x\, v_1, v_2} &\ceq &\ninnerprod{\RI[E_1]\at_x\, v_1, v_2}
		& \quad \text{for $v_1$, $v_2 \in \RH[E_0][x]$},
		\\
		\RJ[E_0]\at_x \colon \RX[E_0][x] &\to \RY[E_0][x][\dual],
		& \quad\ninnerprod{\RJ[E_0]\at_x\, u, w} &\ceq &\ninnerprod{\RJ[E_1]\at_x\, u, w}
		& \quad \text{for $u \in \RX[E_0][x]$, $w \in \RY[E_0][x]$}.
	\end{array}
	\label{eq:SubmanifoldMetrics}
\end{align}
Then $(M_0,E_0)$ is a \UnnamedManifold of class $C^{k,\alpha}_{\on{loc}}$.
\end{theorem}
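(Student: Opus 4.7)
My plan proceeds in three stages. \emph{First}, I will apply the classical Banach-manifold submersion theorem to produce $M_0$ as an embedded $C^{k,\alpha}_{\loc}$ submanifold of $M_1$. At each $x \in M_0$ the differential $\RT\varPhi|_x = \RX[A]|_x$ is surjective by assumption, and by Theorem~\ref{theo:SubspaceTheorem} (whose applicability I address below) a fiberwise pseudoinverse $A_x\pinv$ exists; the idempotent $A_x\pinv\,A_x$ furnishes a topological complement to $\ker(\RX[A]|_x)$ inside $T_x M_1$, so $\RT\varPhi|_x$ splits. The standard implicit function theorem then realizes $M_0 \subset M_1$ as a $C^{k,\alpha}_{\loc}$-embedded submanifold with $T_x M_0 = \ker(\RX[A]|_x) = \RX[E_0]|_x$.

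\emph{Second}, I promote this Banach tangent bundle to the full three-scale bundle $E_0$ by setting $\RH[E_0] \ceq \ker(\RH[\varPhi])|_{M_0}$ and $\RY[E_0] \ceq \ker(\RY[\varPhi])|_{M_0}$, with $\Ri[E_0]$, $\Rj[E_0]$ the restrictions of the injections of $E_1$ and with $\RI[E_0]$, $\RJ[E_0]$ as in \eqref{eq:SubmanifoldMetrics}. The real work is to verify that these are genuinely locally trivial vector bundles of class $C^{k-1,\alpha}_{\loc}$. To this end I exhibit parameter-dependent orthoprojectors: fixing $x_0 \in M_0$, Theorem~\ref{theo:SubspaceTheorem} asserts that the saddle-point operator
\begin{equation*}
    S_x \ceq
    \begin{pmatrix}
        \RJ[E_1]|_x & (\RY[\varPhi]|_x)\dual \\
        \RX[\varPhi]|_x & 0
    \end{pmatrix}
\end{equation*}
is continuously invertible at $x_0$. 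Since $S$ depends on $x$ with class $C^{k-1,\alpha}_{\loc}$ and since operator inversion is $C^\infty$ on the open set of invertible operators in a Banach algebra, $x \mapsto S_x^{-1}$ is of class $C^{k-1,\alpha}_{\loc}$ in a neighborhood of $x_0$. Via Remark~\ref{rem:saddlepointmatrixisuseful} this produces $C^{k-1,\alpha}_{\loc}$-smooth fiber projectors onto $\ker(\RZ[\varPhi]|_x)$ for each $\RZ \in \{\RX,\RH,\RY\}$, which serve as the local trivializations of $\RX[E_0]$, $\RH[E_0]$, $\RY[E_0]$.

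\emph{Third}, the para-Hilbert axioms are then largely inherited. Fiberwise, Theorem~\ref{theo:SubspaceTheorem} certifies that $(E_0|_x, \RI[E_0]|_x, \RJ[E_0]|_x)$ is a \UnnamedSpace, which supplies the commutative square \eqref{eq:Rieszbundlesquarediagram} pointwise; density of $\Ri[E_0]$ and $\Rj[E_0]$ follows from Corollary~\ref{cor:kerimadense} applied to $A_x$; and the bundle regularity of $\Ri[E_0]$, $\Rj[E_0]$, $\RI[E_0]$, $\RJ[E_0]$ is obtained by restricting the class-$C^{k-1,\alpha}_{\loc}$ maps of $E_1$ to the subbundles built in stage two.

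\textbf{The main obstacle} is really a single technical point: although the theorem states only surjectivity, both the submersion argument and the fiberwise Subspace Theorem~\ref{theo:SubspaceTheorem} require the fiberwise pseudoinverse to exist and the saddle-point inverse $x \mapsto S_x^{-1}$ to depend on $x$ with regularity $C^{k-1,\alpha}_{\loc}$. I would handle the latter via the standard Banach-algebra Neumann argument (inversion is $C^\infty$ on the open set of invertible operators); the former is automatic under mild extra assumptions (e.g.\ the Fredholmness condition of Corollary~\ref{cor:ApinvBstarBFredholm}, and in particular whenever $E_2$ has finite-dimensional fibers). Once both are in hand, the trivializations, projectors, and metric data all inherit the expected degree of smoothness, and the three stages above combine to give the claim.
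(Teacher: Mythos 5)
Your proposal follows essentially the same route as the paper's proof: the implicit function theorem produces $M_0$ as a Banach submanifold with $T_xM_0=\ker(\RX[A]|_x)$, \autoref{theo:SubspaceTheorem} supplies the fiberwise \UnnamedSpace structure on $E_0|_x$, and the $C^{k-1,\alpha}_{\loc}$-regular family of projectors $P_x=(A\pinv A)_x$ yields local triviality of the three kernel bundles. The one obstacle you flag --- that surjectivity of $A_x$ alone does not guarantee the fiberwise pseudoinverse which both \autoref{theo:SubspaceTheorem} and the projector construction require --- is genuine, but the paper's own proof makes the same silent use of $A\pinv$, so your treatment is if anything more explicit (and your suggestion to close the gap via \autoref{cor:ApinvBstarBFredholm}, in particular when $E_2$ is finite-dimensional, matches the paper's remark immediately following the theorem).
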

\begin{proof}
By the implicit function theorem, $M_0$ is a Banach submanifold of class $C^{k,\alpha}_{\on{loc}}$.  Moreover, we have $T_xM_0 = \ker(\RT[\varPhi][x]) = \RX[E_0][x]$  for each $x \in M_0$.
We infer that $E_0\at_x$ is a \UnnamedSpace from \autoref{theo:SubspaceTheorem}. Since the family of projectors $\RZ[P][x] \ceq \RZ[(A\pinv A)][x]$ is continuous and since we have $\RZ[E_0][x]= \ker(\RZ[A][x]) = \ker(\RZ[P][x])$, we may deduce that $\RZ[E_0]$ is a vector bundle of class $C^{k-1,\alpha}_{\on{loc}}$ over $M_0$ for each $\RZ \in \{\RX, \RH,\RY\}$. 
\end{proof}

Note that \autoref{cor:ApinvBstarBFredholm} implies that $M_0$ is a \UnnamedManifold whenever $\varPhi \colon M_1 \to M_2$ is a morphism with surjective linearization into a \emph{finite-dimensional} \UnnamedManifold $M_2$.

\subsection{Projected Gradient Flow}

For a submanifold $(M_0,E_0) \subset (M_1,E_1)$ as above, the \emph{restricted} or \emph{projected gradient} flow is not only existent but also computable from $\RJ[E_1]$ and the from linearization of the constraint mapping $\varPhi$. This is crucial for our numerical applications in \autoref{sec:NumericalExamples} below.

\begin{proposition}\label{prop:ProjectedGradient}
Suppose the setting of \autoref{theo:SubmersionSubmanifold}. Let $F \colon M_1 \to \R$ be a morphism of class $R=C^{1,1}_{\on{loc}}\cap C^{k,\alpha}_{\on{loc}}$. Then also $F_0 \ceq F|_{M_0} \colon M_0 \to \R$ is a morphism of class $R$. By \autoref{prop:Gradient}, we have short time existence of the gradient flow of $F_0$. Note that for $x \in M_0$, the gradient $\grad_{E_0}(F_0)\at_x$ is identical to the orthoprojection of $\grad_{E_1}(F)\at_x$ onto $\RX[E_0][x] = \ker(\RX[\varPhi][x])$. By \autoref{rem:saddlepointmatrixisuseful}, there is a Lagrange multiplier $\lambda(x) \in \RY[E_2][\varPhi(x)][\dual]$ with
\begin{align*}
	\begin{pmatrix}
		\RJ[E_1]\at_x 	& \RY[\varPhi][x][\dual]
		\\
		\RX[\varPhi][x]			& 0
	\end{pmatrix}
	\begin{pmatrix}
		\grad_{E_0}(F)\at_x
		\\
		\lambda(x)
	\end{pmatrix}
	=
	\begin{pmatrix}
		\RJ[E_1] \grad_{E_1}(F)\at_x
		\\
		0
	\end{pmatrix}
	=
	\begin{pmatrix}
		\RY[F][x]
		\\
		0
	\end{pmatrix}.
\end{align*}
\end{proposition}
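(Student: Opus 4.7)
The plan is to derive the proposition as a direct synthesis of the gradient construction (\autoref{prop:Gradient}), the saddle-point observation (\autoref{rem:saddlepointmatrixisuseful}), and the compatibility of the subspace metric defined in \eqref{eq:SubmanifoldMetrics}. First I would promote the set-theoretic inclusion $\iota\colon M_0 \hookrightarrow M_1$ to a morphism of \UnnamedManifolds of class $R$: its tangent map is the fiberwise inclusion $\RX[E_0]\hookrightarrow \RX[E_1]$, and the accompanying bundle morphisms $\RH[\iota]$, $\RY[\iota]$ are the analogous fiberwise inclusions, which are continuous of class $C^{k-1,\alpha}_{\on{loc}}$ because they arise as the ranges of the regular projectors $\RZ[P] \ceq \RZ[(A\pinv A)]$ constructed in the proof of \autoref{theo:SubmersionSubmanifold}. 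Consequently $F_0 = F\circ\iota$ is a morphism of class $R$, and \autoref{prop:Gradient} yields both its gradient and short-time flow existence.

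Next I would identify $\grad_{E_0}(F_0)\at_x$ with the $\Rg[E_1]\at_x$-orthoprojection of $\grad_{E_1}(F)\at_x$ onto $\RX[E_0][x] = \ker(\RX[\varPhi][x])$. The key observation, from \eqref{eq:SubmanifoldMetrics}, is that $\Rg[E_0]\at_x$ equals the restriction of $\Rg[E_1]\at_x$ to $\RX[E_0][x]\times \RX[E_0][x]$; simultaneously, $\dd F_0\at_x$ is the restriction of $\dd F\at_x$ to $\RX[E_0][x] = T_x M_0$ by the chain rule for $F_0 = F\circ\iota$. Testing against an arbitrary $w\in \RX[E_0][x]$ then yields
\begin{align*}
	\Rg[E_1]\at_x(\grad_{E_0}(F_0)\at_x,w) = \Rg[E_0]\at_x(\grad_{E_0}(F_0)\at_x,w) = \ninnerprod{\dd F\at_x, w} = \Rg[E_1]\at_x(\grad_{E_1}(F)\at_x,w),
\end{align*}
so the difference $\grad_{E_0}(F_0)\at_x - \grad_{E_1}(F)\at_x$ is $\Rg[E_1]\at_x$-orthogonal to $\RX[E_0][x]$. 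Since $\grad_{E_0}(F_0)\at_x$ itself lies in $\RX[E_0][x]$, this is precisely the orthoprojection assertion.

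Finally, the saddle-point system falls out of a direct application of \autoref{rem:saddlepointmatrixisuseful} to the morphism $A \ceq (\RT[\varPhi][x],\RH[\varPhi][x],\RY[\varPhi][x])$ and the vector $u_1 \ceq \grad_{E_1}(F)\at_x \in \RX[E_1][x]$: the remark produces a Lagrange multiplier $\lambda(x)\in \RY[E_2][\varPhi(x)][\dual]$ such that the pair $(\grad_{E_0}(F_0)\at_x,\lambda(x))$ solves the $2\times 2$ block system with right-hand side $(\RJ[E_1]\at_x \grad_{E_1}(F)\at_x, 0)$. The defining formula $\grad_{E_1}(F)\at_x = \RJ[E_1]\at_x^{-1}\,\RY[F]\at_x$ from \autoref{prop:Gradient} then rewrites the top entry as $\RY[F]\at_x$, giving the displayed system; invertibility of the block matrix (and hence well-definedness of $\lambda(x)$) is guaranteed by \autoref{theo:SubspaceTheorem}, whose hypotheses are subsumed in the setting of \autoref{theo:SubmersionSubmanifold}. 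I expect the only delicate point is verifying that $\iota$ is a bona fide morphism of \UnnamedManifolds in the sense of \autoref{dfn:Rieszmanifold}, which hinges on the regularity of the bundle projectors coming from the pseudoinverse; once this is granted, the remainder is algebraic manipulation.
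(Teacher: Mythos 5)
Your argument is correct and follows exactly the route the paper intends: the paper states this proposition without a separate proof, relying on the citations embedded in its statement (\autoref{prop:Gradient}, \eqref{eq:SubmanifoldMetrics}, \autoref{rem:saddlepointmatrixisuseful}, \autoref{theo:SubspaceTheorem}), and your write-up faithfully expands precisely those steps — the restriction of $F$ along the inclusion, the characterization of $\grad_{E_0}(F_0)\at_x$ via the restricted metric and the $\Rg[E_1]$-orthogonality of $\ima(\RX[A][][\adj])$ to $\ker(\RX[A])$, and the saddle-point system with top entry rewritten via $\RJ[E_1]\grad_{E_1}(F)\at_x = \RY[F][x]$. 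No gaps.
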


\begin{figure}[ht]
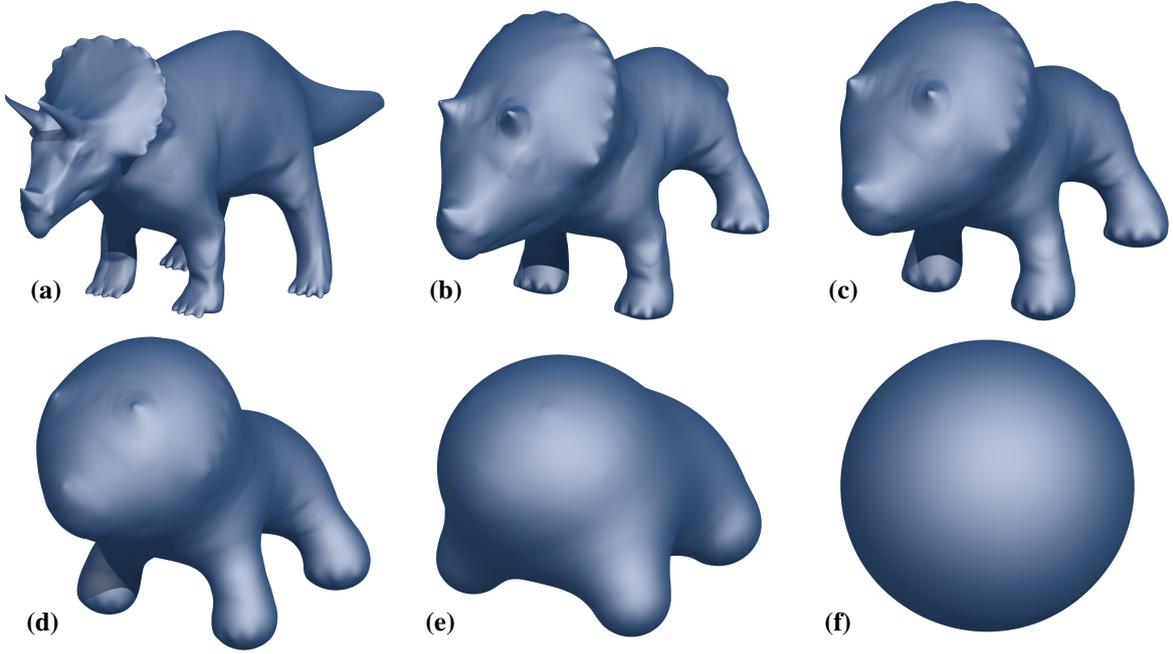

\capstart
\newcommand{\settrimming}{%
    \setkeys{Gin}{%
        trim = 20 55 0 105 , clip=true, 
        width=0.32\textwidth
    }
    \presetkeys{Gin}{clip}{}
}
\settrimming
\begin{center}
\myincludegraphics{Triceratops_1448960F_AB_0001.png}{a}
\myincludegraphics{Triceratops_1448960F_AB_0007.png}{b}
\myincludegraphics{Triceratops_1448960F_AB_0013.png}{c}
\\
\myincludegraphics{Triceratops_1448960F_AB_0019.png}{d}
\myincludegraphics{Triceratops_1448960F_AB_0025.png}{e}
\myincludegraphics{Triceratops_1448960F_AB_0031.png}{f}
\caption{Discrete surface ($1.4$ million faces) relaxing under $H^2$-gradient descent for the Willmore energy subject to equality constraints on barycenter and total area:
(a) initial condition; (b)--(e) iterations 6, 12, 18, 24, and 30, respectively.
}
\label{fig:Triceratops}
\end{center}
\invisible{\autoref{theo:elasticafunctionalisRiesz}\autoref{theo:submersionsubmanifold}}
\end{figure}

\section{Curvature Energies}\label{sec:CurvatureEnergies}

In the following, we fix a compact, $n$-dimensional, smooth manifold $\varSigma$.
We are going to formulate curvature dependent energies such as the elastica energy or the Willmore energy on the space of immersions
\begin{align*}
 	\Imm^{2,p}(\varSigma;\AmbSpace) \ceq \set{f \in \sobo{2}{p}[\varSigma][\AmbSpace] | \text{for all $x \in \varSigma$: $\dd f\at_x$ is injective}},
 	\quad
 	\text{for $p>n$.}
\end{align*} 
Let $f \in \sobo{2}{p}[\varSigma][\AmbSpace]$, fix a smooth Riemannian metric $G$ on $\varSigma$ as reference, and denote the Euclidean metric on $\AmbSpace$ by $\AmbMetric$.
By the Morrey embedding $\sobo{2}{p}[\varSigma][\AmbSpace] \hookrightarrow C^{1,\alpha}(\varSigma;\AmbSpace)$ with $\alpha = 1-n/p>0$, the pointwise derivative of $f$ exists and it is $\alpha$-H\"{o}lder continuous. This also shows that the set $\Imm^{2,p}(\varSigma;\AmbSpace) $ is an open subset of $\sobo{2}{p}[\varSigma][\AmbSpace]$.
For $f \in \Imm^{2,p}(\varSigma;\AmbSpace)$, the \emph{pullback metric} $f^\pull \AmbMetric \ceq \AmbMetric(\dd f \, \cdot, \dd f \, \cdot)$ is a Riemannian metric of class $\sobo{1}{p}$ and there are constants $0<\lambda\leq \varLambda<\infty$ so that
$\lambda \, G \leq f^\pull \AmbMetric \leq \varLambda \, G$.
Moreover, $f^\pull \AmbMetric$ induces a \emph{Riemannian density} $\vol_f \ceq \vol_{f^\pull \AmbMetric}$ of class $\sobo{1}{p}$. 

\bigskip

Before we consider curvature functionals, we need the notion of the \emph{second fundamental form} of an immersion. There are various ways to introduce it. We decided to introduce it by utilizing the Hessian of a vector-valued function with respect to $f^\pull \AmbMetric$.

\begin{proposition}\label{prop:Hessian}
Let $f \in \Imm^{2,p}(\varSigma;\AmbSpace)$ with $p>\dim(\varSigma)$ and let $r \in \intervalcc{1,\infty}$. For $u \in \sobo{1}{r}[\varSigma][\AmbSpace]$, we define the \emph{Hessian of $u$ with respect to $f^\pull \AmbMetric$} by
\begin{align*}
	\Hess[f](u)(X,Y) = (\dd(\dd u \, \dd f^\dagger)\,X) \cdot (\dd f \,Y),
	\quad
	\text{for all smooth vector fields $X$ and $Y$  on $\varSigma$.}
\end{align*}
Then
$
 	\Hess[f] \colon \sobo{2}{r}[\varSigma][\AmbSpace] \to  L^{\min(p,r)}(\varSigma;\Sym^2(T\varSigma;\AmbSpace))
$
is a well-defined and continuous operator which depends smoothly on $f$.\footnote{We point out for geometers that $\Hess[f](u)$ coincides with the Hessian $\nabla^{f^\pull \AmbMetric} \dd u$, provided that $f$ and $u$ are sufficiently smooth. Of course, $\nabla^{f^\pull \AmbMetric}$ denotes the Levi-Civita connection of the Riemannian metric $f^\pull \AmbMetric$}
\end{proposition}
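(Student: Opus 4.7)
The proof splits naturally into three tasks: (i) control of the pseudoinverse $\dd f^\dagger$, (ii) the Sobolev/Hölder estimate for $\Hess_f(u)$ in $L^{\min(p,r)}$, and (iii) smooth dependence on $f$.

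My plan is to start by writing, in local coordinates on $\varSigma$, the pseudoinverse as $\dd f^\dagger = \textbf{G}_f^{-1}(\dd f)\transp$ where $\textbf{G}_f \ceq (\dd f)\transp \dd f$ is the Gram matrix. Since $f\in\sobo{2}{p}[\varSigma][\AmbSpace]$ and $p>n=\dim(\varSigma)$, the Morrey embedding $\sobo{1}{p}[\varSigma]\hookrightarrow C^{0,\alpha}(\varSigma)$ with $\alpha=1-n/p$ makes $\sobo{1}{p}[\varSigma]$ a Banach algebra; in particular $\textbf{G}_f\in\sobo{1}{p}[\varSigma]$. The immersion condition combined with compactness of $\varSigma$ guarantees that $\textbf{G}_f$ is bounded below by a positive constant, so the smooth map $\on{GL}(n)\to\on{GL}(n),\ A\mapsto A^{-1}$ applied pointwise yields $\textbf{G}_f^{-1}\in\sobo{1}{p}[\varSigma]$, and thus $\dd f^\dagger\in\sobo{1}{p}[\varSigma]$ as well.

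For the mapping property, I expand formally
\begin{align*}
	\dd(\dd u\,\dd f^\dagger) = \dd^2 u\,\dd f^\dagger + \dd u\,\dd(\dd f^\dagger),
\end{align*}
so $\Hess_f(u)$ is a sum of two terms pointwise contracted with $\dd f$. The first term has $\dd^2 u \in L^r$ and $\dd f^\dagger,\dd f\in L^\infty$ (by Morrey on $\sobo{1}{p}$), hence lies in $L^r$. The second term is the delicate one: $\dd u\in\sobo{1}{r}[\varSigma]$ and $\dd(\dd f^\dagger)\in L^p$. I would split into cases. If $r>n$, Morrey gives $\dd u\in L^\infty$, so the product lies in $L^p$. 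If $r=\infty$ or $r\geq p$, the product is in $L^p$ directly. If $r\leq n$ (and $r<p$), the Sobolev embedding yields $\dd u\in L^{r^*}$ with $1/r^*=1/r-1/n$, and H\"older gives a product in $L^t$ with $1/t=1/r^*+1/p=1/r-1/n+1/p$. Since $p>n$, we have $1/t<1/r$, so the product lies in $L^r$. In every case the result sits in $L^{\min(p,r)}$, and the argument yields a continuous linear estimate $\norm{\Hess_f(u)}_{L^{\min(p,r)}} \leq C(f)\,\norm{u}_{\sobo{2}{r}[\varSigma]}$.

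For smooth dependence on $f$, I would express $\Hess_f$ as a composition of maps that are all smooth between the relevant Banach spaces: $f\mapsto\dd f$ is continuous linear from $\sobo{2}{p}[\varSigma][\AmbSpace]$ to $\sobo{1}{p}[\varSigma]$; $\dd f\mapsto\textbf{G}_f$ is continuous bilinear (hence smooth) using that $\sobo{1}{p}[\varSigma]$ is an algebra; inversion in $\on{GL}(n)$ is smooth and lifts to a smooth map on the open set of pointwise invertible matrix fields in $\sobo{1}{p}[\varSigma]$; and the remaining contractions with $\dd^2 u$, $\dd u$, $\dd f$ are continuous bilinear and multilinear in their respective arguments, where the H\"older estimates above supply the required boundedness. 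Assembling these yields that $f\mapsto\Hess_f(\,\cdot\,)\in L\bigparen{\sobo{2}{r}[\varSigma][\AmbSpace];L^{\min(p,r)}(\varSigma;\Sym^2(T\varSigma;\AmbSpace))}$ is smooth.

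The main obstacle is the cross term $\dd u\,\dd(\dd f^\dagger)$ when $r<p$ and $r\leq n$: without $p>n$ we could not gain the Sobolev headroom that pushes $1/r^*+1/p$ below $1/r$, and the target space $L^{\min(p,r)}$ would fail. Everything else is routine once the algebra property of $\sobo{1}{p}[\varSigma]$ and the smoothness of matrix inversion are in place.
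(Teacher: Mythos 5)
Your argument follows essentially the same route as the paper: both reduce the claim to the facts that $\dd f^\dagger \in W^{1,p}$ and that multiplication maps $W^{1,p}\times W^{1,r}\to W^{1,\min(p,r)}$ continuously when $p>\dim\varSigma$. The paper packages the second fact as a separate statement (\autoref{lem:regularityofproducts}) applied to the product $\dd u\,\dd f^\dagger$, whereas you re-derive it inline via the Leibniz expansion; and for smooth dependence on $f$ the paper differentiates the Moore--Penrose pseudoinverse directly via the formula \eqref{eq:derivativeofMoorePenrose}, while you invert the Gram matrix in the Banach algebra $W^{1,p}$. Both are sound, and your version is the more self-contained of the two, at the price of not producing the derivative formula that the paper reuses later for $D\II(f)$.

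There is one genuine, though easily repaired, slip in your case analysis of the cross term $\dd u\,\dd(\dd f^\dagger)$: for $r=n$ with $n\geq 2$, the asserted embedding $W^{1,r}\hookrightarrow L^{r^*}$ with $1/r^*=1/r-1/n$ would amount to $W^{1,n}\hookrightarrow L^\infty$, which is false. The conclusion survives because $p>n$ leaves slack: choose any finite $q$ with $1/q\leq 1/n-1/p$ (possible precisely because $p>n$), use $W^{1,n}\hookrightarrow L^q$, and H\"older then places the product in $L^n=L^{\min(p,r)}$. This borderline case is exactly why the paper's multiplication lemma treats $r=n$ separately from $r<n$ and $r>n$. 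Apart from this, your estimates, the uniform lower bound on the Gram matrix from compactness together with the immersion condition, and the assembly of smoothness in $f$ from smooth maps between Banach spaces are all correct.
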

\begin{proof}
Let $u \in \sobo{2}{r}[\varSigma][\AmbSpace]$ so that $\dd u$ is an element of $\sobo{1}{r}[\varSigma][\Hom(T\varSigma;\AmbSpace)]$. 
Since $\dd f$ has always maximal rank, we may write $\dd f^\dagger = (\dd f^{*}\dd f)^{-1}\dd f^{*}$,
where the adjoint can taken with respect to any \emph{arbitrary} smooth Riemannian metric $G$ on $\varSigma$.
Hence we deduce $\dd f^\dagger \in \sobo{1}{p}[\varSigma][\Hom(\AmbSpace;T\varSigma)]$ and \autoref{lem:regularityofproducts} below shows that $\dd u \, \dd f^\dagger \in  \sobo{1}{\min(p,r)}[\varSigma][\Hom(\AmbSpace;\AmbSpace)]$ so that $\Hess[f](u)$ is well-defined and continuous in $u$.

In order to show that $\Hess[f]$ depends smoothly on $f$, we first observe that the Moore-Penrose pseudoinverse restricted to linear maps of fixed rank is a smooth transformation. 
A~concise formula for its derivative can be found in \cite{MR0336980}. It allows us to deduce for each $v \in \sobo{2}{p}[\varSigma][\AmbSpace]$:
\begin{align}
		D(f\mapsto \dd f^\dagger)\,v 
	&= -\dd f^\dagger \, \dd v \, \dd f^\dagger + \dd f^\dagger(\dd v \, \dd f^\dagger)^*(\id_\AmbSpace - \dd f \, \dd f^\dagger).
	\label{eq:derivativeofMoorePenrose}
\end{align}
Successive applications of this formula and of \autoref{lem:regularityofproducts} show that $f\mapsto \dd f^\dagger$ and $f \mapsto \Hess[f]$
are smooth, provided that $f \in \Imm^{2,p}(\varSigma;\AmbSpace)$.
\end{proof}

\begin{definition}\label{dfn:elasticafunctional}
Let $p \in \Imm^{2,p}(\varSigma;\AmbSpace)$ with $p\in \intervaloo{\dim(\varSigma),\infty} \cap \intervalco{2,\infty}$.
The \emph{second fundamental form $\II(f)$ of $f$} can be written as $\II(f) \ceq \Hess[f](f)$ so that one has $\II(f) \in  L^p(\varSigma; \Sym^2(T\varSigma;\AmbSpace))$. 
We define the \emph{elastica functional} $\ElasticEnergy \colon \ConfSpace \to \R$ and the \emph{Willmore energy} $\Willmore \colon \ConfSpace \to \R$ by
\begin{align*}
	\ElasticEnergy(f) = \int_\varSigma \nabs{\II(f)}_{f^\pull\AmbMetric}^2 \, \vol_{f}
	\qand
	\Willmore(f) \ceq \int_\varSigma \nabs{H(f)}^2 \, \vol_{f},
\end{align*}
where $H(f) = \frac{1}{\dim(\varSigma)} \tr_{f^\pull \AmbMetric} \II(f) = \frac{1}{\dim(\varSigma)} \, \Delta_f \, f$ is the \emph{mean curvature vector} of $f$.
\end{definition}

Next, we equip certain subspaces of $\Imm^{2,p}(\varSigma;\AmbSpace)$ with a \UnnamedManifold structure that fits well with the energies $\ElasticEnergy$ and $\Willmore$.
As already mentioned in the introduction, we would like to use Riesz isomorphisms induced by the bilinear forms $b_1$ and $b_2$ from \eqref{eq:bilinearformsonImm}.
Contrary to $b_2$, $b_1$ is not positive definite, hence we have to impose some constraints on the space of immersions in order to obtain a \UnnamedManifold.
This additional effort is justified by our experimental observations: They indicate that $b_1$ tends to generate sharper gradient search directions than $b_2$.

In order to streamline the exposition, we only consider connected manifolds.
While we impose Dirichlet boundary conditions in the case of nontrivial boundary, we fix the barycenter of the immersed surface in order to eliminate the kernel of $\Delta_f$.

\begin{figure}
\capstart
\newcommand{\settrimming}{%
    \setkeys{Gin}{%
        trim = 0 0 0 0 , clip=true, 
        width=0.32\textwidth
    }
    \presetkeys{Gin}{clip}{}
}
\settrimming
\begin{center}
\includegraphics{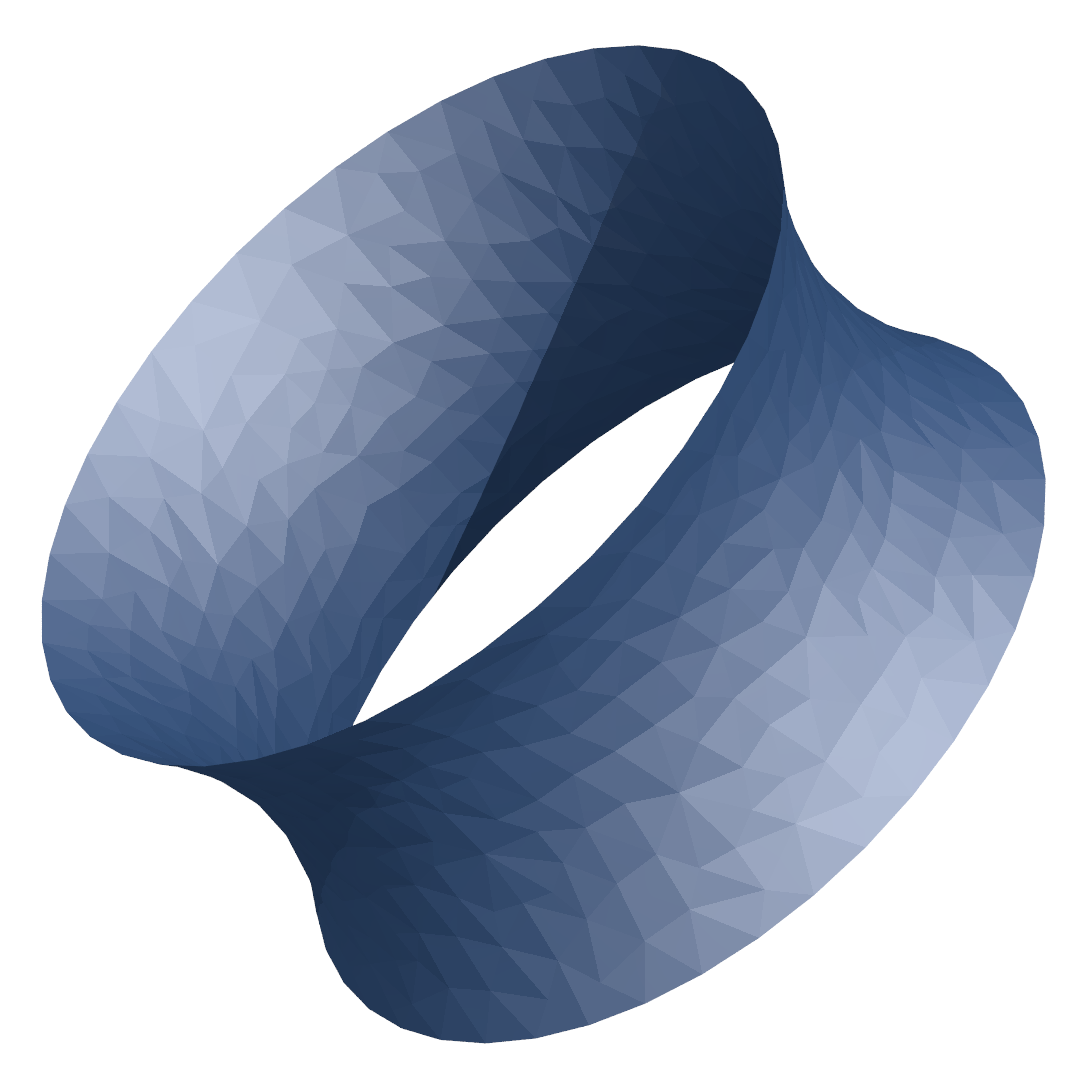}
\includegraphics{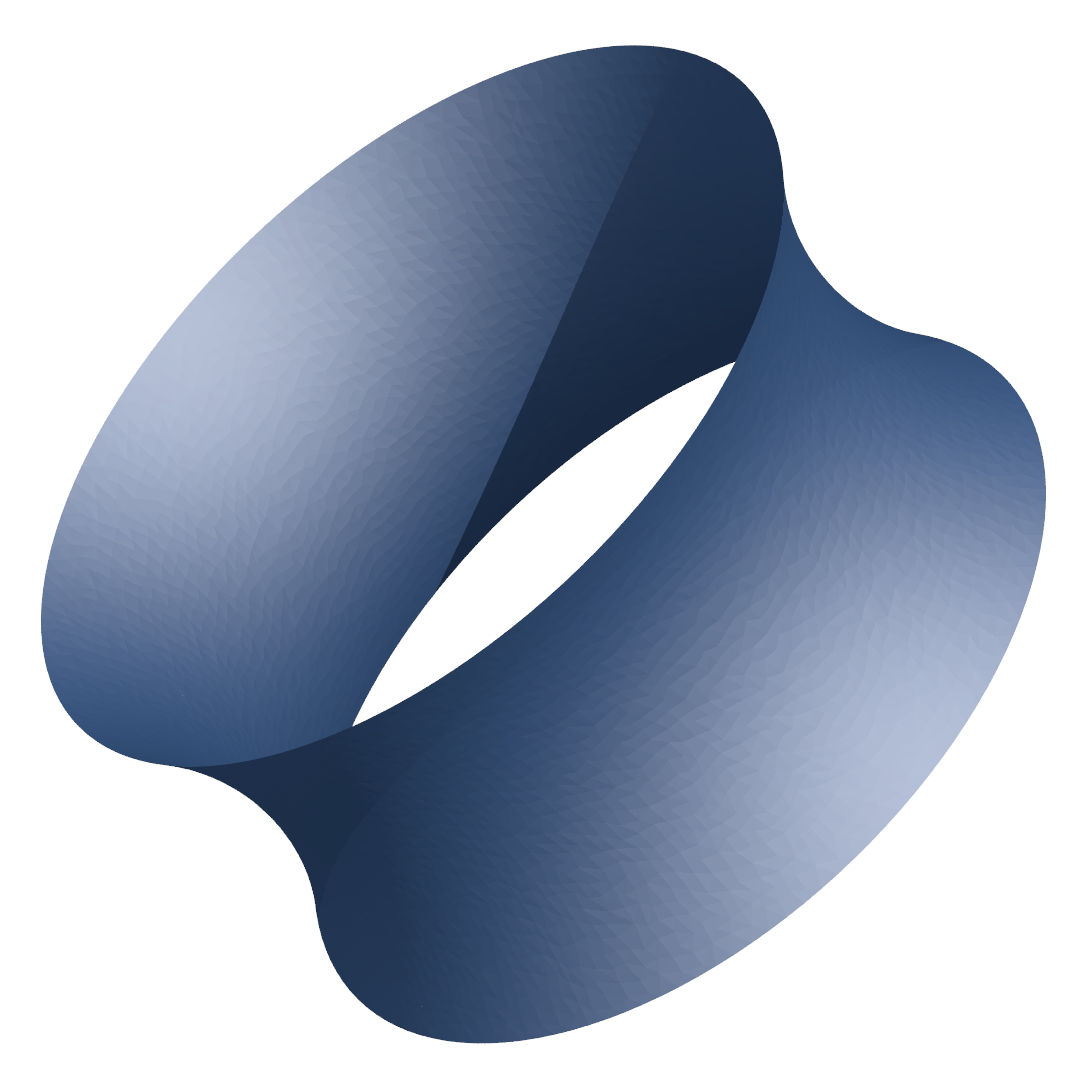}
\includegraphics{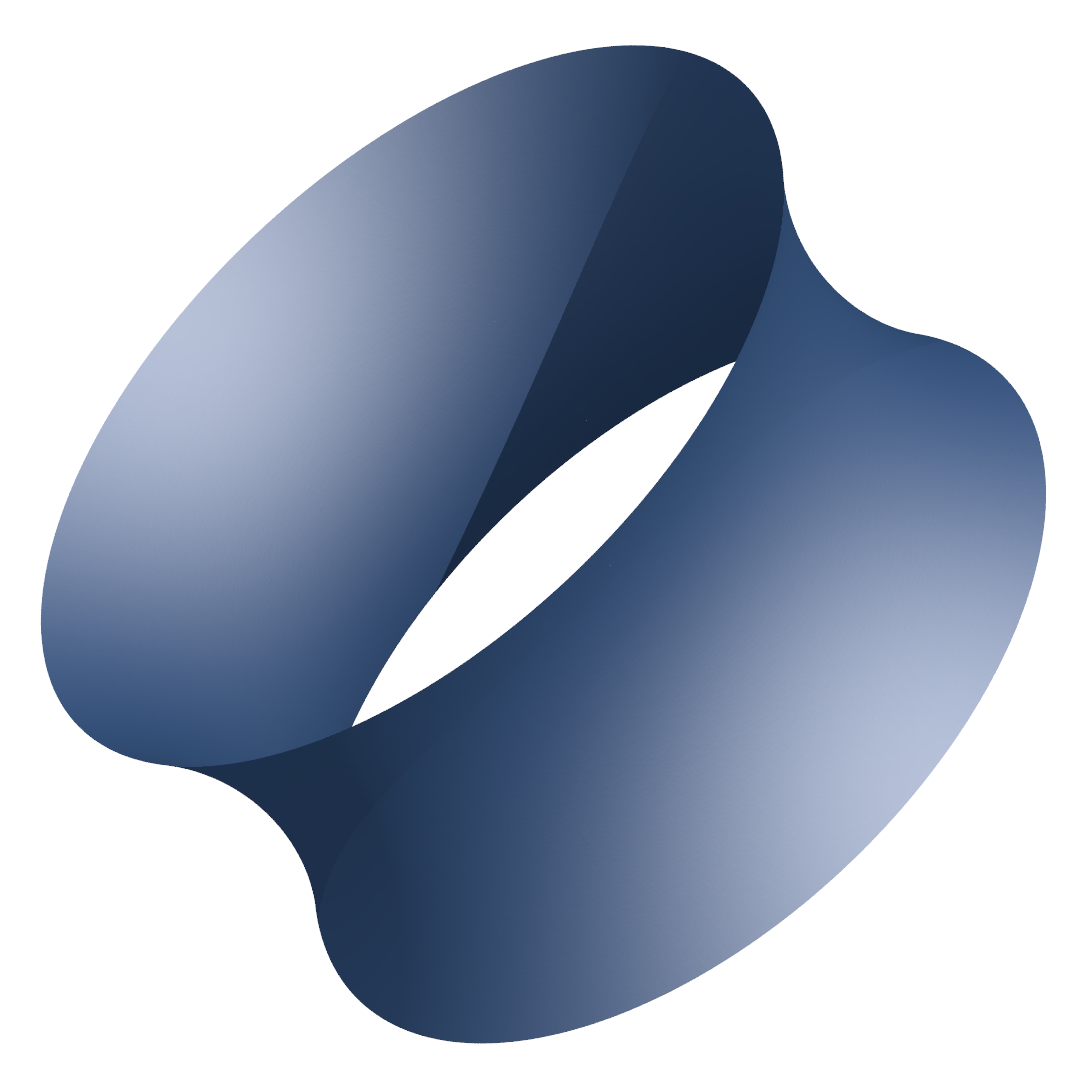}
\caption{Minimizers of the discrete Willmore energy subject to Dirichlet boundary conditions at three different mesh resolutions. They happen to be discrete minimal surfaces (the discrete mean curvature vanishes).
}
\label{fig:Cylinders1}
\end{center}
\end{figure}

\begin{proposition}\label{prop:W2pisparaRiemannian}
Let $\varSigma$ be a connected, $n$-dimensional, compact manifold with nontrivial boundary of class $C^{1,1}$, $p \in \nintervaloo{n,\infty}\cap\nintervalco{2,\infty}$, and
$\gamma \in \Imm^{s,p}(\partial \varSigma;\AmbSpace)$ with $s\ceq 2-1/p$.
Denote the H\"{o}lder conjugate of $p$ by $q = (1-1/p)^{-1}$.
Define $\ConfSpace \ceq \set{f \in \Imm^{2,p}(\varSigma;\AmbSpace) | f\at_{\partial \varSigma} = \gamma}$ and for $f \in \cC$ define the Banach spaces	
\begin{gather*}
	\RX[E][f] \ceq \soboo{2}{p}[\varSigma][\AmbSpace],
	\qquad
	\RH[E][f] \ceq \soboo{2}{2}[\varSigma][\AmbSpace],
	\\
	\qquad \text{and} \qquad 
	\RY[E][f] \ceq \soboo{2}{1}[\varSigma][\AmbSpace].
\end{gather*}
Denote the canonical bundle injections by $\Ri[E] \colon \RX[E] \hookrightarrow \RH[E]$ and $\Rj[E] \colon \RH[E] \hookrightarrow \RY[E]$ and define the mappings
\begin{align}
	\RI[E]\at_f \colon \RH[E][f] &\to \RH[E][f][\dual],
	&
	\ninnerprod{ \RI[E]\at_f \; v_1 , \; v_2}
	&\ceq \textstyle\int_\varSigma \ninnerprod{ \Delta_f \, v_1,\Delta_f \, v_2} \, \vol_f
	\label{eq:RieszI_Imm}
	\\
	\RJ[E]\at_f \colon \RX[E][f] &\to \RY[E][f][\dual],
	&
	\ninnerprod{ \RJ[E]\at_f \; u , \; w}
	&\ceq \textstyle \int_\varSigma \ninnerprod{ \Delta_f \, u,\Delta_f \, w} \, \vol_f.
	\label{eq:RieszJ_Imm}
\end{align}
Then $(\cC,E)$ is a smooth \UnnamedManifold.
\end{proposition}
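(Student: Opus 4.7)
The plan is to verify the axioms of \autoref{dfn:Rieszmanifold} by working fibrewise and then establishing smooth dependence on $f$. First I would observe that the trace operator $W^{2,p}(\varSigma;\AmbSpace) \to W^{s,p}(\bnd\varSigma;\AmbSpace)$ is surjective and admits a bounded right inverse, so $\gamma$ extends to some $f_\gamma \in W^{2,p}(\varSigma;\AmbSpace)$; thus $\{f \in W^{2,p}(\varSigma;\AmbSpace) : f\at_{\bnd\varSigma} = \gamma\}$ is an affine subspace modelled on $\soboo{2}{p}[\varSigma][\AmbSpace]$. By the Morrey embedding into $C^{1,\alpha}$ the immersion condition is open, so $\cC$ is a smooth Banach manifold with $T_f\cC = \soboo{2}{p}[\varSigma][\AmbSpace] = \RX[E][f]$. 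The bundles $\RX[E]$, $\RH[E]$, $\RY[E]$ are then trivial as families of topological vector spaces over $\cC$, and the Sobolev inclusions $\Ri[E]$, $\Rj[E]$ are continuous and dense (by smooth approximation respecting vanishing boundary trace, using that $p \geq 2$).

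Next I would verify that $\RI[E]$ and $\RJ[E]$ are smooth bundle morphisms with the asserted mapping properties. Writing $\Delta_f u = \tr_{f^\pull \AmbMetric} \Hess[f](u)$, \autoref{prop:Hessian} shows that $\Delta_f \colon \sobo{2}{r}[\varSigma][\AmbSpace] \to L^{\min(p,r)}(\varSigma;\AmbSpace)$ is continuous and smooth in $f \in \cC$, and the same reasoning (together with smoothness of $f \mapsto \dd f^\dagger$ established in the proof of \autoref{prop:Hessian}) gives smoothness of $f \mapsto \vol_f$ as an $L^\infty$-valued function. H\"older's inequality then yields $\abs{b_1(u,w)} \leq C_f \nnorm{u}_{W^{2,p}} \nnorm{w}_{W^{2,q}}$, so that $\RJ[E]\at_f \in L(\RX[E][f];\RY[E][f][\dual])$, and an analogous estimate handles $\RI[E]\at_f$. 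Commutativity of \eqref{eq:Rieszbundlesquarediagram} is immediate from the identity $\ninnerprod{\Rj[E]\dual\,\RJ[E]\at_f\,u,v} = \int_\varSigma \ninnerprod{\Delta_f u,\Delta_f v}\,\vol_f = \ninnerprod{\RI[E]\at_f\,\Ri[E]\,u,v}$.

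The heart of the argument is to show that $\RJ[E]\at_f$ is a Banach isomorphism. I would factor $\RJ[E]\at_f = S_f \circ \Delta_f$, where $\Delta_f \colon \soboo{2}{p}[\varSigma][\AmbSpace] \to L^p(\varSigma;\AmbSpace)$ is the Dirichlet Laplace--Beltrami operator at exponent $p$ and $S_f \colon L^p(\varSigma;\AmbSpace) \to \RY[E][f][\dual]$ is given by $\ninnerprod{S_f v,w} \ceq \int_\varSigma \ninnerprod{v,\Delta_f w}\,\vol_f$. Since the metric coefficients $g^{ij}_f$ lie in $W^{1,p}(\varSigma) \hookrightarrow C^{0,\alpha}(\varSigma)$ with $\alpha = 1 - n/p > 0$ and $\bnd\varSigma$ is $C^{1,1}$, the classical $L^r$-theory of second-order elliptic operators (Agmon--Douglis--Nirenberg / Calder\'on--Zygmund) applies and yields that $\Delta_f$ is an isomorphism $\soboo{2}{r}[\varSigma][\AmbSpace] \to L^r(\varSigma;\AmbSpace)$ for every $r \in \nintervaloo{1,\infty}$, in particular for $r = p$ and for its H\"older conjugate $r = q \in \nintervaloo{1,2}$ (injectivity for connected $\varSigma$ with nontrivial boundary comes from the maximum principle). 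Dualising the isomorphism at exponent $q$ and identifying $(L^q)'$ with $L^p$ via the Radon--Nikodym theorem, using the $\vol_f$-weighted pairing exactly as in \autoref{ex:LpspacesasUnnamedSpace}, shows that $S_f$ is itself an isomorphism, so $\RJ[E]\at_f$ is too. The same argument at $r = 2$ produces the Riesz isomorphism $\RI[E]\at_f$.

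The main obstacle is this simultaneous elliptic regularity step at $p$ and at $q$ under only $W^{1,p}$-regularity of the coefficients and $C^{1,1}$-regularity of $\bnd\varSigma$; once in place, smoothness of the full bundle structure --- and in particular smoothness of $f \mapsto \RJ[E]\at_f^{-1}$ --- follows from openness of the isomorphism set in $L(\RX[E][f];\RY[E][f][\dual])$ together with the smooth dependence of $\Delta_f$ and $\vol_f$ on $f$ already secured in the second paragraph. This gives $(\cC,E)$ the structure of a smooth \UnnamedManifold.
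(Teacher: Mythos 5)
Your architecture coincides with the paper's: build the Banach manifold structure of $\cC$ from a bounded right inverse of the trace operator and the openness of the immersion condition, then reduce invertibility of $\RJ[E]\at_f$ to invertibility of the Dirichlet Laplace--Beltrami operator on the three Sobolev scales combined with the $\vol_f$-weighted $L^p$--$L^q$ duality of \autoref{ex:LpspacesasUnnamedSpace}; your $S_f$ is exactly the composition $\RY[\Delta][f][\dual]\,\RJ[E_2]\at_f$ appearing in the paper's factorization, and your closing argument for smoothness of $f\mapsto\RJ[E]\at_f^{-1}$ via openness of the set of isomorphisms is the paper's as well. The gap sits precisely in the step you flag as the heart of the matter. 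You assert that, because $g_f^{ij}\in W^{1,p}\hookrightarrow C^{0,\alpha}$, classical Agmon--Douglis--Nirenberg/Calder\'on--Zygmund theory yields that $\Delta_f\colon \soboo{2}{r}[\varSigma][\AmbSpace]\to L^r(\varSigma;\AmbSpace)$ is an isomorphism for \emph{every} $r\in\nintervaloo{1,\infty}$. This is not available here: in nondivergence form the operator is $a^{ij}\pd_i\pd_j u+\beta^j\pd_j u$, where $\beta^j$ involves first derivatives of the metric and hence lies only in $L^p$. For $r>p$ the drift term $\beta^j\pd_j u$ need not even belong to $L^r$, so the claimed isomorphism fails in that range; and even for the exponents you actually use ($q$, $2$, $p$, all in $\nintervaloc{1,p}$), the standard theorems (e.g.\ Gilbarg--Trudinger, Theorem 9.15) assume \emph{bounded} lower-order coefficients and do not apply directly. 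Supplying this missing step is the entire purpose of the paper's appendix: \autoref{theo:Lrellipticregularity} runs a Sobolev--H\"older bootstrapping argument that absorbs the $L^p$ drift into the right-hand side, \autoref{lem:closedrange} provides the resulting a priori bound, and \autoref{lem:Laplaceinvertible} delivers the isomorphism for $r\in\nintervaloc{1,p}$ --- the range the main proof then quotes.

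A smaller related point: your appeal to the maximum principle for injectivity at the exponent $q<2$ is premature, since the strong maximum principle for strong solutions requires more regularity than $W^{2,q}$ when $q<n$; the paper instead first bootstraps a $\Delta_g$-harmonic function up to $W^{2,2}$ by elliptic regularity and then kills it with the energy identity \eqref{eq:selftest}. Apart from these two issues --- both concentrated in the elliptic solvability step --- your proposal reproduces the paper's proof.
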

\begin{proof}
First observe that $\cC$ is a Banach manifold as the boundary trace mapping has a contiuous right inverse.
Fix $f \in \cC$. Observe that $\RT[\ConfSpace][f] = \RX[E][f]$ and $\Rj[E]\dual \, \RJ[E] = \RI[E] \, \Ri[E]$.
Moreover, note that the induced Riemannian metric $g \ceq f^\pull \AmbMetric$ is of class $\sobo{1}{p}$ and that $\Delta_f = \Delta_{g}$.
Thus, \autoref{lem:Laplaceinvertible} (for $k=0$) shows that $\Delta_f$ induces isomorphisms $\RX[\Delta][f] \colon \RX[E][f] \to L^p(\varSigma;\AmbSpace)$, $\RH[\Delta][f] \colon \RH[E][f] \to L^2(\varSigma;\AmbSpace)$, and $\RY[\Delta][f] \colon  \RY[E][f] \to L^q(\varSigma;\AmbSpace)$.
Similarly as in \autoref{ex:LpspacesasUnnamedSpace}, the triple $E_2\at_f \ceq (L^p(\varSigma;\AmbSpace),L^2(\varSigma;\AmbSpace),L^q(\varSigma;\AmbSpace))$ together with canonical inclusions and with the pairings 
\begin{align*}
	\ninnerprod{\RI[E_2]\at_f \, v_1 , v_2} \ceq \int_\varSigma \ninnerprod{v_1,v_2}_\AmbSpace \, \vol_f
	\qand
	\ninnerprod{\RJ[E_2]\at_f \, u , w} \ceq \int_\varSigma \ninnerprod{u,w}_\AmbSpace \, \vol_f	
\end{align*}
forms a \UnnamedSpace. Thus, we may deduce that
$
 	\RI[E]\at_f = \RH[\Delta][f][\dual] \; \RI[E_2]\at_f \; \RH[\Delta][f]
$ and $
	\RJ[E]\at_f = \RY[\Delta][f][\dual] \; \RJ[E_2]\at_f \; \RY[\Delta][f] 	
$
are isomorphisms of Banach spaces. 
Finally, we note that all these structures depend smoothly on $\dd f$, $\dd f^\dagger$, $f^\pull \AmbMetric$, and $\Hess[f]$ which themselves depend smoothly on $f$ (see the proof of \autoref{prop:Hessian})
\end{proof}

\begin{proposition}\label{prop:W2pisparaRiemannian2}
Let $\varSigma$ be a connected, $n$-dimensional, compact manifold without boundary, let $p \in \nintervaloo{n,\infty}\cap\nintervalco{2,\infty}$, and let 
$q$ be the H\"{o}lder conjugate of $p$. Consider the \emph{barycenter mapping} 
\begin{align*}
		\varPsi \colon \Imm^{2,p}(\varSigma;\AmbSpace) \to \AmbSpace, 
	\qquad
	\varPsi(f) \ceq \textstyle \Big( \int_\varSigma \vol_f \Big)^{-1}\Big(\int_\varSigma f\, \vol_f\Big) .
\end{align*}
Fix a $y_0 \in \AmbSpace$ and put $\ConfSpace \ceq \varPsi^{-1}(y_0)$.
For each $f \in \Imm^{2,p}(\varSigma;\AmbSpace)$, the tangent map $\RT[A][f]\ceq\RT[\varPsi][f]$ induces a continuous linear chain map 
\begin{equation*}
\begin{tikzcd}[]
	\sobo{2}{p}[\varSigma][\AmbSpace]
		\ar[d,hook, two heads]
		\ar[rr, "{\RX[A][f]}"]
	&&\AmbSpace
		\ar[d,hook, two heads]	
	\\
	\sobo{2}{2}[\varSigma][\AmbSpace]
		\ar[d,hook, two heads]	
		\ar[rr, "{\RH[A][f]}"]	
	&&\AmbSpace
		\ar[d,hook, two heads]		
	\\
	\sobo{2}{q}[\varSigma][\AmbSpace]
		\ar[rr, "{\RY[A][f]}"]	
	&&\AmbSpace\nospaceperiod
\end{tikzcd}
\end{equation*}
Then $E \ceq \ker(A)\at_\ConfSpace$ together with the Riesz isomorphisms as in \eqref{eq:RieszI_Imm} and \eqref{eq:RieszJ_Imm} forms a \UnnamedBundle and $(\ConfSpace,E)$
is a smooth \UnnamedManifold.
\end{proposition}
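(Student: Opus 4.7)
The plan is to mirror the proof of \autoref{prop:W2pisparaRiemannian}, with the barycenter constraint $\varPsi^{-1}(y_0)$ now playing the role that the Dirichlet boundary condition did in the boundary case: eliminating the kernel of $\Delta_f$, which in the closed case consists of the constant $\AmbSpace$-valued functions. Note that \autoref{theo:SubmersionSubmanifold} cannot be invoked directly here, since the ambient $\Imm^{2,p}(\varSigma;\AmbSpace)$ does not support a \UnnamedManifold structure with respect to the pairings \eqref{eq:RieszI_Imm} and \eqref{eq:RieszJ_Imm}: the bilinear form $b_1$ is only positive definite after factoring out constants.

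First I would verify that $\varPsi$ is smooth as a map of Banach manifolds---this follows from smooth dependence of $f \mapsto \vol_f$ exactly as in the proof of \autoref{prop:Hessian}---and that its linearization $\RT[\varPsi][f]$ is surjective with continuous right inverse $c \mapsto c$, identifying $\AmbSpace$ with the constant functions in each Sobolev scale. Indeed, a pure translation leaves $f^\pull \AmbMetric$ and hence $\vol_f$ invariant, so $\varPsi(f+tc) = \varPsi(f)+tc$ and $\RT[\varPsi][f](c)=c$. This continuous right inverse induces a continuous projector $P_f u \ceq u - \RT[\varPsi][f](u)$ onto $\ker(\RZ[A][f])$ on each of the three scales $\RZ \in \{\RX,\RH,\RY\}$, so $E = \ker(A)\at_\cC$ is a split Banach subbundle. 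The implicit function theorem then gives that $\cC$ is a Banach submanifold with tangent bundle $\RX[E]$, and density of the injections $\Ri[E]$, $\Rj[E]$ is obtained by applying $P_f$ to approximating sequences in the ambient Sobolev spaces.

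To equip each fiber $E\at_f$ with the required Riesz isomorphisms, I would reduce to the \UnnamedSpace $E_2\at_f = (L^p_0, L^2_0, L^q_0)$ of $\vol_f$-mean-zero functions, which is a mean-zero variant of \autoref{ex:LpspacesasUnnamedSpace} (the Radon-Nikodym argument restricts cleanly since the annihilators of these codimension-$m$ subspaces under the integral pairing are precisely the constants). The Laplace-Beltrami operator $\Delta_f$ with $\sobo{1}{p}$-coefficients provides a topological isomorphism $\RZ[E][f] \to L^r_0$ for $\RZ \in \{\RX,\RH,\RY\}$ and the matching $r \in \{p,2,q\}$: injectivity holds because any constant $c \in \AmbSpace \cap \RZ[E][f]$ satisfies $c = \RT[\varPsi][f](c) = 0$; the image lies in the mean-zero subspace by integration by parts on a closed manifold; surjectivity onto that subspace is $L^r$-elliptic regularity in the Calder\'on-Zygmund form, i.e., the analogue of \autoref{lem:Laplaceinvertible} already used in \autoref{prop:W2pisparaRiemannian}. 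The Riesz maps then factor as compositions of isomorphisms, and commutativity of the Riesz square for $E\at_f$ reduces to that for $E_2\at_f$.

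Smooth $f$-dependence of the resulting structures follows from smooth dependence of $\dd f$, $\dd f^\dagger$, $\Hess_f$, $\vol_f$, $\varPsi$, and $P_f$ on $f$ (as in the proof of \autoref{prop:Hessian}), so $E$ is a smooth \UnnamedBundle and $(\cC,E)$ is a smooth \UnnamedManifold. I expect the main technical obstacle to be the $L^r$-elliptic regularity for the Laplace-Beltrami operator with merely $\sobo{1}{p}$-coefficients on a closed manifold, which is handled by the referenced \autoref{lem:Laplaceinvertible}; the rest is bookkeeping that parallels the boundary case.
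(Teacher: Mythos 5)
Your proposal is correct and follows essentially the same route as the paper: the constant-function right inverse of $\RT[\varPsi][f]$ (extended to all three Sobolev scales via \autoref{lem:regularityofproducts}), the induced projector making $\ker(A)$ a smooth split subbundle, and the invertibility of $\Delta_f$ on the mean-zero $L^r$-scales (\autoref{lem:Laplaceinvertible}/\autoref{theo:Fredholm}) to establish the Riesz isomorphisms. In fact you supply exactly the details that the paper's proof explicitly leaves to the reader, mirroring the factorization through the $L^r$-level \UnnamedSpace used in the proof of \autoref{prop:W2pisparaRiemannian}.
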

\begin{proof}
For $u \in \sobo{2}{p}[\varSigma][\AmbSpace]$, the differential of $\varPsi$ in direction $u$ is given by
\begin{align*}
	\RT[\varPsi][f] \, u = \Big( \int_\varSigma \vol_f \Big)^{-1} \Big(\int_\varSigma u \, \vol_f \Big)
	- \Big( \int_\varSigma \vol_f \Big)^{-2} \Big(\int_\varSigma f \, \ninnerprod{\dd f , \dd u}_f \, \vol_f \Big).
\end{align*}
With the help of \autoref{lem:regularityofproducts}, one can show that $\RT[\varPsi][f]$ can be continuously extended to $\sobo{2}{2}[\varSigma][\AmbSpace]$ and $\sobo{2}{q}[\varSigma][\AmbSpace]$. Moreover, a continuous right-inverse  $B$ of $A$ is readily constructed by
$
	\RZ[B][f]\, V = \paren{x \mapsto V}
$
for each $V \in \AmbSpace$ and each $\RZ \in \set{\RX,\RH,\RY}$. Thus, $\cC$ is a smooth Banach manifold, $\ker(A)$ is a smooth vector bundle, and so is $E = \ker(A)\at_\ConfSpace$. That \eqref{eq:RieszI_Imm} and \eqref{eq:RieszJ_Imm} are isomorphism follows from \autoref{theo:Fredholm}, but we leave the details to the reader.
\end{proof}

\begin{figure}
\capstart
\newcommand{\settrimming}{%
    \setkeys{Gin}{%
        trim = 0 0 0 0 , clip=true, 
        width=0.32\textwidth
    }
    \presetkeys{Gin}{clip}{}
}
\settrimming
\begin{center}
\includegraphics{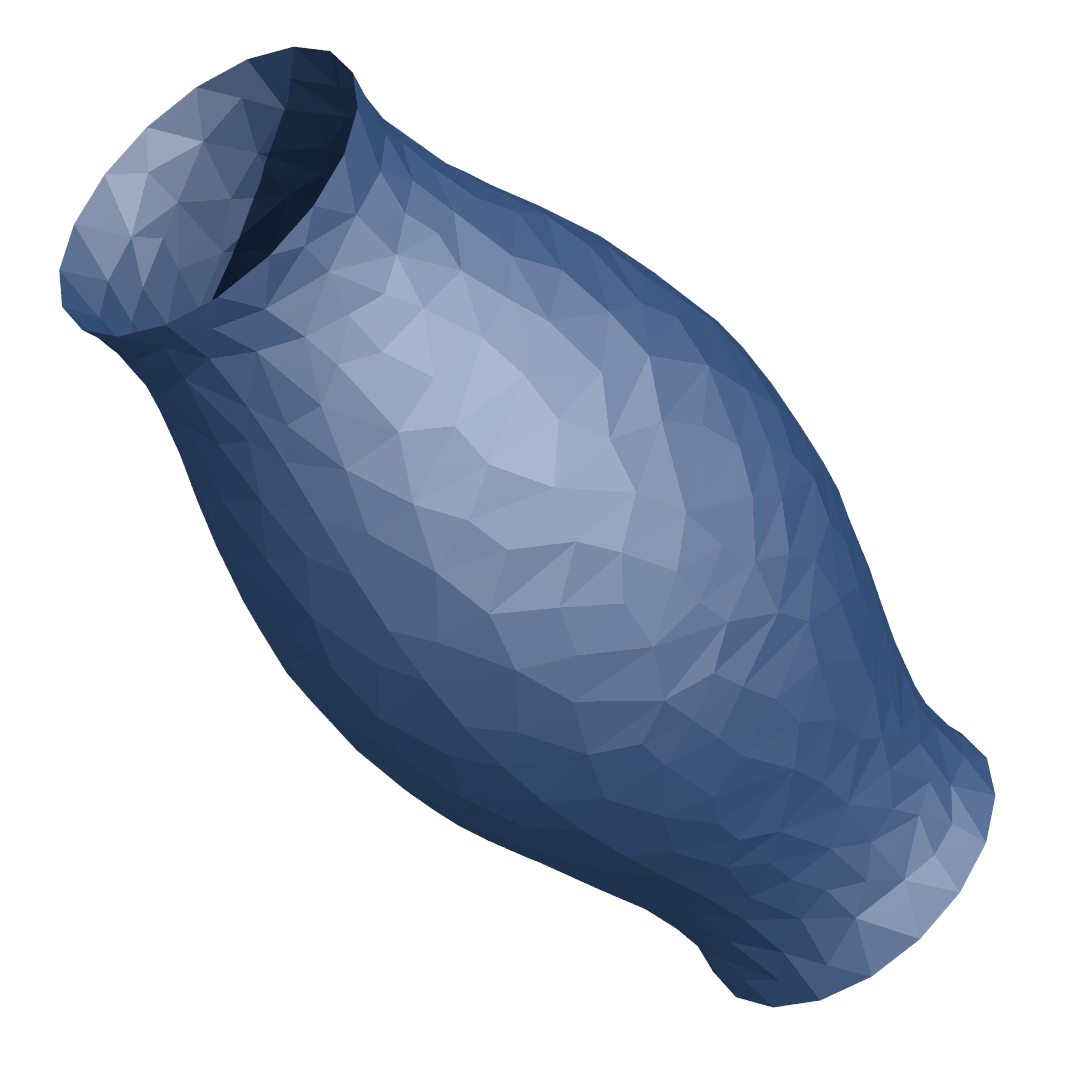}
\includegraphics{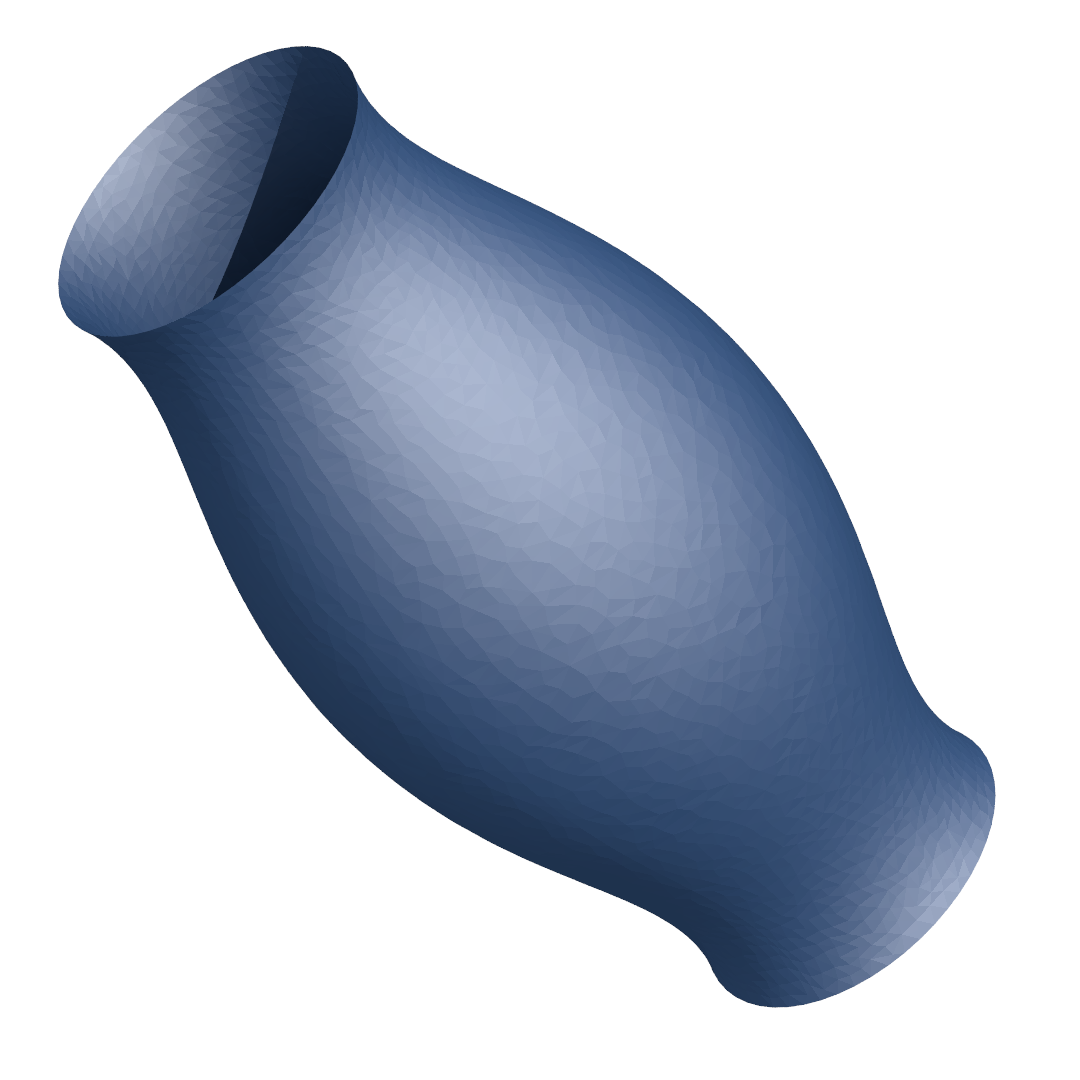}
\includegraphics{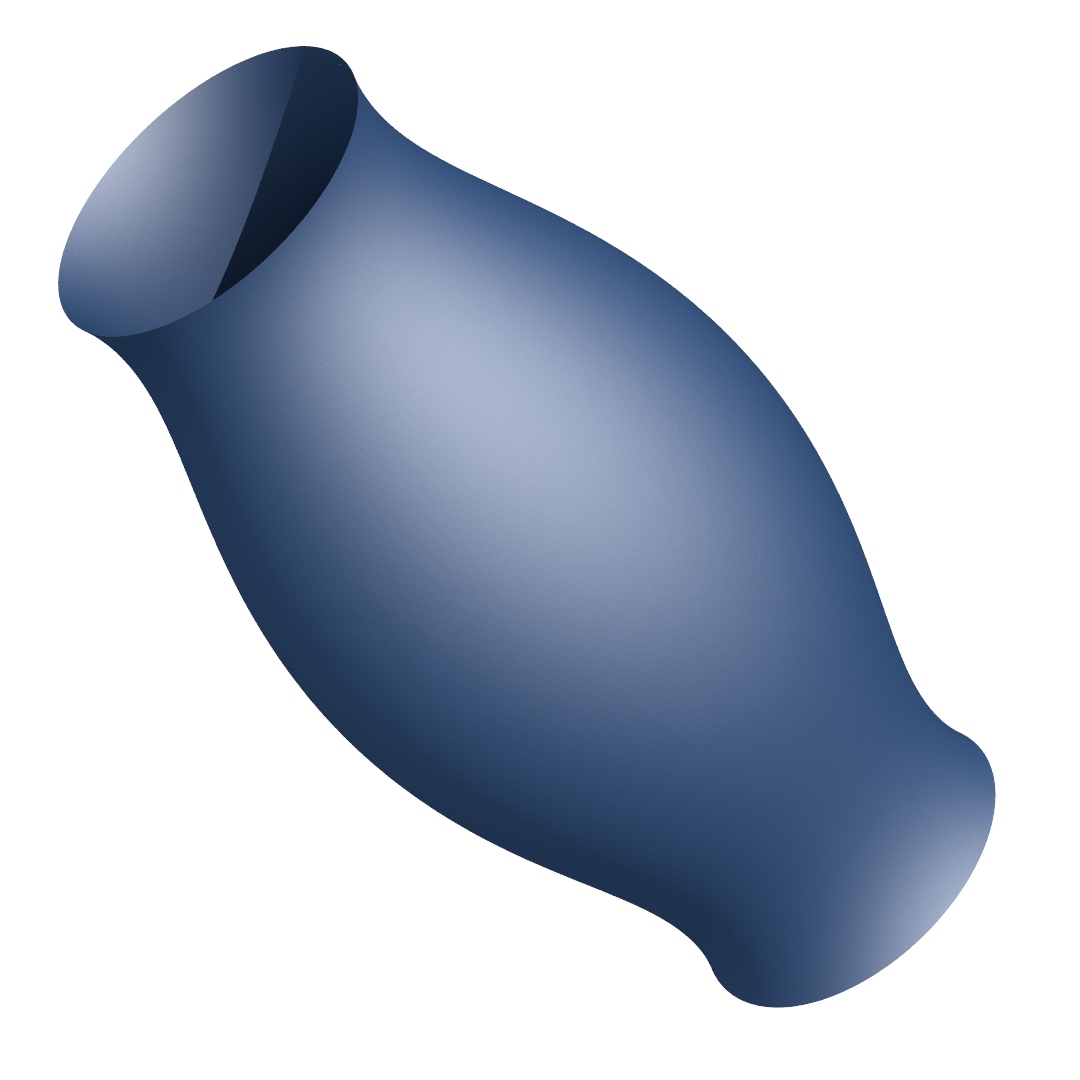}
\caption{The same as in \autoref{fig:Cylinders1} but with Dirichlet boundary conditions that prohibit the existence of minimal surfaces.
Observe the counterintuitive outward bending.
}
\label{fig:Cylinders2}
\end{center}
\end{figure}

\begin{theorem}\label{theo:elasticafunctionalisRiesz}
Let $\ConfSpace$ be one of the  \UnnamedManifolds defined in \autoref{prop:W2pisparaRiemannian} and \autoref{prop:W2pisparaRiemannian2}.
Then the elastica functional $\ElasticEnergy$ and the Willmore energy $\Willmore$ are smooth morphisms on $\ConfSpace$.
\end{theorem}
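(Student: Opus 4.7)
The plan is to verify first that $\cF$ and $\cW$ are smooth as scalar functions on the Banach manifold underlying $\cC$, and then to produce the bundle chain-map required by \autoref{dfn:Rieszmanifold}. Because the target $\R$ carries the trivial \UnnamedSpace structure with all three scales equal to $\R$, the chain-map condition reduces to exhibiting, for each $F \in \{\cF,\cW\}$ and each $f\in\cC$, a continuous extension of $\dd F\at_f$ from the tangent space $\RX[E][f]$ to the weaker space $\RY[E][f]$ that depends smoothly on $f$.

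Scalar smoothness follows from \autoref{prop:Hessian}: $\II(f)=\Hess[f](f)$ is a smooth map into $L^p(\varSigma;\Sym^2(T\varSigma;\AmbSpace))$, while the pullback metric $f^\pull\AmbMetric$ is smooth into $\sobo{1}{p}\hookrightarrow L^\infty$. Consequently $H(f)=n^{-1}\tr_{f^\pull\AmbMetric}\II(f)$ is smooth into $L^p(\varSigma;\AmbSpace)$ and $\vol_f$ is smooth into $L^\infty(\varSigma)$. Since $p\geq 2$, the squared pointwise norms lie in $L^{p/2}\hookrightarrow L^1$, and integrating against $\vol_f$ exhibits $\cF, \cW \in C^\infty(\cC;\R)$.

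For the extension step we treat $\cW$, the case of $\cF$ being analogous. By the chain rule,
\begin{align*}
	\dd \cW\at_f \cdot u
	= 2 \int_\varSigma \ninnerprod{H(f), \dd H\at_f \cdot u}_{\AmbSpace} \vol_f
	+ \int_\varSigma \nabs{H(f)}^2_{\AmbSpace}\, (\dd \vol\at_f \cdot u).
\end{align*}
Differentiating $H(f)=n^{-1}\tr_{f^\pull\AmbMetric}\Hess[f](f)$ via \eqref{eq:derivativeofMoorePenrose} shows that $\dd H\at_f\cdot u$ decomposes into a principal term $n^{-1}\tr_{f^\pull\AmbMetric}\Hess[f](u)$ with $L^\infty$ coefficients plus lower-order terms involving only $\dd u$ multiplied by $L^p$ factors; by \autoref{prop:Hessian} applied with exponent $q$ (noting $q \leq 2 \leq p$) and H\"older's inequality, one obtains $\|\dd H\at_f\cdot u\|_{L^q}\leq C(f)\|u\|_{\sobo{2}{q}[\varSigma][\AmbSpace]}$, using the Sobolev embedding $\sobo{1}{q}\hookrightarrow L^{p/(p-2)}$, whose validity relies on $p>n$. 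The H\"older pair $(p,q)$ then bounds the first integral by $C(f)\|H(f)\|_{L^p}\|u\|_{\sobo{2}{q}}$, and the second is controlled analogously using $\nabs{H(f)}^2\in L^{p/2}$ together with the same embedding applied to $\dd u$. The Dirichlet respectively barycenter constraint on $u$ is preserved by these operations, yielding the required continuous extension.

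Finally, smoothness of the extension in $f$ follows by applying the same H\"older and Sobolev estimates to the smoothly-$f$-dependent factors identified above, so the triple $(\RT[\cW],\RH[\cW],\RY[\cW])$ becomes a $C^\infty$ \UnnamedMorphism into the trivial \UnnamedBundle over $\R$. The main obstacle is the careful regularity bookkeeping in $\dd H\at_f\cdot u$: one must isolate the principal part with $L^\infty$ coefficients via \eqref{eq:derivativeofMoorePenrose} and then dominate the remaining factors by Sobolev multiplier inequalities of exactly the kind already exploited in the proof of \autoref{prop:W2pisparaRiemannian}.
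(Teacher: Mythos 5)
Your proposal is correct and follows essentially the same route as the paper: isolate the principal term $\Hess[f](u)$ (controlled in $L^q$ by \autoref{prop:Hessian} with exponent $q$ and paired with the $L^p$-curvature term via H\"older), and dominate the lower-order terms, which are quadratic in $\II(f)\in L^p$ and linear in $\dd u$, using the embedding $\sobo{1}{q}\hookrightarrow L^{(p/2)'}$ that holds precisely because $p>n$. The only cosmetic difference is that you treat $\Willmore$ directly through $H(f)$ and declare $\ElasticEnergy$ analogous, whereas the paper estimates $\dd\ElasticEnergy$ through $\II(f)$ and then obtains $\Willmore$ from the trace identity; the decomposition and the key inequalities are identical.
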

\begin{proof}
\invisible{\autoref{dfn:elasticafunctional},\autoref{prop:Hessian}}
Fix $f \in \Imm^{2,p}(\varSigma;\AmbSpace)$ and let $u \in \RX[\ConfSpace][f] \subset \sobo{2}{p}[\varSigma][\AmbSpace]$. 
With Equation \eqref{eq:derivativeofMoorePenrose}, the abbreviation $p \ceq \dd f \, \dd f^\dagger$ for the tangent projector, and the product rule, we obtain for any two smooth vector fields $X$ and $Y$ on $\varSigma$:
\begin{align*}
	\MoveEqLeft
	D\bigparen{f\mapsto \Hess[f](f)(X,Y)} \, u 
	\\
	&= (1-p) \cdot \bigparen{\dd(\dd u \, \dd f^\dagger)\,X} \cdot (\dd f \,Y)
	+ (\dd p \,X) \cdot \dd u  \cdot (\id_{T\varSigma} - \dd f^\dagger \, \dd f)\,Y
	\\
	&\qquad	
	+ \bigparen{\dd (p \, (\dd u \, \dd f^\dagger)^*)\,X} \cdot (\id_\AmbSpace - p)	) \cdot (\dd f \,Y)
	- p \cdot (\dd u \, \dd f^\dagger)^* \cdot \bigparen{\dd p \,X} \cdot (\dd f \,Y)	.
\end{align*}
By definition, we have $\bigparen{\dd(\dd u \, \dd f^\dagger)\,X} \cdot (\dd f \,Y) = \Hess[f](u)(X,Y)$ and $(\dd p \, X) \, (\dd f \, Y) = \II(f)(X,Y)$.
Moreover, we utilizing the identities $(\id_\AmbSpace - p) \cdot \dd f=0$,
$\id_{T\varSigma} - \dd f^\dagger \, \dd f=0$,
and
$p \cdot (\dd u \, \dd f^\dagger)^* = (\dd u \, \dd f^\dagger)^*$, we obtain
%
\begin{align*}
	D\II (f) \, u
	&= \bigparen{\id_{\AmbSpace} - \dd f \, \dd f^\dagger} \Hess[f](u) - (\dd u \, \dd f^\dagger)^* \II(f).
\end{align*}
The derivative of $f \mapsto \vol_{f^\pull \AmbMetric}$ in direction $u$ is given by 
$u= \ninnerprod{\dd f, \dd u}_{f^\pull \AmbMetric} \, \vol_{f^\pull \AmbMetric}$ and we have 
$\nabs{S}_{f^\pull \AmbMetric}^2
	= \nabs{S(\dd f^\dagger \,\cdot, \dd f^\dagger \, \cdot)}_{ \AmbMetric}^2$
for $S \in L^r(\varSigma;\Sym(T\varSigma;\AmbSpace))$.
This would allow us to compute a precise expression for $\ninnerprod{\dd \ElasticEnergy\at_f,u}$, but it already suffices for our considerations to observe that $\dd \ElasticEnergy$ is of the form
\begin{align*}
	\ninnerprod{\dd \ElasticEnergy\at_f,u}
	= 2 \, \int_\varSigma 
	\paren{
		\ninnerprod{\II(f), \Hess[f](u)}_{f^\pull \AmbMetric}
		+		
		\mu(\II(f),\dd f, \dd f^\dagger, \dd u)
	}\, \vol_{f},
\end{align*}
where $\mu(\II(f),\dd f, \dd f^\dagger, \dd u)$ is a polynomial expression in $\II(f)$, $\dd f$, $\dd f^\dagger$, and  $\dd u$ with constant coefficients. Moreover, $\II(f)$ occurs with order two and $\dd u$ occurs with order one in this expression.

Now let $w \in \RY[\ConfSpace][f] = \sobo{2}{q}[\varSigma][\AmbSpace]$.
Note that we have $\II(f) \in L^p$, $\dd f \in L^\infty$, and $\dd f^\dagger \in L^\infty$, hence $\mu(\II(f),\dd f, \dd f^\dagger, \cdot) \in L^{p/2}$.
In the case $n=1$, we have $\dd w \in \sobo{1}{q} \hookrightarrow L^{\infty}$.
For $n\geq 2$, we have $\dd w \in \sobo{1}{q} \hookrightarrow L^{r}$ with $r \geq \frac{p}{p-1-\frac{p}{n}}$.
Because of $p>n$, we obtain
\begin{align*}
	r \geq \frac{p}{p-1-\frac{p}{n}} > \frac{p}{p-2} = (p/2)',
\end{align*}
thus $\dd w \in L^{(p/2)'}$, where $(p/2)'$ denotes the H\"older conjugate of $p/2$.
In any case, we obtain $\mu(\II(f),\dd f, \dd f^\dagger, \dd w) \in L^1$. This shows that $ \RX[\ElasticEnergy][f] = \dd \ElasticEnergy\at_f$ can be continuously extended to $\RY[\ElasticEnergy][f] \colon \RY[\ConfSpace][f]\to \R$ so that $\Rj[\R] \, \Ri[\R] \, \RX[\ElasticEnergy][f] = \RY[\ElasticEnergy][f]\,\Rj[\ConfSpace]\,\Ri[\ConfSpace]$ holds. Hence, $\ElasticEnergy$ is a morphism of \UnnamedManifolds. The statement for $\Willmore$ follows from the identity $H(f) = \dim(\varSigma)^{-1}\sum_{i=0}^m \II(f)( \dd f^\dagger e_i ,\dd f^\dagger e_i)$ for any $\AmbMetric$-orthonormal basis $e_1,\dotsc,e_m$ of $\AmbSpace$ and from the above discussion.
\end{proof}
%

\section{Numerical Examples}\label{sec:NumericalExamples}

Of course, we are not the first to minimize the Willmore energy numerically. Usually, semi-implicit discretizations of Willmore-flow (the downward $L^2$-gradient flow) are applied; see. e.g.
\cite{hsu1992}, \cite{Bobenko05discretewillmore}, or \cite{MR2448203} for the Willmore flow applied to triangle meshes and \cite{MR2095338} for level set formulations.
A notable exception is \cite{Crane:2013:RFC}, where 
beautiful ideas from the conformal geometry of surfaces are applied to formulate an $L^2$-gradient-like descent in \emph{mean curvature space}.

All these approaches have in common that immersed surfaces are discretized by finite, immersed, polyhedral surfaces. One commits a severe variational crime this way: Since the Willmore energy is not defined for simplicial surfaces, one has to \emph{design} a discrete Willmore energy which \emph{hopefully} will produce minimizers that approach the minimizers of the (smooth) Willmore energy in a suitable topology as the mesh is suitably refined. 
Even so-called mixed formulations cannot cure this flaw, at least in the form they are usually applied to bi-harmonic problems.\footnote{
Instead of minimizing $\int_\varSigma \nabs{\dim(\varSigma)^{-1}\,\Delta_g f}^2 \, \vol_g$ among $f \in \sobo{2}{p}[\varSigma][\R^3]$, mixed formulations introduce an auxilliary variable $H \in \sobo{1}{p}[\varSigma][\R^3]$ and
minimize $\int_\varSigma \nabs{H}^2 \, \vol_g$ subject to the constraint $\dim(\varSigma) \, H - \Delta_g f =0$ in a suitable weak form. 
While this works out well for elliptic problems involving a \emph{fixed} Riemannian metric $g$ of class $\sobo{1}{p}$, $\dim(\varSigma)< p < \infty$ (see \cite{MR0483557} for an analysis of a domain $\varSigma \subset \R^2$ and with the Euclidean metric $g$), it breaks down when $g$ is induced by $f\in \sobo{1}{p}[\varSigma][\R^3] \setminus \sobo{1}{\infty}[\varSigma][\R^3]$ for the very same reasons as we outlined in the introduction.}

\begin{figure}
\capstart
\newcommand{\settrimming}{%
    \setkeys{Gin}{%
        trim = 10 80 10 100 , clip=true, 
        width=0.20\textwidth
    }
    \presetkeys{Gin}{clip}{}
}
\settrimming
\begin{center}
\begin{equation*}
\begin{tikzcd}[]
	\vcenter{\hbox{\includegraphics{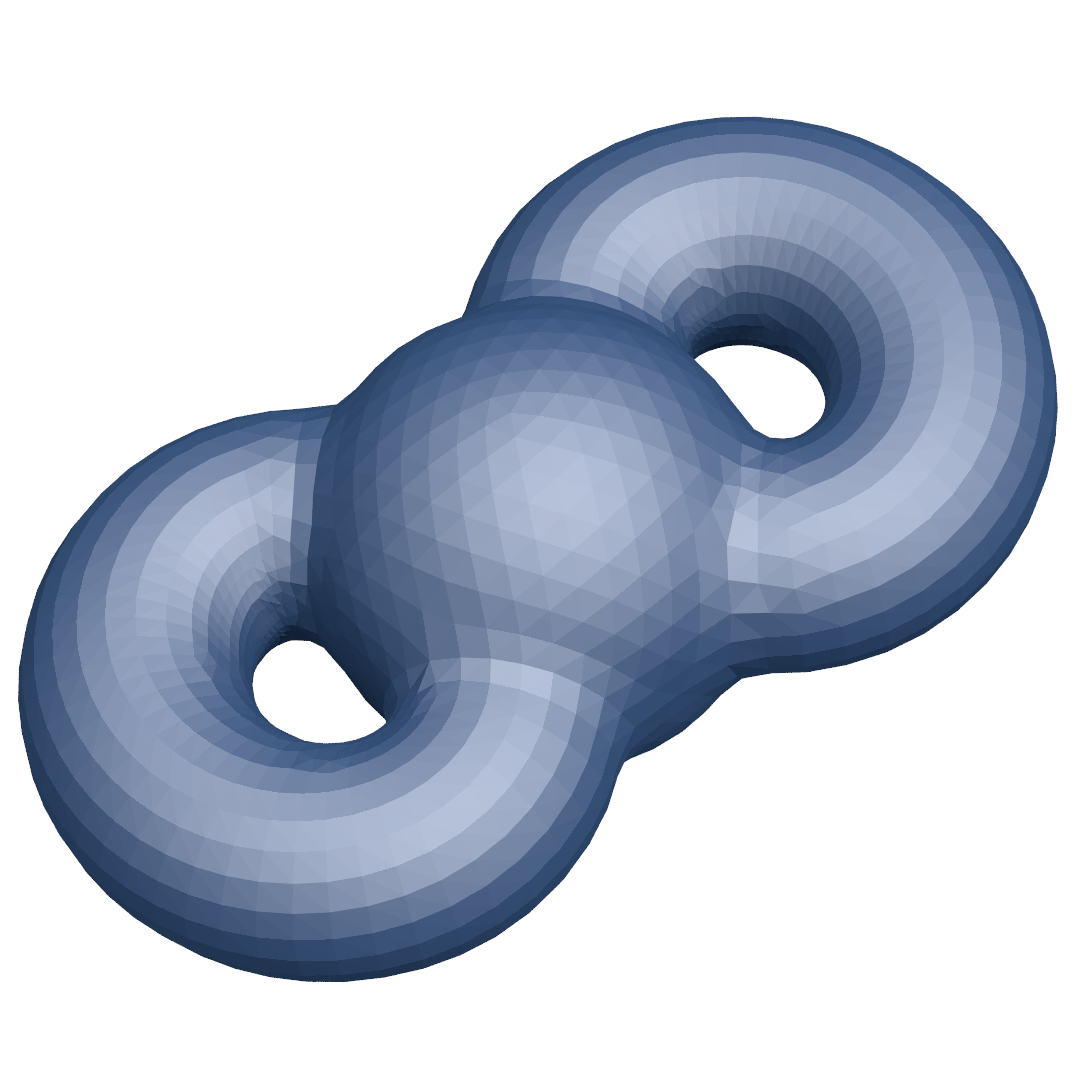}}}
		\ar[d, mapsto, "\text{Loop subdivision}"'] 
		\ar[r, mapsto, "\text{10 steps}"]
	& \vcenter{\hbox{\includegraphics{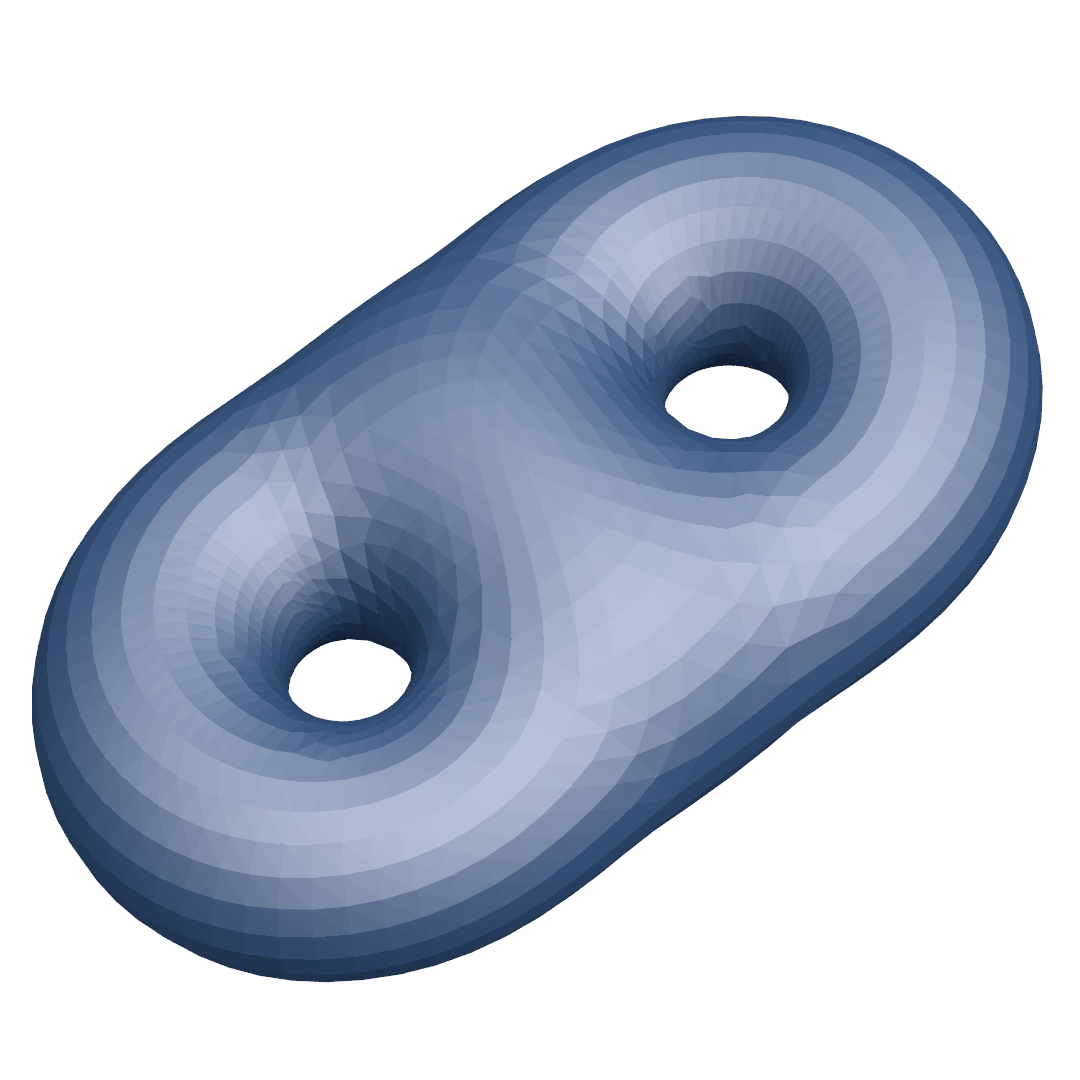}}}
		\ar[r, mapsto, "\text{50 steps}"]
	& \vcenter{\hbox{\includegraphics{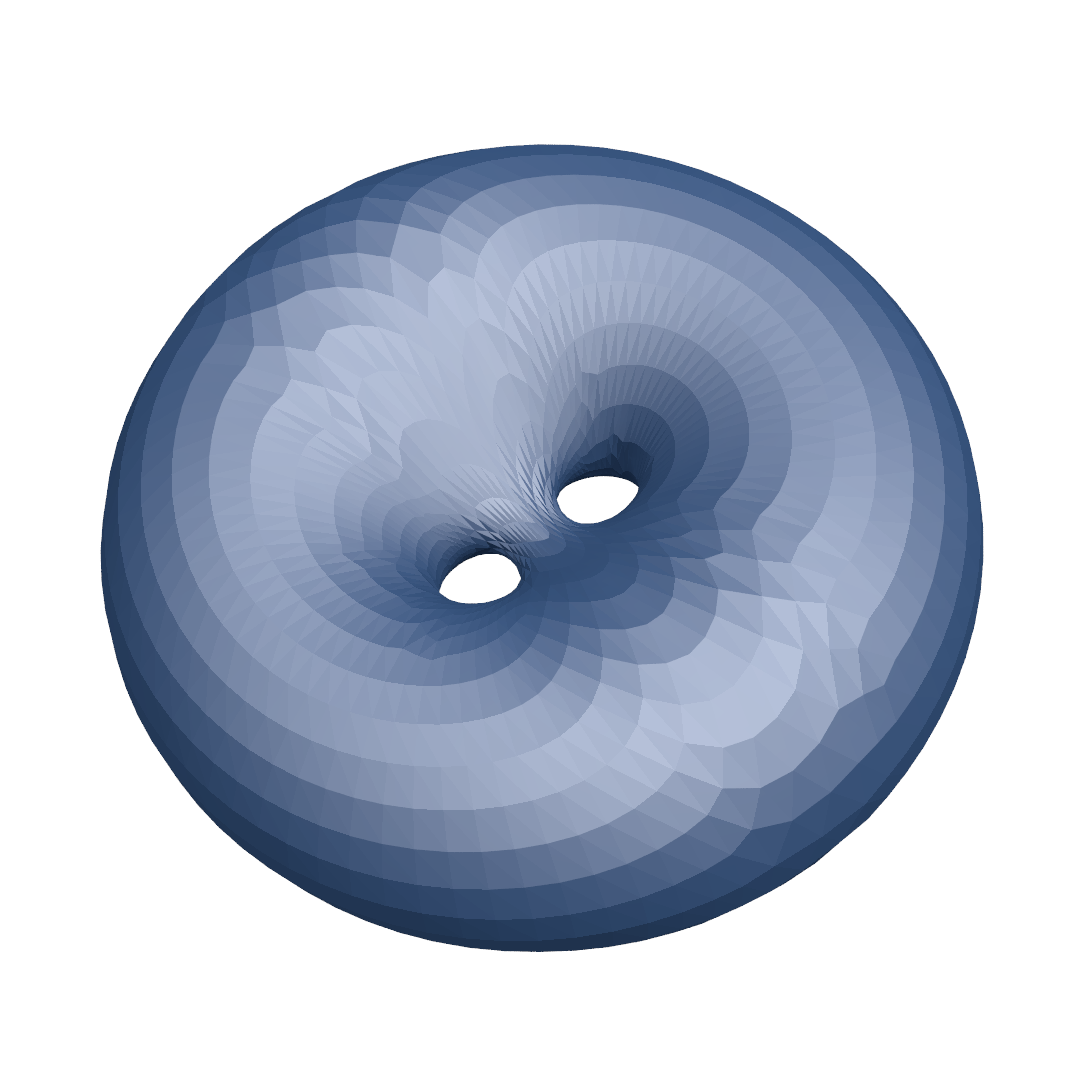}}}
		\\
	\vcenter{\hbox{\includegraphics{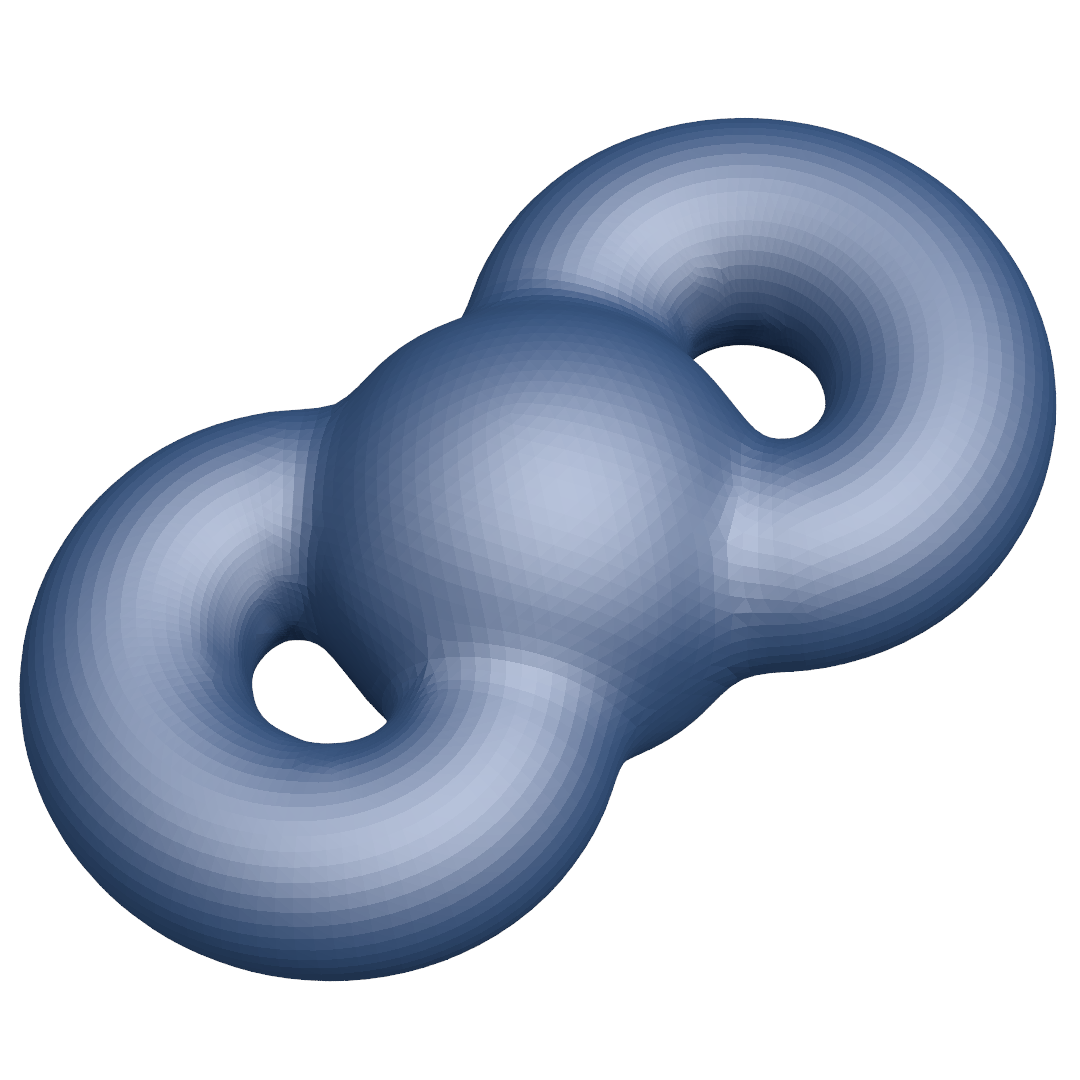}}}
		\ar[d, mapsto, "\text{Loop subdivision}"'] 
		\ar[r, mapsto, "\text{10 steps}"]
	& \vcenter{\hbox{\includegraphics{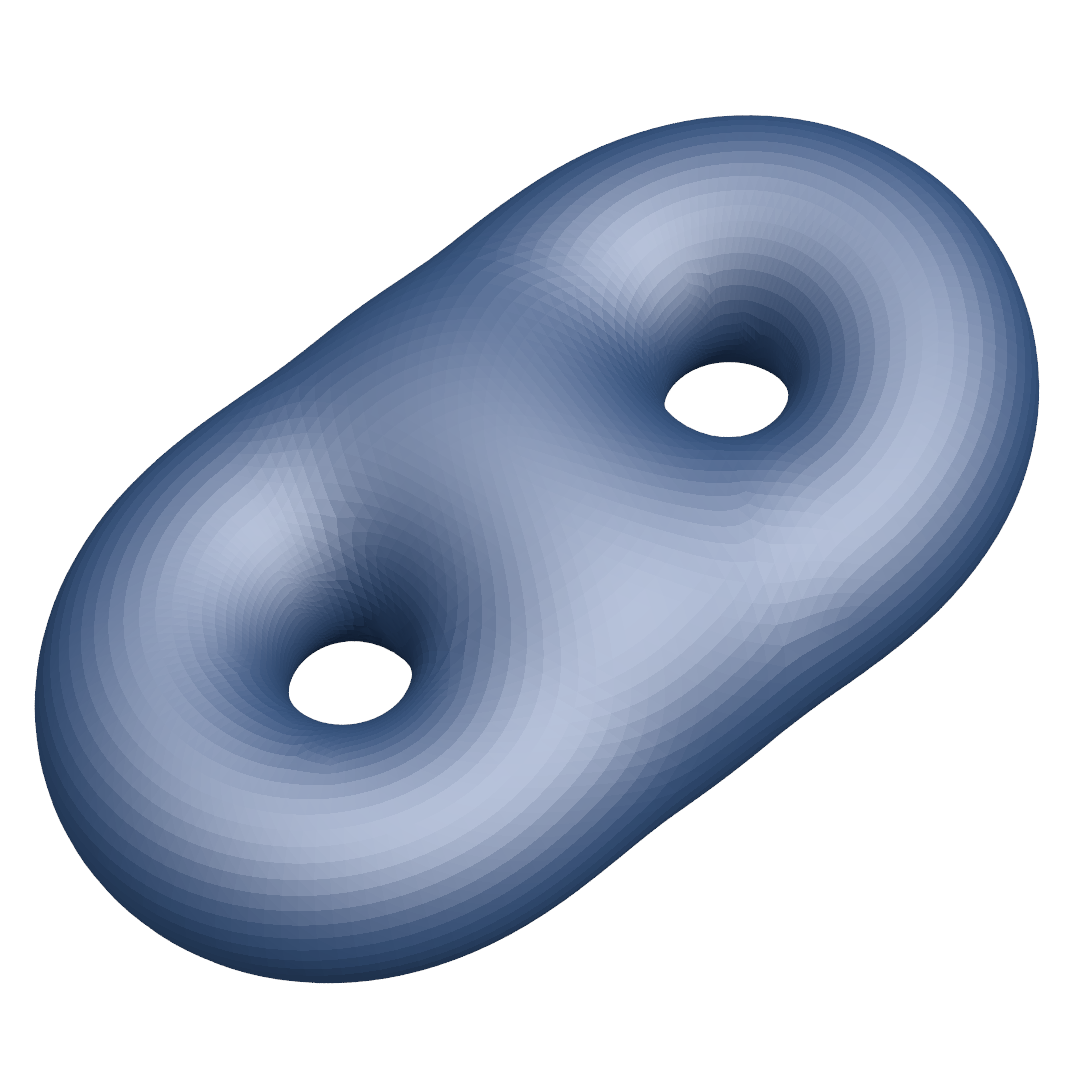}}}
		\ar[r, mapsto, "\text{50 steps}"]
	& \vcenter{\hbox{\includegraphics{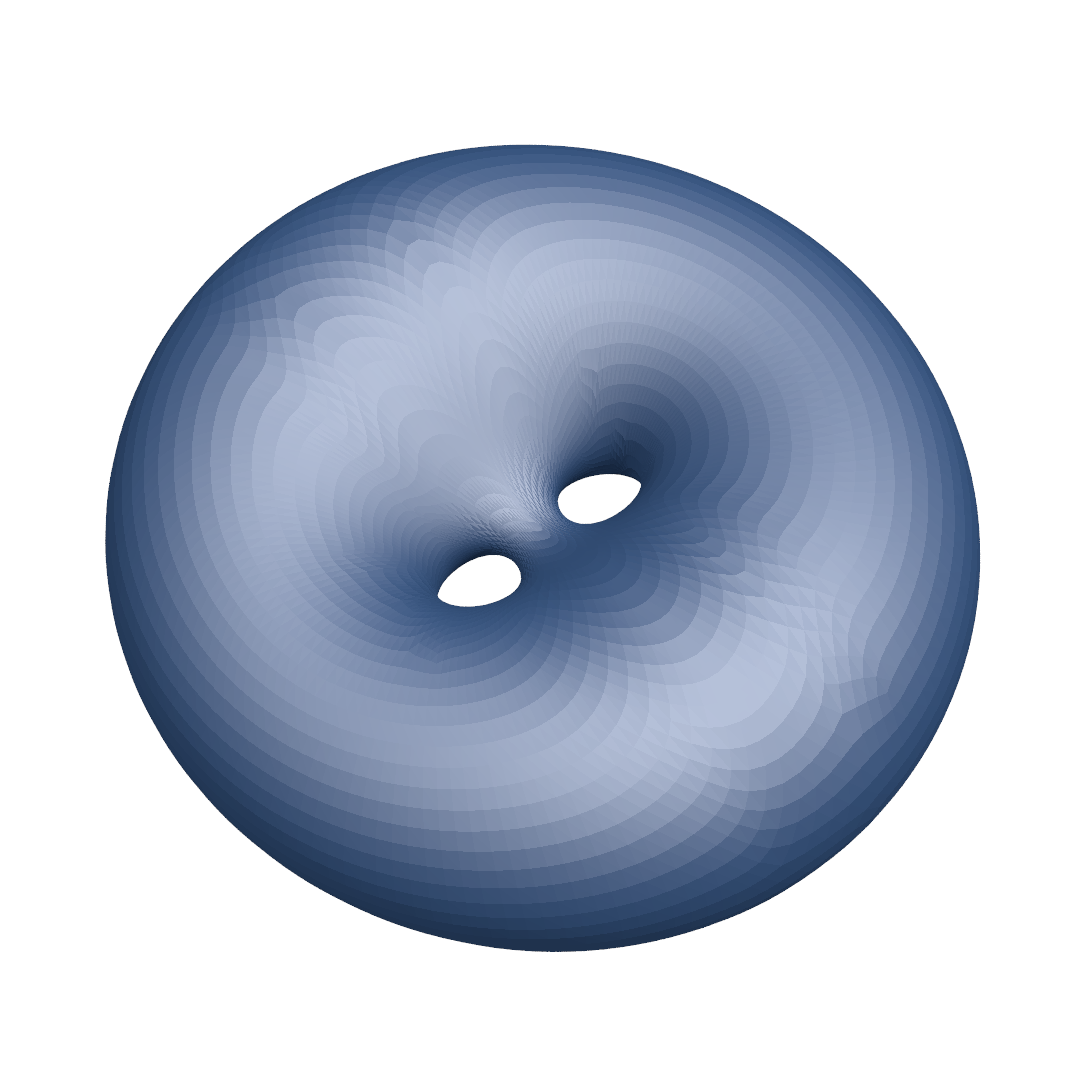}}}
	\\
	\vcenter{\hbox{\includegraphics{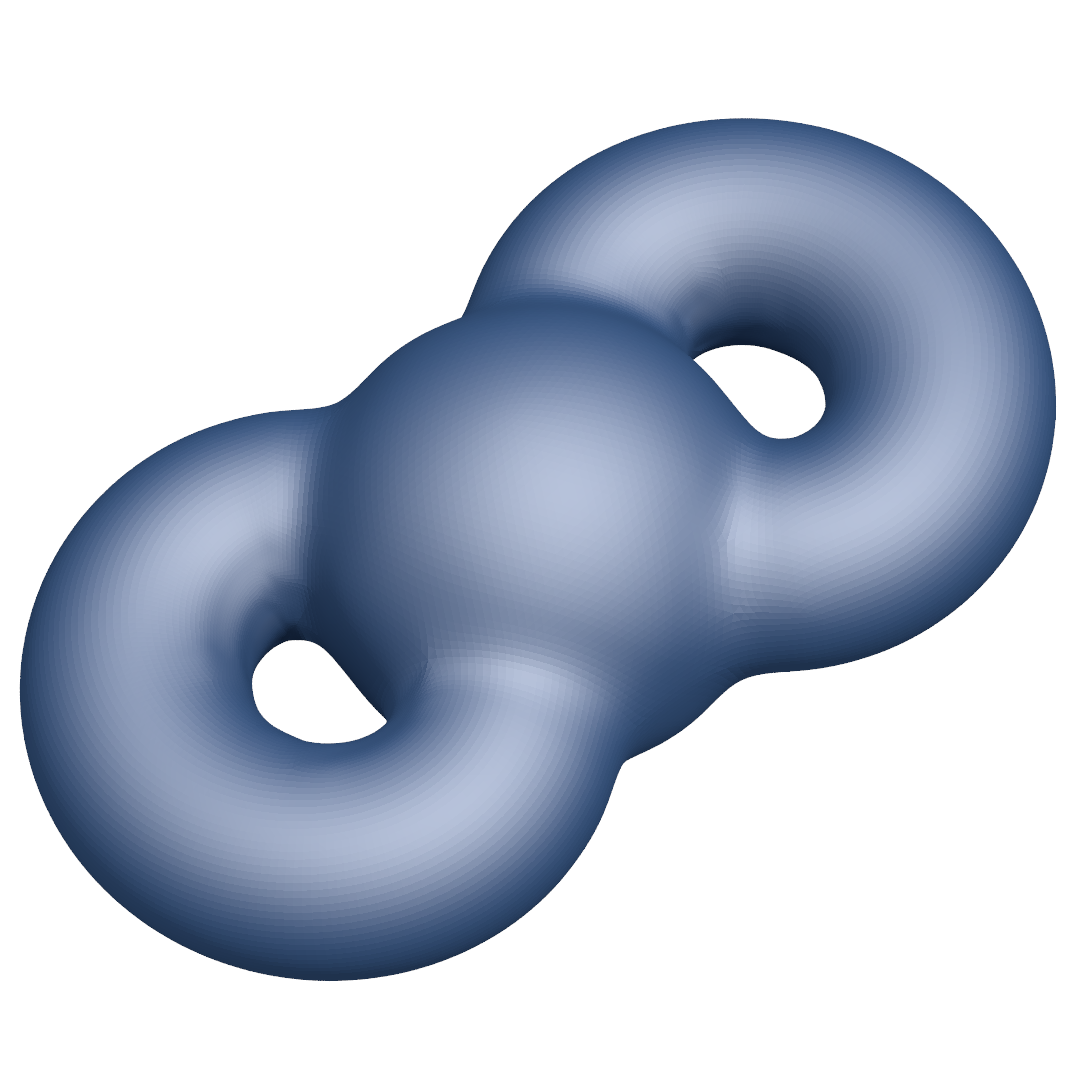}}}
		\ar[r, mapsto, "\text{10 steps}"]
	& \vcenter{\hbox{\includegraphics{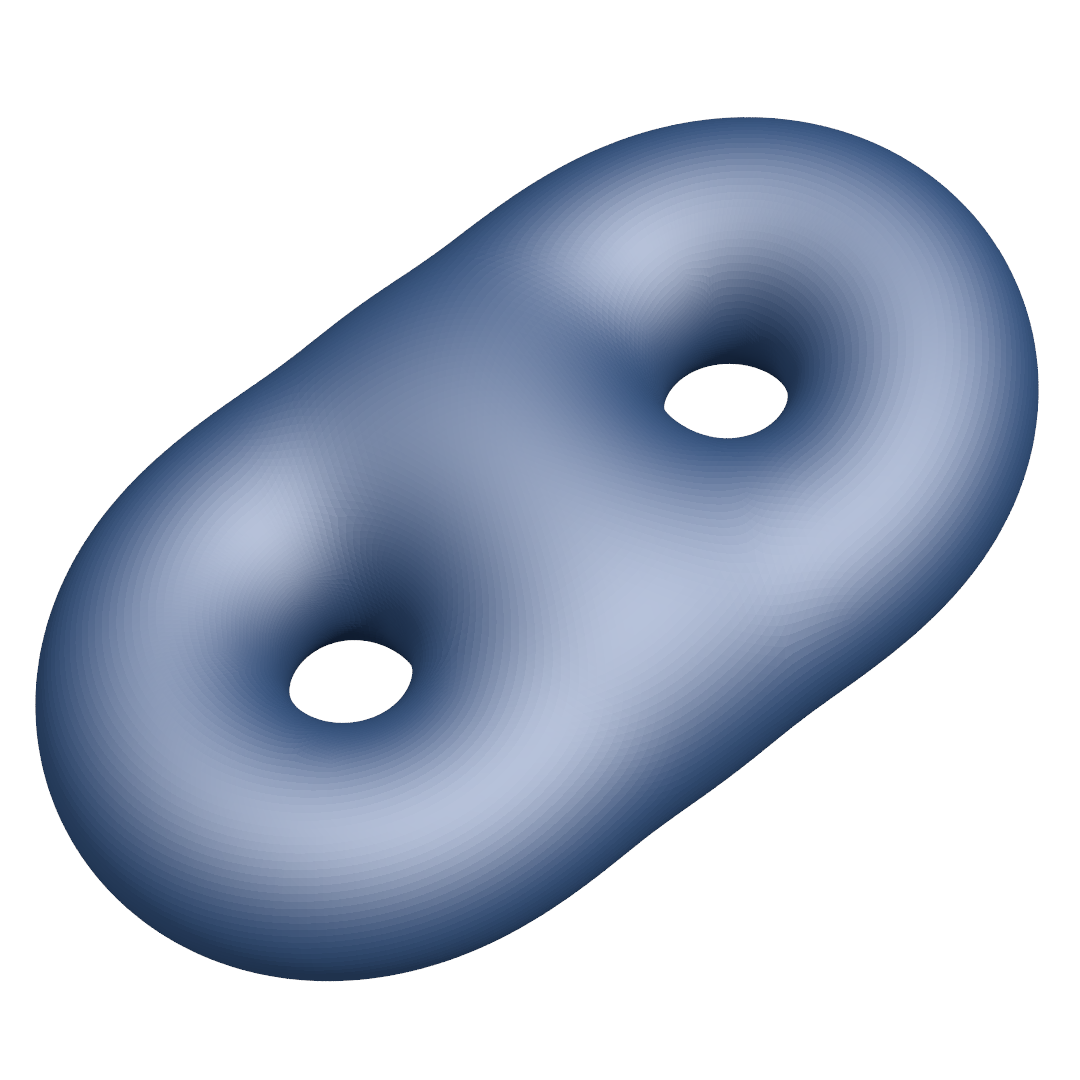}}}
		\ar[r, mapsto, "\text{50 steps}"]
	& \vcenter{\hbox{\includegraphics{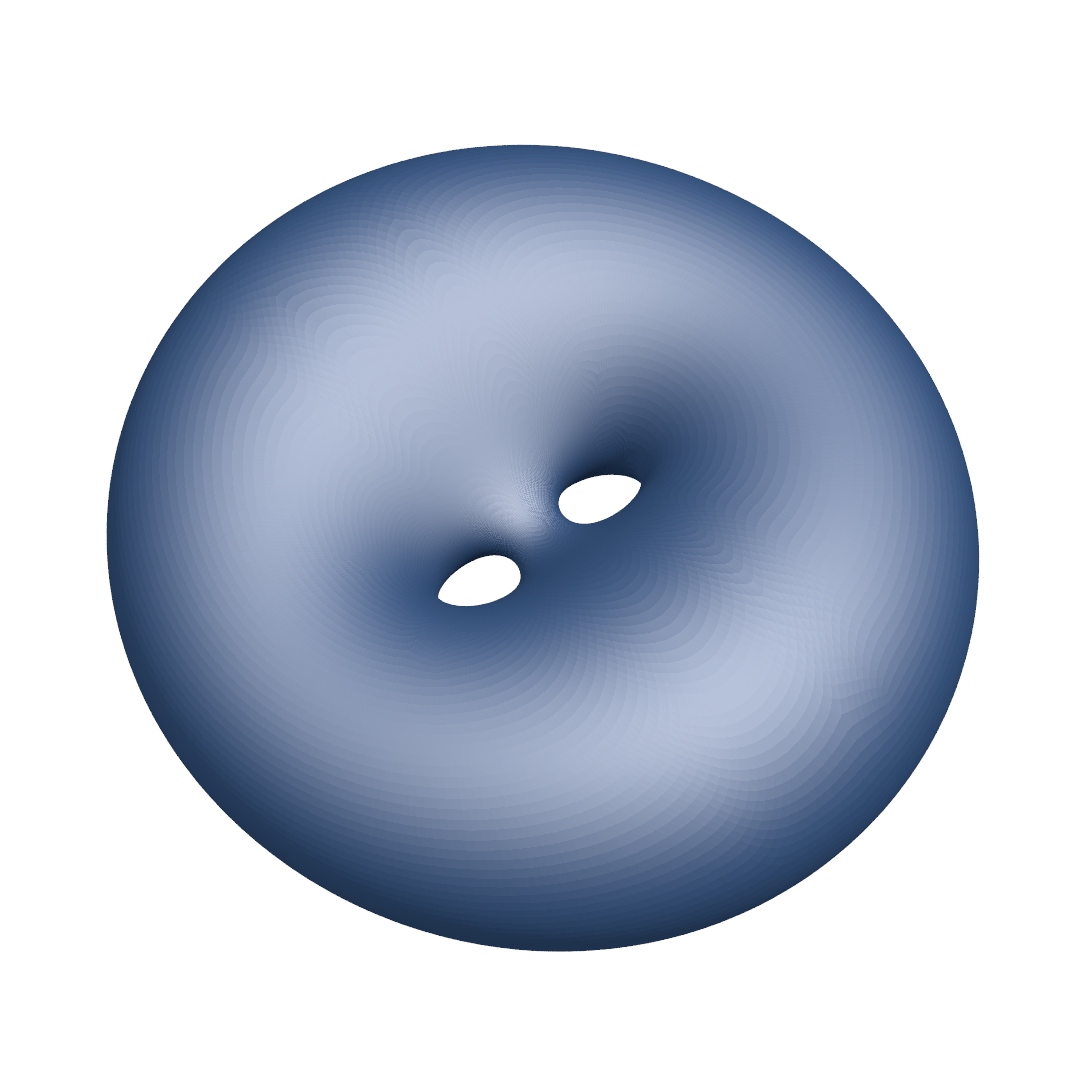}	}}
\end{tikzcd}
\end{equation*}
\caption{Rows show some iterations of the discrete $H^2$-gradient descent of Willmore energy for different mesh resolutions. Surfaces in first column originate from the top left by Loop subdivision. Note that gradient descent and subdivision almost commute.
}
\label{fig:HandleBody2}
\end{center}
\end{figure}
\begin{figure}
\capstart
\newcommand{\settrimming}{%
    \setkeys{Gin}{%
        trim = 0 0 0 0 , clip=true, 
        width=0.45\textwidth
    }
    \presetkeys{Gin}{clip}{}
}
\settrimming
\begin{center}
\includegraphics{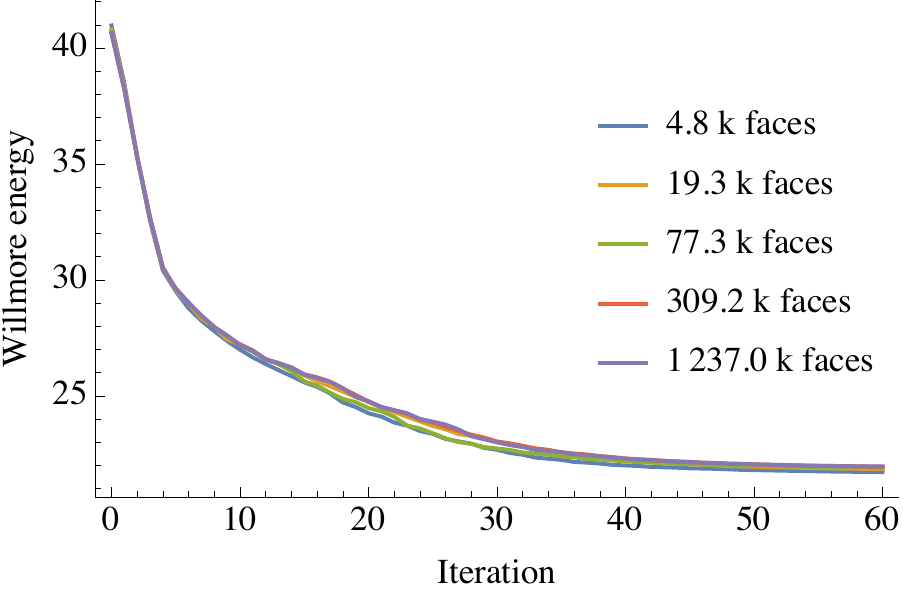}
\includegraphics{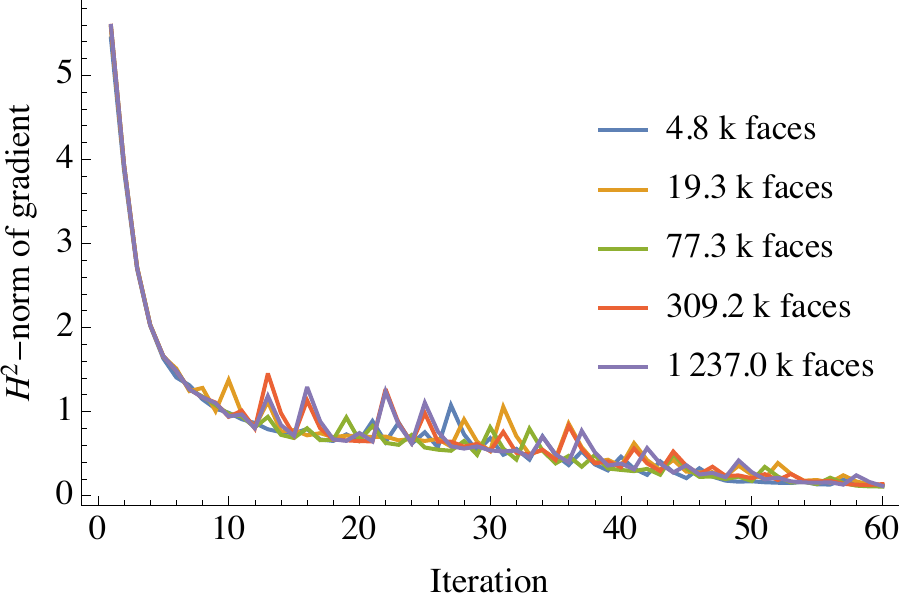}
\caption{The optimization histories of \autoref{fig:HandleBody2} and two further subdivisions show similar behavior for all mesh resolutions.
}	
\label{fig:Plots}
\end{center}
\end{figure}

Although the issue of convergence of these nonconforming Ritz-Galerkin methods has not been settled completely, the cited methods lead to very plausible results and some partial information on consistency is already  known (see \cite{MR2448203}).
If one is willing to restrict one's attention to surfaces that can be parameterized over compact domains in $\R^2$, $C^1$-finite elements such as Argyris or Bell elements for scalar functions can be adapted (as was done, e.g., in \cite{MR1825128} for graphs).
Constructing implementable and efficient finite elements with $C^1$ or $G^1$ continuity for arbitrary surfaces is harder than one would expect and is still a matter of contemporary research.

Here, we only opt for illustrating the effect of using $H^2$-gradient descent, 
and this can be done with each of the described methods.
For simplicity, we follow the nonconforming Ritz-Galerkin approach and use the following discretization: Let $\varSigma$ be a compact, two-dimensional manifold and fix a smooth triangulation $\cT$ with vertices $p_1,\dots,p_N \in \varSigma$. Such a triangulation induces barycentric coordinates on each simplex of the triangulation and thus a vector space $\on{PL}_\cT(\varSigma;\R^3) \subset \sobo{1}{\infty}[\varSigma][\R^3]$ of continuous, $\R^3$-valued, piecewise-linear functions. 
Define the discrete configuration space $\ConfSpace_\cT \ceq \set{f \in \on{PL}_\cT(\varSigma;\R^3) | \text{$f$ is a Lipschitz-immersion}}$ and observe
$\RT[\ConfSpace_\cT][f] = \on{PL}_\cT(\varSigma;\R^3)$.
Fix a basis $\varphi_1, \dotsc, \varphi_{3N}$ of $\on{PL}_\cT(\varSigma;\R^3)$ by 
\begin{align*}
	\varphi_{3(i-1)+k}(p_j) = \updelta_{ij} \, e_k
	\quad
	\text{for $i$, $j\in \set{1,\dotsc,N}$ and $k \in \{1,2,3\}$,}
\end{align*}
where $e_1$, $e_2$, $e_3$ is the standard basis in $\R^3$.
Each element $u \in \on{PL}_\cT(\varSigma;\R^3)$ is uniquely represented by the vector $\textbf{u} \in \R^{3 N}$ with entries $\mathbf{u}_{3(i-1)+k} = \ninnerprod{u(p_i),e_k}_{\R^3}$.
\begin{align*}
	\mathbf{M}_{ij}(f) \ceq 
	\int_\varSigma \ninnerprod{\varphi_i,\varphi_j} \, \vol_f
	\qand 
	\mathbf{L}_{ij}(f) \ceq 
	\int_\varSigma \ninnerprod{\dd \varphi_i,\dd \varphi_j}_f \, \vol_f
\end{align*}
In order to avoid inversion of the mass matrix $\mathbf{M}(f)$, we consider the ``inverse'' of the lumped mass matrix
\begin{align*}
	(\mathbf{\Lambda}_{\on{lump}})_{ij}(f) \ceq 
	\begin{cases}
		\updelta_{ij} \, \big(\int_\varSigma \nabs{\varphi_i}_{\R^3} \, \vol_f\big)^{-1}
			& \text{$i$ is an interior degree of freedom}
		\\
		0, & \text{else}.
	\end{cases}
\end{align*}
The entity $\frac{1}{2}\mathbf{\Lambda}_{\on{lump}}(f) \, \mathbf{L}(f) \, \mathbf{f}(f)$ is of often called \emph{discrete mean curvature vector} in the literature. 
We define the \emph{discrete $H^2$-Riesz isomorphism} and the \emph{discrete Willmore energy} by
\begin{align*}
	\mathbf{J}(f) = 
		\mathbf{L}(f)\transp \, \mathbf{\Lambda}_{\on{lump}}(f) \, \mathbf{L}(f)
	\qand
	\Willmore_\cT(f) \ceq \frac{1}{4} \,
	\mathbf{f}(f)\transp \, \mathbf{J}(f) \, \mathbf{f}(f)
	\quad
	\text{for $f \in \ConfSpace_\cT$.}
\end{align*}
Let $\varPhi_\cT \colon \ConfSpace_\cT \to \R^K$ be a discretized constraint mapping whose differential $D\varPhi_\cT(f)$  is represented by a matrix $\mathbf{A} \in \Hom(\R^{3N};\R^K)$. By \autoref{prop:ProjectedGradient}, we can determine the coordinates $\mathbf{u}$ of the discrete, projected downward gradient $u$, by solving the linear saddle point system
\begin{align}
	\begin{pmatrix}
		\mathbf{J}(f) & \mathbf{A}(f)\transp\\
		\mathbf{A}(f) & \mathbf{0}
	\end{pmatrix}
	\begin{pmatrix}
		\mathbf{u}\\
		\mathbf{\lambda}
	\end{pmatrix}	
	=
	\begin{pmatrix}
		\mathbf{b}\\
		\mathbf{0}
	\end{pmatrix}
	\qquad
	\text{where $\mathbf{b}_{i} \ceq - D \Willmore_\cT(f) \, \varphi_i$.}
	\label{eq:discretesaddlepointsystem}
\end{align}
Differentiation of this equation along the downward gradient trajectory leads to
\begin{align}
	\begin{pmatrix}
		\mathbf{J}(f) & \mathbf{A}(f)\transp\\
		\mathbf{A}(f) & \mathbf{0}
	\end{pmatrix}
	\begin{pmatrix}
		\dot{ \mathbf{u}}\\
		\dot{ \mathbf{\lambda}}
	\end{pmatrix}	
	=
	\begin{pmatrix}
		D\mathbf{b}(f) \, u\\
		\mathbf{0}
	\end{pmatrix}
	-
	\begin{pmatrix}
		D\mathbf{J}(f)\,u & D\mathbf{A}(f)\transp\,u\\
		D\mathbf{A}(f)\,u & \mathbf{0}
	\end{pmatrix}
	\begin{pmatrix}
		\mathbf{u}\\
		\mathbf{\lambda}
	\end{pmatrix}.
	\label{eq:discretesaddlepointsystem2}
\end{align}
We use this to compute a circular search path $\varTheta_f(t)$ (with speed depending affinely on step size) that fits the downward gradient trajectory up to second order.
This has two advantages: First, the increase in constraint violation is at most of third order: $\varPhi_\cT(\varTheta_f(t)) = \varPhi_\cT(f) + \LandO(\norm{t \, u}^3)$. Second, the step size $\tau_0>0$ with $\varTheta_f'(\tau_0) = 0$ is often a very good approximation of the Curry step size and provides an excellent initial guess for a backtracking line search. 
When a suitable step size $\tau$ is found, the constraint violation $\varPhi_\cT(\varTheta_f(\tau))$ can be reduced by iteratively applying
\begin{align*}
	\mathbf{f}_{k+1} = \mathbf{f}_{k} - \mathbf{A}(f)^\dagger \, \varPhi_\cT(f_k),
	\qquad
	\text{with $f_0 = \varTheta_f(\tau_0)$}
\end{align*}
until the size of $\varPhi_\cT(f_k)$ is reduced below a certain tolerance.
By \autoref{rem:saddlepointmatrixisuseful}, we may compute $\mathbf{v} \ceq \mathbf{A}(f)^\dagger \, w$ by solving the linear system
\begin{align}
	\begin{pmatrix}
		\mathbf{J}(f) & \mathbf{A}(f)\transp\\
		\mathbf{A}(f) & \mathbf{0}
	\end{pmatrix}
	\begin{pmatrix}
		\mathbf{v}\\
		\mathbf{\mu}
	\end{pmatrix}	
	=
	\begin{pmatrix}
		\mathbf{0}\\
		\mathbf{w}
	\end{pmatrix}.
	\label{eq:discretesaddlepointsystem3}
\end{align}
Usually, one has  $\tau = \tau_0$ and a single iteration suffices to keep the constraint violation at bay. Hence, the generic gradient flow iterate $f_{\on{new}}$ following $f$ has the coordinates
\begin{align*}
	\mathbf{f}_{\on{new}}
	= \varTheta_f(\tau_0) - \mathbf{A}(f)^\dagger \, \varPhi_\cT(\varTheta_f(\tau_0)).
\end{align*}
In principle, there are many ways to solve sparse saddle point systems such as \eqref{eq:discretesaddlepointsystem}--\eqref{eq:discretesaddlepointsystem3}
(see~\cite{MR2168342} for an overview).
Iterative linear solvers are not able to play out their strengths in our application: They depend heavily on good preconditioners (whose construction may be very problem depending) and good initial guesses (which our gradient method does not provide due to the closeness to the Curry step size). In general, they have been experienced to be not on par with sparse direct solvers for 2D-meshes, especially for bi-Laplacian operators (see \cite{Botsch2005}).
Thus, we decided to use a direct linear 	solver, the sparse $LDL\transp$-factorization provided by PARDISO from the Intel Math Kernel Library 2017.
As the sparsity pattern of the saddle point matrix does not change in the course of computation, its symbolic factorization has to be computed only once in the beginning.
The numerical factorization has to be recomputed in every iteration, but it is used at least three times for the systems \eqref{eq:discretesaddlepointsystem}--\eqref{eq:discretesaddlepointsystem3}.

\begin{table}
\capstart
\begin{center}
	\begin{tabular}{|r|r|r|r|r|}\hline
		No. of faces &\multicolumn{2}{c|}{Initialization} &\multicolumn{2}{c|}{Iteration} \\\cline{2-5}
&time (s) & speed (faces/s) & time (s) & speed (faces/s) \\\hline		\num{22640} & \num{    0.62} & \num{ 36633.3} & \num{    0.44} & \num{ 51762.2} \\\hline
		\num{90560} & \num{    2.10} & \num{ 43211.3} & \num{    1.17} & \num{ 77137.2} \\\hline
		\num{362240} & \num{    8.39} & \num{ 43176.8} & \num{    4.63} & \num{ 78282.9} \\\hline
		\num{1448960} & \num{   39.73} & \num{ 36471.1} & \num{   21.68} & \num{ 66828.6} \\\hline
	\end{tabular}
	\caption{Timings for the triceratops mesh from \autoref{fig:Triceratops} in different resolutions. Initialization timings include generation of sparsity patterns and numerical factorization.
Iteration timings were averaged over the first 30 iterations and consist of computation times for projected gradient, second order fit to flow lines, backtracking line search, and iterative projection onto constraint manifold.}
\label{tab:TriceratopsTimings}
\end{center}	
\end{table}

Derivatives of local vectors and matrices were symbolically computed with Mathematica at compile time and automatically turned into parallelized, runtime efficient binaries with the help of the built-in \texttt{Compile} command.
More hand-tuning was invested in the code for the assembly of first total derivatives and second directional derivatives to avoid costly assembly of higher-degree tensors such as $D\mathbf{L}$, $DD\mathbf{\Lambda}_{\on{lump}}$, and the third derivative of the area functional. Fortunately, the required quantities can be expressed by contractions of vectors with 3-tensors that are assembled from per-triangle tensors.
Thus, these can be implemented by distributing the vectors to the triangles, applying local tensor-vector operations, and assembling the resulting matrix afterwards.

In order to give the reader an impression of the performance of the projected $H^2$-gradient descent, we collected some timings for the example depicted in 	\autoref{tab:TriceratopsTimings}. The tests were run on an Intel Core i7-4980HQ with 16\,GB~RAM under macOS 10.12.3 and Mathematica 11.0.1.

\autoref{fig:HandleBody2} and \autoref{fig:Plots} show that the step sizes and tractories along the discrete $H^2$-gradient descent are rather robust under change of mesh resolution, reflecting the fact that the presented descending algorithm is also well-defined in the infinite-dimensional setting.

\clearpage

\section{Conclusion}

In order to keep the presentation as clean as possible, we refrained so far from outlining further generalizations. Still, we deem it worthwhile to draw the reader's attention to the following three observations:

First, the trajectories of the $H^2$-gradient flow starting at $f$ remain in $W^{k+2,p}$ for all finite times where the flow exists provided that $f$ is an immersion of class $W^{k+2,p}$ with $k \geq 0$ and that the boundary $\partial \varSigma$ is of class $C^{k+1,1}$. A similar statement holds true for the iterates of $H^2$-gradient descent.

Second, \autoref{theo:elasticafunctionalisRiesz} applies only to those functionals that depend at most quadratically on the second fundamental form $\II(f)$ and sufficiently smoothly on $f$ and $\dd f$. Even more general functionals and their $H^2$-gradient flows can be considered on $\cC = \Imm^{3,p}(\varSigma;\R^m)$ with the spaces
\begin{align*}
	\RX[\cC][f] = \soboo{3}{p}[\varSigma][\AmbSpace],
	\;
	\RH[\cC][f] = \soboo{2}{2}[\varSigma][\AmbSpace],	
	\;
	\RY[\cC][f] = \soboo{1}{q}[\varSigma][\AmbSpace],
\end{align*}
and the Riesz isomorphisms
\begin{align*}
	\ninnerprod{\RI[\cC]\at_f \,v_1,v_2} = \int_\varSigma \ninnerprod{\Delta_f \,v_1,\Delta_f \,v_2} \, \vol_f
	\qand
	\ninnerprod{\RJ[\cC]\at_f \,u,w} = - \int_\varSigma \ninnerprod{\dd \, \Delta_f \,u,\dd w}_f \, \vol_f.
\end{align*}	

Thirdly, there is an analogue of \autoref{prop:W2pisparaRiemannian}  for the space
\begin{align*}
		\ConfSpace \ceq \set{f \in \Imm^{2,p}(\varSigma;M) | f|_{\partial \varSigma} = \gamma},
\end{align*}		
of immersions into any other Riemannian manifold $(M,g_0)$ without boundary: 
The essential step is in using Sobolev spaces of the form  $\soboo{2}{r}[\varSigma][f^\pull TM]$ for $\RX[\cC][f]$, $\RH[\cC][f]$, and $\RY[\cC][f]$. Here, $f^\pull TM$ is the \emph{pullback vector bundle} along $f$ (see \cite{MR1335233} for the definition of the pullback of a vector bundle). This is a vector bundle of class $\sobo{2}{p} \subset C^1$ over $\varSigma$ and $\sobo{s}{r}[\varSigma][f^\pull TM]$ denotes the Banach space of \emph{sections} of regularity $\sobo{s}{r}$ in it. 
Moreover, the Riesz isomorphisms have to be defined with respect to the covariant Laplacian $\Delta_{g,g_0} \, u = \tr_g (\nabla^{g,g_0} \nabla^{g,g_0} u)$ with $g = f^\pull g_0$.

\bigskip

Finally, we point out that we have primarily numerical applications in mind so that
the spaces chosen here are \emph{not} well-suited for proving existence of minimizers via the direct method of calculus of variations; they were just not constructed for this task.
For a treatment on the direct method for the Willmore energy and on the issues associated with it, we refer to \cite{zbMATH00831712}.
In a nutshell, the Willmore energy of an immersion $f$ provides only few control on the induced Riemannian metric $g_f$ in the sense that even for a minimizing sequence $(f_n)_{n \in \N}$, uniform bounds
\begin{align*}
	\lambda \, \ReferenceMetric \leq g_{f_n} \leq \varLambda \, \ReferenceMetric \quad \text{for all $n \in N$,}
\end{align*}
with respect to a (smooth) reference metric $\ReferenceMetric$ \emph{cannot} be guaranteed.
For the surfaces $M_n \ceq f_n(\varSigma)$, this means that they might degenerate: Even if a limit surface  $M = \lim_{n\to \infty} M_n$ exists (e.g., in Hausdorff distance), it might be of different topological type.
Curiously, we never experienced such a behavior for minimizing sequences generated by our discretized $H^2$-gradient descent.

\clearpage

\appendix

\section{Elliptic Regularity}

We follow closely the exposition on $L^p$-regularity for the Dirichlet problem given in of Sections 9.5 and 9.6 in \cite{MR1814364}, generalizing the results to compact manifolds with Riemannian metrics of class $W^{1,p}$.

\begin{theorem}[Elliptic Regularity]\label{theo:Lrellipticregularity}
Let $m,n \in \N$, $k \in \N \cup\{0\}$, and $r_0$, $r$, $p \in \intervaloo{1,\infty}$ with $p \geq n$ and $r_0 \leq r \leq p$.
Let $\varSigma$ be an $n$-dimensional, compact manifold with boundary of class $C^{k+1,1}$ and
fix a smooth Riemannian metric $G$ as reference and for computing Lebesgue and Sobolev norms.
For given $0< \lambda \leq \varLambda < \infty$ and $R>0$, there is a constant $C_{k,r} = C_{k,r}(\lambda,\varLambda,R,p,G) \geq 0$ such that for each Riemannian metric $g$ of class $W^{k+1,p}$ with $\varLambda^{-1} \,G \leq g \leq \lambda^{-1} \,G$ and $\nnorm{g}_{W^{k+1,p}} \leq R$, the following statement holds true:

For each $u \in \soboo{2}{r}[\varSigma][\AmbSpace]$ 
with $\Delta_g u \in W^{k,r}(\varSigma;\AmbSpace)$ one has
\begin{align*}
	u \in \soboo{k+2}{r}[\varSigma][\AmbSpace] \qand
	\nnorm{u}_{W^{k+2,r}} \leq C_{k,r} \; (\nnorm{u}_{L^r} + \nnorm{\Delta_g u}_{W^{k,r}}).
\end{align*}
\end{theorem}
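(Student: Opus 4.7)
The plan is to adapt the classical $L^r$-regularity theory for elliptic operators (essentially Sections~9.5--9.6 of \cite{MR1814364}, extended to allow VMO leading coefficients) to the manifold setting and to metrics of low Sobolev regularity. First I would localize: cover $\varSigma$ by finitely many interior charts and boundary charts (the latter straightened so that $\partial\varSigma$ becomes $\{x_n=0\}$ locally, using the $C^{k+1,1}$-regularity of the boundary), and choose a subordinate $C^{k+1,1}$ partition of unity $\{\chi_j\}$. The global estimate on $u$ then follows by assembling local estimates on $\chi_j u$, each of which satisfies an inhomogeneous Dirichlet problem in $\R^n$ or in the half-space.

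In each chart, the Laplace--Beltrami operator reads
\[
\Delta_g u = g^{ij}\,\pd_i\pd_j u + b^k\,\pd_k u,
\qquad
b^k = (\det g)^{-1/2}\,\pd_j\bigparen{(\det g)^{1/2}\,g^{jk}},
\]
with $g^{ij} \in W^{k+1,p}$ and $b^k \in W^{k,p}$. The two-sided bound $\varLambda^{-1}G \leq g \leq \lambda^{-1}G$ furnishes uniform ellipticity, and $\nnorm{g}_{W^{k+1,p}} \leq R$ controls the coefficients. The hypothesis $p\geq n$ ensures the Morrey-type embedding $W^{1,p} \hookrightarrow \mathrm{VMO}$, so the leading coefficients lie in VMO with modulus depending only on $R$ and $G$. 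For the base case $k=0$, I would apply the VMO-$L^r$ theory (Chiarenza--Frasca--Longo, i.e.\ the natural extension of the Gilbarg--Trudinger approach) to obtain
\[
\nnorm{\chi_j u}_{W^{2,r}} \leq C \bigparen{\nnorm{\chi_j u}_{L^r} + \nnorm{\Delta_g(\chi_j u)}_{L^r}}.
\]
Expanding $\Delta_g(\chi_j u) = \chi_j\,\Delta_g u + [\Delta_g,\chi_j]u$, the commutator is of first order in $u$ with coefficients involving $\dd\chi_j$, $\dd^2\chi_j$, and $g$; it is therefore controlled by $\nnorm{u}_{W^{1,r}}$ and absorbed via interpolation at the cost of a multiple of $\nnorm{u}_{L^r}$. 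Summing over $j$ delivers the $k=0$ estimate, and boundary charts are handled identically after flattening, with the Dirichlet trace condition preserved.

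For the inductive step $k \mapsto k+1$, assume $\Delta_g u \in W^{k+1,r}$ and, by induction, $u \in W^{k+2,r}$. In an interior chart, tangentially differentiating the equation yields
\[
\Delta_g(\pd_\alpha u) = \pd_\alpha \Delta_g u - (\pd_\alpha g^{ij})\,\pd_i\pd_j u - (\pd_\alpha b^k)\,\pd_k u.
\]
By the Sobolev multiplication rule (again exploiting $p\geq n$), the right-hand side lies in $W^{k,r}$, so the inductive estimate applied to $\pd_\alpha u$ gives tangential $W^{k+3,r}$-regularity. Normal regularity at a boundary chart is then recovered algebraically from $g^{nn}\,\pd_n^2 u = \Delta_g u - (\text{terms with at most one normal derivative})$, using that $g^{nn}$ is bounded below by uniform ellipticity. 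Iterating ``tangential-then-normal'' and reassembling through the partition of unity completes the induction.

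The main obstacle is the uniformity of $C_{k,r}$ over the set $\{g : \varLambda^{-1}G \leq g \leq \lambda^{-1}G,\ \nnorm{g}_{W^{k+1,p}} \leq R\}$. This rests on two points: (i) the VMO-modulus of the localized $g^{ij}$ is controlled uniformly once $\nnorm{g}_{W^{1,p}} \leq R$ with $p \geq n$, since $W^{1,p}$-bounded families lie in a precompact subset of VMO; and (ii) the Sobolev multiplication and interpolation constants appearing in the commutator and inductive estimates depend only on $R,\lambda,\varLambda,p,G$. The borderline case $p=n$ is the most delicate, and is precisely why the VMO theory (rather than the more familiar $C^{0,\alpha}$/Schauder theory) must be invoked at the base level.
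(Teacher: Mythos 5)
Your overall architecture---localize in charts, apply the flat $L^r$ theory to the principal part $a^{ij}\pd_i\pd_j$, control the remainder, and induct on $k$---matches the paper's, and your explicit induction for $k>0$ and your attention to uniformity of the constants are both welcome (the paper dismisses $k>0$ as ``standard''). The genuine gap is your treatment of the first-order term. In a chart one has $\Delta_g u = a^{ij}\pd_i\pd_j u + \beta^j\pd_j u$ with $\beta^j$ of class $W^{k,p}$ only, so for $k=0$ the drift coefficient is merely $L^p$, not bounded. You claim the lower-order contributions are ``controlled by $\nnorm{u}_{W^{1,r}}$ and absorbed via interpolation,'' but H\"older gives $\nnorm{\beta^j\pd_j u}_{L^s}\leq \nnorm{\beta}_{L^p}\nnorm{\dd u}_{L^{r}}$ only for $\tfrac{1}{s}=\tfrac{1}{p}+\tfrac{1}{r}>\tfrac{1}{r}$, i.e.\ the product is \emph{not} in $L^r$ at that level of regularity. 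To place it in $L^r$ you must invoke the Sobolev gain $\dd u\in L^{r^*}$ coming from $u\in W^{2,r}$, so the term is really controlled by $\nnorm{u}_{W^{2,r}}$; absorbing it into the left-hand side then needs an $\varepsilon$-Ehrling inequality for $W^{2,r}\hookrightarrow W^{1,r^*}$, which is the \emph{critical}, non-compact embedding, so no such inequality exists, and $\nnorm{\beta}_{L^p}\leq C(R)$ is not small. For $p>n$ strictly this can be repaired by choosing the H\"older exponent subcritically ($\tfrac1r=\tfrac1p+\tfrac1t$ with $t<r^*$, where the embedding is compact), but as written the step does not close, and VMO regularity of the \emph{leading} coefficients does nothing to rescue an $L^n$ drift at the borderline $p=n$ you single out.

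The paper avoids absorption altogether by a bootstrap in the integrability exponent: it moves $\beta^j\pd_j u$ to the right-hand side, observes via Sobolev and H\"older that $u\in W^{2,\varrho}$ forces $\beta^j\pd_j u\in L^{p_2(\varrho)}$ with $p_2(\varrho)>\varrho$ by a uniform increment, applies Lemma 9.16 of Gilbarg--Trudinger to the pure second-order operator $L=a^{ij}\pd_i\pd_j$ with continuous coefficients, and iterates $\varrho\mapsto\min(r,p_2(\varrho))$ finitely many times up to $r$. This also yields the stronger statement actually used later in the appendix---that $u$ need only lie in $W^{2,r_0}$ a priori for some $r_0\leq r$---which your one-shot estimate at level $r$ would not deliver. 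You should either adopt this bootstrap or supply the correct subcritical H\"older/Ehrling absorption for the drift term. (In fairness, both the paper's appeal to $a^{ij}\in C^0(\overline{\varOmega})$ and its increment $p_2(\varrho)>\varrho$ genuinely require $p>n$, so the case $p=n$ admitted by the theorem's hypotheses is problematic on both routes; your VMO substitute for the leading part is the more robust choice there, but it does not settle the drift.)
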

\begin{proof}
We treat only the case $k=0$ as the situation for $k>0$ follows from this case via standard arguments.
To this end, let $u \in \soboo{2}{r_0}[\varSigma][\AmbSpace]$ with $f \ceq \Delta_g u \in L^r(\varSigma)$.
Since $\varSigma$ is compact, there are finitely many charts $x_\alpha \colon U_\alpha \to  \R^{n-1}\times \intervalco{0,\infty}$ and open sets $\varOmega_\alpha' \Subset  \varOmega_\alpha \Subset U_\alpha$ such that the sets $\varOmega_\alpha' $ cover $\varSigma$.
Thus, it suffices to derive the estimates
\begin{align*}
	\nnorm{u}_{W^{2,r}(\varOmega_\alpha')} \leq C\, (\nnorm{u}_{L^r(\varOmega_\alpha)} + \nnorm{f}_{L^r(\varOmega_\alpha)})
\end{align*}
for these $\varOmega_\alpha'$. Here and for the rest of the proof $\nnorm{\cdot}_{W^{s,r}(\varOmega_\alpha')}$ and $\nnorm{\cdot}_{W^{s,r}(\varOmega_\alpha)}$ denote the corresponding norms with respect to the \emph{Euclidean} metric on $\varOmega_\alpha'$ and $\varOmega_\alpha$, respectively.

To clean up notation, we 
fix one such chart $x \colon U \to \R^{n-1}\times \intervalco{0,\infty}$, together with $\varOmega' \Subset \varOmega \Subset U$.
Within this chart, the Laplace-Beltrami operator in divergence form  reads in Einstein notation as follows:
\begin{align*}
	\Delta_g u =
	\nabs{\det(\textbf{g})}^{-\frac{1}{2}}  \pd_i \, \Big( \nabs{\det(\textbf{g})}^{\frac{1}{2}} \, \textbf{g}^{ij} \pd_j u \Big),
\end{align*}
where $\textbf{g}^{ij}$ denote the entries of the \emph{inverse} of the Gram matrix $\textbf{g}_{ij} \ceq g( \tfrac{\pd}{\pd x_i},\tfrac{\pd}{\pd x_j})$. 
By the Leibniz rule, the local nondivergence form of our equation $\Delta_g u  = f$
on $\varOmega$ reads as
\begin{align*}
	\Delta_g u = a^{ij}\, \pd_i \, \pd_j u  + \beta^j\, \pd_j u = f
\end{align*}
with $a^{ij} \ceq \textbf{g}^{ij}|_{\varOmega} \in C^0(\overline{\varOmega})$, $\beta^j \ceq \nabs{\det(\textbf{g})}^{-\frac{1}{2}}  \pd_i \, \big( \nabs{\det(\textbf{g})}^{\frac{1}{2}} \, \textbf{g}^{ij}\big)|_{\varOmega} \in L^p(\varOmega;\R)$.
We define the operator $L u = a^{ij}\, \pd_i \, \pd_j u$ and observe that $u$ is a strong solution of
\begin{align}
	Lu = f- \beta^j\, \pd_j u
	\quad \text{on $\varOmega$}.
	\label{eq:Luequals}
\end{align}
We use this equation together with Lemma 9.16 from \cite{MR1814364} for an elliptic bootstrapping argument. 
For each $\xi \in (\R^n)'$, we have the inequalities	
\begin{align*}
	c(\varOmega) \, \lambda \, \delta^{ij} \xi_i \xi_j 
	\leq \lambda \, \textbf{G}^{ij} \xi_i \xi_j 
	\leq a^{ij} \xi_i \xi_j 	
	\leq \varLambda \, \textbf{G}^{ij} \xi_i \xi_j 		
	\leq C(\varOmega)\, \varLambda \, \delta^{ij} \xi_i \xi_j,
\end{align*}
where $\textbf{G}$ is the gram matrix of $G$ with respect to the chosen local coordinates.
Hence we see that $a^{ij}$ is elliptic in the sense of Section 9.5 in \cite{MR1814364}.

\emph{Bootstrapping step:} 
Suppose that $u$ is an element of $\soboo{2}{2}[\varrho][\AmbSpace]$ and satisfies
\begin{align*}
	\nnorm{u}_{W^{2,\varrho}} \leq C_{0,\varrho} \, (\nnorm{u}_{L^\varrho} + \nnorm{f}_{L^\varrho})
	\quad
	\text{with some $\varrho \in \nintervaloc{1,p}$.}
\end{align*}
Define
\begin{align*}
	p_1(\varrho) \ceq \begin{cases}
		\frac{n\,\varrho}{n-\varrho}, &1-\frac{n}{\varrho} < 0, \\
		\frac{p (n + p)}{p-n}, &1-\frac{n}{\varrho} = 0, \\
		\infty, &1-\frac{n}{\varrho} > 0.
	\end{cases}
	\qand
	p_2(\varrho) \ceq \begin{cases}
		p, &1- \frac{n}{\varrho} >0, \\
		\frac{n+p}{2}, &1- \frac{n}{\varrho} = 0, \\
\frac{n p \varrho}{n p + n\varrho-p \varrho }, &1- \frac{n}{\varrho} <0.
	\end{cases}
\end{align*}
and observe that $p_1(\varrho)> \varrho$ and $\frac{1}{p} + \frac{1}{p_1(\varrho)} = \frac{1}{p_2(\varrho)}$ hold in any case.
The Sobolev inequality yields
\begin{align*}
	\nnorm{\dd u}_{L^{p_1(\varrho)}} 
	\leq C\, \nnorm{u}_{\sobo{2}{\varrho}} 
	\leq C \, (\nnorm{u}_{L^\varrho} + \nnorm{f}_{L^\varrho}).
\end{align*}
The H\"older inequality implies
\begin{align*}
	\nnorm{\beta^j \pd_j u}_{L^{p_2(\varrho)}(\varOmega)} 
	&\leq C\, \nnorm{\beta}_{L^p(\varOmega)} \, \nnorm{\dd u}_{L^{p_1(\varrho)}(\varOmega)} 
	\leq C \, \nnorm{\beta}_{L^p(\varOmega)} \, (\nnorm{u}_{L^\varrho} + \nnorm{f}_{L^\varrho}).
\end{align*}
This shows that (the restriction to $\varOmega$ of) the right hand side of \eqref{eq:Luequals} lies in $L^{p_3(\varrho)}(\varOmega)$ with $p_3(\varrho) \ceq \min(r, p_2(\varrho))$. 
Now Lemma 9.16 from \cite{MR1814364} provides us with the information $u|_{\varOmega'} \in \sobo{2}{p_3(\varrho)}[\varOmega'][\AmbSpace]$, 
$u|_{\varOmega' \cap \partial \varSigma} = 0$ as trace of a $W^{1,p_3(\varrho)}$-function, and
\begin{align*}
	\nnorm{u}_{\sobo{2}{p_3(\varrho)}[\varOmega']} 
	\leq 
	C_{0,p_3(\varrho)}(\varOmega,\varOmega') \, (\nnorm{u}_{L^{p_3(\varrho)}(\varOmega)} + \nnorm{f}_{L^{p_3(\varrho)}(\varOmega)}).
\end{align*}
Collecting this information from the finitely many chosen charts leads to
$u\in \soboo{2}{p_3(\varrho)}[\varSigma][\AmbSpace]$ and the global regularity estimate
\begin{align*}
	\nnorm{u}_{\sobo{2}{p_3(\varrho)}} 
	\leq 
	C_{0,p_3(\varrho)} \; (\nnorm{u}_{L^{p_3(\varrho)}} + \nnorm{f}_{L^{p_3(\varrho)}}).
\end{align*}

\emph{Bootstrapping conclusion:}
One has for all $\varrho \geq n$ that $p_2(\varrho)>n$ so that $p_3(\varrho) = \min(r,p) = r$ 
holds.
For $\varrho \in \intervaloo{1,n}$, the function $\varphi(\varrho) = \frac{n p \varrho}{n p + n\varrho-p \varrho }-\varrho$ is strictly monotonically increasing and its infimum is given by
$\varphi(1)=\frac{p-n}{n + p (n-1) p}>0$. Thus, we have $p_3(\varrho) = \min(r,\varphi(\varrho) + \varrho) \geq \min(r,\frac{p-n}{n + p (n-1) p}+\varrho)$ and we arrive at $p_3 \circ \dotsm \circ p_3(\varrho) \geq \min(r,n)$ after at most finitely many bootstrapping steps.
After a further bootstrapping step, we finally arrive at
\begin{align*}
	u \in \soboo{2}{r}[\varSigma][\AmbSpace]
	\qand
	\nnorm{u}_{\sobo{2}{r}} \leq C_{0,r} \, (\nnorm{u}_{L^{r}} + \nnorm{f}_{L^{r}}).
\end{align*}
\end{proof}

\begin{definition}\label{dfn:transversalspaces}
Let $k \in \N\cup\{0\}$ and $r \in \intervaloo{1,\infty}$. Let $\varSigma$ be a compact manifold with boundary of class $C^{k+1,1}$ and let $g$ be a Riemannian metric of class $W^{k+1,p}$. If $\varSigma$ is connected, we define the spaces
\begin{align*}
	W^{k+2,r}_{\averagedom,g}(\varSigma;\AmbSpace)
	&\ceq \begin{cases}
		\soboo{k+2}{r}[\varSigma][\AmbSpace]
	 	& \partial\varSigma \neq \emptyset,\\
	 	\Set{u \in W^{k+2,r}(\varSigma;\AmbSpace) | \int_\varSigma u \, \vol_g =0},
	 	& \partial\varSigma = \emptyset,
	 \end{cases}
	 \qand
	 \\
	W^{k,r}_{\averageima,g}(\varSigma;\AmbSpace)
	&\ceq \begin{cases}
		W^{k,r}(\varSigma;\AmbSpace),
	 	& \partial\varSigma \neq \emptyset,\\
	 	\Set{u \in W^{k,r}(\varSigma;\AmbSpace) | \int_\varSigma u \, \vol_g =0},
	 	& \partial\varSigma = \emptyset.
	 \end{cases}	 
\end{align*}
If $\varSigma$ is not connected, we define
\begin{align*}
	W^{k+2,r}_{\averagedom,g}(\varSigma;\AmbSpace)
	\ceq \bigoplus_{\varOmega \in \ConnComp(\varSigma)} W^{k+2,r}_{\averagedom,g}(\varOmega;\AmbSpace)
	\qand
	W^{k,r}_{\averageima,g}(\varSigma;\AmbSpace)
	\ceq \bigoplus_{\varOmega \in \ConnComp(\varSigma)} W^{k,r}_{\averageima,g}(\varOmega;\AmbSpace),
\end{align*}
where the direct sums run over the finitely many connected components of $\varSigma$.	
\end{definition}

\begin{lemma}[Closed Range]\label{lem:closedrange}
Let  $\varSigma$ be an $n$-dimensional, compact manifold with boundary of class $C^{k+1,1}$, let $g$ be a Riemannian metric of class $W^{k+1,p}$ with $k \in \N\cup \{0\}$ and $p>n$, and let $r \in \nintervaloc{1,p}$.
For each smooth Riemannian metric $G$ on $\varSigma$ there is a constant $C_{k,r}(G) \geq 0$ such that
\begin{align}
		\nnorm{u}_{W^{k+2,r}} \leq C_{k,r}(G) \, \nnorm{ \Delta_g u}_{W^{k,r}}
		\quad
		\text{holds for all $u \in W^{k+2,r}_{\averagedom,g}(\varSigma;\AmbSpace)$.}
		\label{eq:closednessinequality}
\end{align}
\end{lemma}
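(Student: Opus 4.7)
The plan is to combine \autoref{theo:Lrellipticregularity} with a compactness--contradiction argument in order to absorb the $\nnorm{u}_{L^r}$ term in the a priori estimate. Supposing the inequality fails, I extract a normalized sequence $u_n \in W^{k+2,r}_{\averagedom,g}(\varSigma;\AmbSpace)$ with $\nnorm{u_n}_{W^{k+2,r}} = 1$ and $\nnorm{\Delta_g u_n}_{W^{k,r}} \to 0$. The elliptic regularity estimate applied to each $u_n$ forces $\nnorm{u_n}_{L^r}$ to stay bounded away from zero. Applying the same estimate to the differences $u_n - u_m$ together with Rellich--Kondrachov compactness of $W^{k+2,r} \hookrightarrow L^r$ upgrades $L^r$-convergence of a subsequence to strong $W^{k+2,r}$-convergence to a limit $u$ with $\nnorm{u}_{W^{k+2,r}} = 1$ and $\Delta_g u = 0$. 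The linear conditions defining $W^{k+2,r}_{\averagedom,g}$ (Dirichlet trace on boundary components, vanishing mean on closed components) are closed and pass to the limit, so $u \in W^{k+2,r}_{\averagedom,g}$.

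The remaining task is to show $u = 0$, which yields the contradiction. To justify integration by parts I first bootstrap: a further application of \autoref{theo:Lrellipticregularity} with the target exponent raised to $p$ is permissible since $\Delta_g u = 0$ lies trivially in $W^{k,p}$ and the hypothesis $r \leq p$ makes $r_0 = r$ admissible. This gives $u \in W^{k+2,p}$, and because $p > n$ the Morrey embedding places $u$ in $C^1(\varSigma;\AmbSpace)$, which is more than enough regularity to integrate by parts coordinatewise on each connected component $\varOmega \subset \varSigma$:
\begin{align*}
0 = \int_\varOmega \ninnerprod{\Delta_g u^a, u^a} \vol_g = -\int_\varOmega \nabs{\nabla u^a}_g^2 \, \vol_g + \int_{\partial\varOmega} u^a \, \ninnerprod{\nabla u^a, \nu}_g \, ds_g.
\end{align*}
The boundary term vanishes under the Dirichlet condition on components with nontrivial boundary and is absent on closed components. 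Either way $\nabla u^a \equiv 0$ on $\varOmega$, so $u^a$ is constant there, and the Dirichlet condition (resp.\ zero-mean condition) then pins that constant to zero.

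I expect the main obstacle to be the two steps in which the subspace $W^{k+2,r}_{\averagedom,g}$ must be respected: first, verifying that the Cauchy-in-$W^{k+2,r}$ step genuinely produces a limit in this subspace, and second, exploiting its defining condition at the end to force the locally constant limit to vanish. The hypothesis $p > n$ is used twice and is indispensable: once inside \autoref{theo:Lrellipticregularity} itself, and once in the Morrey embedding $W^{2,p} \hookrightarrow C^1$ needed to legitimize integration by parts against a merely $W^{1,p}$ metric $g$.
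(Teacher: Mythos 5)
Your argument is correct and follows essentially the same route as the paper: a contradiction--compactness argument that combines the a priori estimate of \autoref{theo:Lrellipptic regularity} (i.e.\ \autoref{theo:Lrellipticregularity}) with Rellich compactness, and then kills the harmonic limit by integration by parts together with the Dirichlet/zero-mean side condition. The only (harmless) deviations are cosmetic: you normalize in $W^{k+2,r}$ and upgrade to \emph{strong} $W^{k+2,r}$-convergence via the Cauchy trick, where the paper normalizes in $L^r$ and settles for weak convergence, and you bootstrap the limit to $W^{k+2,p}\hookrightarrow C^1$ before integrating by parts, where the paper only goes to $W^{2,2}$ and uses the weak form of Green's identity.
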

\begin{proof}[By contradiction.] 
\emph{Assume} that \eqref{eq:closednessinequality} is false. Then there is a sequence $(u_\alpha)_{\alpha\in\N}$ in $W^{k+2,r}_{\averagedom,g}(\varSigma;\AmbSpace)$ with $\nnorm{\Delta_g u_\alpha}_{L^{r}} \to 0$ while $\nnorm{u_\alpha}_{W^{k+2,r}} \not \to 0$ as $\alpha \to \infty$. 
By normalization (and by choosing a subsequence if necessary), we may assume that $\nnorm{u_\alpha}_{L^{r}} =1$.
\autoref{theo:Lrellipticregularity} implies that $(u_\alpha)_{\alpha\in\N}$ is bounded in $W^{k+2,r}(\varSigma;\AmbSpace)$.
By Rellich compactness and the Banach-Alaoglu theorem, there is $u \in W^{k+2,r}_{\averagedom,g}(\varSigma;\AmbSpace)$ such that
$u_\alpha \to u$ in $\soboo{1}{r}[\varSigma][\AmbSpace]$ and $u_\alpha \rightharpoonup u$ in $W^{k
+2,r}(\varSigma;\AmbSpace)$ with $\nnorm{u}_{L^{r}} =1$ and $\Delta_g u =0$. 
By elliptic regularity (see \autoref{theo:Lrellipticregularity}), we have at least $u \in
\soboo{2}{2}[\varSigma][\AmbSpace]$ so that we obtain
\begin{align}
	\int_\varSigma \nabs{\dd u}_g^2 \, \vol_g 
	= 
	\int_\varSigma \ninnerprod{-\Delta_g u, u}_g \, \vol_g =0.
	\label{eq:selftest}
\end{align}
This shows $\dd u = 0$ and together with $u \in \soboo{1}{2}[\varSigma][\AmbSpace]$ we obtain $u=0$ which is a \emph{contradiction} to $\nnorm{u}_{L^{r}} =1$.
\end{proof}

\begin{lemma}\label{lem:Laplaceinvertible}
Under the conditions of \autoref{lem:closedrange}, the operator
\begin{align}
	-\Delta_g \colon W^{k+2,r}_{\averagedom,g}(\varSigma;\AmbSpace) 
		\to 
	W^{k,r}_{\averageima,g}(\varSigma;\AmbSpace)
	\label{eq:Deltaginvertible}
\end{align}
is an isomorphism of Banach spaces.
\end{lemma}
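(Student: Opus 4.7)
The plan is to verify continuity, well-definedness, and bijectivity of $-\Delta_g$ in turn, concentrating the actual work on surjectivity.

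Continuity and well-definedness are routine. In any coordinate chart $-\Delta_g$ is a second-order operator with coefficients polynomial in $\mathbf{g}^{ij}$ and $\sqrt{\det\mathbf{g}}$; since $p>n$ the bounds $\lambda G\leq g\leq \varLambda G$ together with the Morrey embedding $W^{k+1,p}\hookrightarrow C^{k,\alpha}$ ensure that these coefficients themselves lie in $W^{k+1,p}$, whence standard Sobolev multiplication yields boundedness of $-\Delta_g\colon W^{k+2,r}(\varSigma;\AmbSpace)\to W^{k,r}(\varSigma;\AmbSpace)$ for every $r\in\nintervaloc{1,p}$. On each boundaryless connected component the divergence theorem forces the $g$-integral of $\Delta_g u$ to vanish, so the image actually lies in $W^{k,r}_{\averageima,g}$. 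Injectivity and closed range of $-\Delta_g$ then follow directly from the a priori estimate in Lemma \ref{lem:closedrange}.

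It remains to prove surjectivity, which I would do in three cases. For $r=2$, I would apply Lax--Milgram to the bilinear form $a(u,v)\ceq\int_\varSigma \ninnerprod{\dd u,\dd v}_g\,\vol_g$ on $W^{1,2}_{\averagedom,g}(\varSigma;\AmbSpace)$; coercivity comes from Poincar\'e's inequality in the Dirichlet case and from Poincar\'e--Wirtinger on each connected component in the boundaryless case, with the pointwise bound $\lambda G\leq g\leq \varLambda G$ transferring these inequalities from the smooth reference $G$ to $g$. Given $f\in L^2_{\averageima,g}$, Lax--Milgram yields a unique weak solution $u\in W^{1,2}_{\averagedom,g}$ of $-\Delta_g u=f$, which Theorem \ref{theo:Lrellipticregularity} upgrades to $W^{k+2,2}_{\averagedom,g}$ whenever $f\in W^{k,2}_{\averageima,g}$. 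For $r\in\nintervalcc{2,p}$, the finite measure of $\varSigma$ gives $W^{k,r}_{\averageima,g}\subset W^{k,2}_{\averageima,g}$, so the $r=2$ case produces $u\in W^{k+2,2}_{\averagedom,g}$ with $-\Delta_g u=f$, and a further application of Theorem \ref{theo:Lrellipticregularity} (feeding $u\in W^{2,2}$ together with $\Delta_g u\in W^{k,r}$) upgrades $u$ to $W^{k+2,r}_{\averagedom,g}$. For $r\in\nintervaloo{1,2}$, approximate $f\in W^{k,r}_{\averageima,g}$ by a sequence $(f_n)\subset C^\infty(\varSigma;\AmbSpace)\cap W^{k,r}_{\averageima,g}$ in $W^{k,r}$, which is possible since smooth functions are dense in $W^{k,r}$ and subtracting the $g$-average is a bounded linear projection; each $f_n\in L^2_{\averageima,g}$, so the $r=2$ case yields $u_n\in W^{k+2,2}_{\averagedom,g}\subset W^{k+2,r}_{\averagedom,g}$ with $-\Delta_g u_n=f_n$. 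Lemma \ref{lem:closedrange} then makes $(u_n)$ Cauchy in $W^{k+2,r}_{\averagedom,g}$, and its limit $u$ satisfies $-\Delta_g u=f$.

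The main obstacle is the $r=2$ case, where one must make the variational framework survive with $g$ merely of class $W^{1,p}$ and handle the two flavours of boundary condition uniformly. The equivalence of the $W^{1,2}$-norms induced by $g$ and by $G$ (via $\lambda G\leq g\leq \varLambda G$), combined with the connected-component decomposition built into Definition \ref{dfn:transversalspaces} and the Poincar\'e (resp.\ Poincar\'e--Wirtinger) inequality, provides exactly the input Lax--Milgram needs. Everything past $r=2$ is bookkeeping with Lemma \ref{lem:closedrange} and Theorem \ref{theo:Lrellipticregularity}.
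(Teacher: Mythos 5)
Your overall architecture (well-definedness via the divergence theorem, injectivity and closed range from \autoref{lem:closedrange}, then surjectivity) matches the paper, and your reductions of the cases $r\in\nintervalcc{2,p}$ and $r\in\nintervaloo{1,2}$ to the case $r=2$ are sound. But the case $r=2$ itself, on which everything rests, has a genuine gap. Lax--Milgram only produces a \emph{weak} solution $u\in W^{1,2}_{\averagedom,g}(\varSigma;\AmbSpace)$ of $a(u,v)=\int_\varSigma\ninnerprod{f,v}\,\vol_g$. You then invoke \autoref{theo:Lrellipticregularity} to upgrade it, but that theorem is an a priori estimate for functions that \emph{already} lie in $\soboo{2}{r_0}[\varSigma][\AmbSpace]$ for some $r_0>1$ and satisfy the equation strongly; its proof (nondivergence form, Lemma 9.16 of Gilbarg--Trudinger, bootstrapping the term $\beta^j\pd_j u$ with $\beta\in L^p$) never addresses the passage from a $W^{1,2}$ weak solution to a $W^{2,2}$ strong one. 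That passage is the hard point here: the leading coefficients $\mathbf{g}^{ij}$ are only H\"older continuous (not Lipschitz), so the standard difference-quotient argument fails --- $D^h\mathbf{g}^{ij}$ is bounded only in $L^p$, and paired with $\dd u\in L^2$ this does not close in $L^2$. Interior and boundary $H^2$-regularity for divergence-form operators with merely continuous (or $W^{1,p}$, $p>n$) coefficients is true but requires machinery (Meyers-type gradient estimates or VMO theory) that the paper neither develops nor cites, so the step cannot be waved through.

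The paper's proof is organized precisely to avoid this issue: it approximates $g$ by \emph{smooth} metrics $g_\alpha$ in $W^{1,p}$ and $\varphi$ by smooth $\varphi_\alpha$, solves $-\Delta_{g_\alpha}u_\alpha=\varphi_\alpha$ by classical smooth elliptic theory (where the weak-to-strong upgrade is standard), obtains uniform $W^{k+2,r}$-bounds from \autoref{theo:Lrellipticregularity} (whose constant depends on $g$ only through $\lambda$, $\varLambda$, and $\nnorm{g}_{W^{k+1,p}}$), and passes to the weak limit, identifying $-\Delta_g u=\varphi$ through Green's identity and the fundamental lemma of the calculus of variations. If you want to keep your variational skeleton, the minimal repair is to replace the direct Lax--Milgram step for the rough metric by this approximation of the metric; alternatively, you must supply an independent proof that weak $W^{1,2}$ solutions of $-\Delta_g u=f$ with $g\in W^{1,p}$, $p>n$, belong to $\soboo{2}{2}[\varSigma][\AmbSpace]$. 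Everything downstream of $r=2$ in your proposal would then go through.
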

\begin{proof}
For each connected component $\varOmega$ of $\varSigma$ with $\partial \varOmega = \emptyset$ we have with its characteristic function $\chi_\varOmega$:
\begin{align*}
	\int_\varOmega (-\Delta_g \, u) \, \vol_g
	= \int_\varSigma (-\Delta_g \, u) \, \chi_\varOmega \, \vol_g
	= \int_\varSigma \ninnerprod{\dd u , \dd \chi_\varOmega}\, \vol_g = 0.
\end{align*}
This shows that $-\Delta_g(W^{k+2,r}_{\averagedom,g}(\varSigma;\AmbSpace) ) \subset W^{k,r}_{\averageima,g}(\varSigma;\AmbSpace)$.
By \autoref{lem:closedrange}, the operator in \eqref{eq:Deltaginvertible} is injective with closed range. Thus, we merely have to show that it is surjective.
To this end, we fix a smooth Riemannian metric $G$ on $\varSigma$ and an element $\varphi \in W^{k,r}_{\averageima,g}(\varSigma;\AmbSpace)$.
Let $(g_\alpha)_{\alpha \in \N}$  be a sequence of smooth Riemannian metrics that  converge to $g$ in the $W^{1,p}$-norm. This way, there are $0<\lambda < \varLambda< \infty$ with
\begin{align*}
	\varLambda^{-1} \, G \leq g \leq \lambda^{-1} \,G
	\qand
	\varLambda^{-1} \, G \leq g_\alpha \leq \lambda^{-1} \,G
	\quad
	\text{for all $\alpha \in \N$.}
\end{align*}
Moreover, let $(\varphi_\alpha)_{\alpha \in \N}$ be a sequence of smooth functions that converges in the $W^{k,r}$-norm to $\varphi$. By substracting $(\int_\varSigma \vol_{g_\alpha})^{-1} \int_\varSigma \varphi_\alpha \, \vol_{g_\alpha}$ if necessary, we may assume that $\varphi_\alpha \in W^{k,r}_{\averageima,g_\alpha}(\varSigma;\AmbSpace)$.

Now we have $\varphi_\alpha \in W^{0,2}_{\averageima,g_\alpha}(\varSigma;\AmbSpace)$ and standard theory shows that the equation
$-\Delta_{g_\alpha} u_\alpha = \varphi_\alpha$ has a unique solution in $W^{2,2}_{\averagedom,g_\alpha}(\varSigma;\AmbSpace)$.
By \autoref{theo:Lrellipticregularity}, the sequence $(u_\alpha)_{\alpha \in \N}$ is bounded in $W^{k+2,r}(\varSigma;\AmbSpace)$. Rellich compactness and the Banach-Alaoglu theorem imply that there is a $u \in W^{k+2,r}(\varSigma;\AmbSpace)$ with $u_\alpha \to u$ in $W^{1,r}(\varSigma;\AmbSpace)$ and $u_\alpha \rightharpoonup u$ in $W^{k+2,r}(\varSigma;\AmbSpace)$.
The strong convergence in $W^{1,r}(\varSigma;\AmbSpace)$ implies that $u$ is actually an element of $W^{k+2,r}_{\averagedom,g}(\varSigma;\AmbSpace)$.
We have to show that $-\Delta_g u = \varphi$. Therefore, we consider the following consequence of Green's second identity:
\begin{align}
	\int_\varSigma \ninnerprod{\varphi_\alpha, \psi}\, \vol_{g_\alpha}
	= 
	\int_\varSigma \ninnerprod{-\Delta_{g_\alpha} u_\alpha, \psi}\, \vol_{g_\alpha}
	&=
	\int_\varSigma \ninnerprod{u_\alpha, -\Delta_{g_\alpha} \psi}\, \vol_{g_\alpha}
	\quad
	\text{for all $\psi \in C^\infty_0(\varSigma)$}.
	\label{eq:Fredholmsurjective}
\end{align}
On the one hand, $\varphi_\alpha \, \vol_{g_\alpha}$ converges to $\varphi\, \vol_g$ in the $L^r$-space of vector-valued densities, hence the left-hand side of \eqref{eq:Fredholmsurjective} converges towards $\int_\varSigma \ninnerprod{\varphi, \psi}\, \vol_{g}$.
On the one hand, $-\Delta_{g_\alpha} \, \psi\, \vol_{g_n}$ converges to $-\Delta_{g} \, \psi \, \vol_{g}$ in the $L^p$-space of vector-valued densities so that 
the right-hand side of \eqref{eq:Fredholmsurjective} has $\int_\varSigma \ninnerprod{u, -\Delta_{g} \psi}\, \vol_{g} = \int_\varSigma \ninnerprod{-\Delta_{g} u, \psi}\, \vol_{g}$ as its limit. Now the fundamental lemma of the calculus of variations leads to $-\Delta_g \, u = \varphi$, showing that $-\Delta_g$ is surjective.
\end{proof}

\begin{theorem}\label{theo:Fredholm}
Let  $\varSigma$ be an $n$-dimensional, compact manifold with boundary of class $C^{k+1,1}$, let $g$ be a Riemannian metric of class $W^{k+1,p}$ with $k \in \N\cup \{0\}$ and $p>n$.
Then for each $r \in \nintervaloc{1,p}$ the operator
\begin{align*}
	(-\Delta_g , \res) \colon W^{k+2,r}(\varSigma;\AmbSpace) 
		\to 
	W^{k,r}(\varSigma;\AmbSpace)
	\oplus
	W^{k+2-\frac{1}{r},r}(\partial \varSigma;\AmbSpace),
	\quad
	u \mapsto (-\Delta_g , u|_{\partial \varSigma})
\end{align*}
is a Fredholm operator of index $0$.
\end{theorem}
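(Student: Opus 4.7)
The plan is to identify the kernel and the image of $\cT \ceq (-\Delta_g,\res)$ explicitly by reduction to \autoref{lem:Laplaceinvertible}. Decompose $\varSigma = \varSigma_0 \sqcup \varSigma_1$, where the components of $\varSigma_1$ have non-empty boundary and those of $\varSigma_0$ are closed, and set $N_0 \ceq |\ConnComp(\varSigma_0)|$. Continuity of $\cT$ is immediate from the multiplier properties of $\sobo{k+1}{p}$ with $p>n$ and from the continuity of the boundary trace $\sobo{k+2}{r}[\varSigma][\AmbSpace] \to \sobo{k+2-1/r}{r}[\partial\varSigma][\AmbSpace]$, available because $\partial\varSigma$ is of class $C^{k+1,1}$.

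For the kernel, I would take $u \in \sobo{k+2}{r}[\varSigma][\AmbSpace]$ with $\cT u = 0$ and split it as $u = u_1 + c$, where $c$ vanishes on $\varSigma_1$ and equals the $g$-average of $u$ on each component of $\varSigma_0$. Then $u_1$ has zero trace on $\partial\varSigma$ and zero average on each closed component, so $u_1 \in W^{k+2,r}_{\averagedom,g}(\varSigma;\AmbSpace)$; since $-\Delta_g u_1 = 0$, \autoref{lem:Laplaceinvertible} forces $u_1 = 0$. Hence $u = c$ is locally constant on $\varSigma_0$ and vanishes on $\varSigma_1$, so $\ker \cT \cong \AmbSpace^{N_0}$ is finite-dimensional of dimension $m \cdot N_0$, where $m$ denotes the dimension of $\AmbSpace$.

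For the image, I claim that $\ima\cT$ coincides with the closed subspace $V$ of $\sobo{k}{r}[\varSigma][\AmbSpace] \oplus \sobo{k+2-1/r}{r}[\partial\varSigma][\AmbSpace]$ defined by the linear constraints $\int_\varOmega \varphi\,\vol_g = 0$ for every $\varOmega \in \ConnComp(\varSigma_0)$. Necessity is the divergence theorem applied to each closed component. For sufficiency, I would first pick a continuous right inverse of the boundary trace; standard theory for manifolds with $C^{k+1,1}$-boundary provides one with image in $\sobo{k+2}{r}$, and multiplication by a cutoff supported in a collar of $\partial\varSigma \subset \varSigma_1$ produces, for each $\gamma$, a lift $v \in \sobo{k+2}{r}[\varSigma][\AmbSpace]$ with $v|_{\partial\varSigma} = \gamma$ and $v \equiv 0$ on $\varSigma_0$. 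Since $\Delta_g v$ then vanishes on $\varSigma_0$, the function $\varphi_0 \ceq \varphi + \Delta_g v$ lies in $W^{k,r}_{\averageima,g}(\varSigma;\AmbSpace)$, and \autoref{lem:Laplaceinvertible} provides $u_0 \in W^{k+2,r}_{\averagedom,g}(\varSigma;\AmbSpace)$ with $-\Delta_g u_0 = \varphi_0$; the function $u \ceq u_0 + v$ then satisfies $\cT u = (\varphi,\gamma)$.

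Since $V$ is the intersection of finitely many continuous linear constraints, it is closed of codimension $m \cdot N_0 = \dim \ker \cT$; hence $\cT$ has closed range, finite-dimensional kernel, and a cokernel of the same dimension, so it is Fredholm of index $0$. The only somewhat delicate step is the construction of the boundary lift $v$ with the prescribed regularity and support, which combines the standard trace theory on $C^{k+1,1}$-boundaries with a cutoff in a collar neighborhood; this is precisely the step where the regularity hypothesis on $\partial\varSigma$ enters.
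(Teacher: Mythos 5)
Your proposal is correct and follows essentially the same route as the paper: both arguments reduce the statement to \autoref{lem:Laplaceinvertible} by lifting the boundary datum with a continuous right inverse of the trace, and both identify the kernel and cokernel with the locally constant functions on the closed components. The only difference is organizational --- you treat all components at once and describe the image explicitly as a finite-codimension subspace, whereas the paper first reduces to connected $\varSigma$ and splits into the cases $\partial\varSigma \neq \emptyset$ and $\partial\varSigma = \emptyset$.
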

\begin{proof}
We may treat the finitely many connected components of $\varSigma$ independently. Thus we may suppose without loss of generality that $\varSigma$ is connected so that we have to distinguish only two cases.
\newline
\textbf{Case I:} $\partial \varSigma \neq \emptyset$.
Actually, we show that
\begin{align*}
	(-\Delta_g , \res) \colon W^{k+2,r}(\varSigma;\AmbSpace) 
		\to 
	W^{k,r}(\varSigma;\AmbSpace)
	\oplus
	W^{k+2-\frac{1}{r},r}(\partial \varSigma;\AmbSpace)
\end{align*}
is an isomorphism. Via the existence of a continuous right inverse
\begin{align*}
	\ext \colon W^{k+2-\frac{1}{r},r}(\partial \varSigma;\AmbSpace) \to W^{k+2,r}(\varSigma;\AmbSpace)
\end{align*}
of $\res$, this is equivalent to 
\begin{align*}
	-\Delta_g \colon 
	\soboo{k+2}{r}[\varSigma][\AmbSpace]
	\to 
	W^{k,r}(\varSigma; \AmbSpace)
\end{align*}
being an isomorphism. This has already been shown in \autoref{lem:Laplaceinvertible}.
\newline
\textbf{Case II:} $\partial \varSigma = \emptyset$.
Denote by $X_0 \cong \AmbSpace$ the constant mappings from $\varSigma$ to $\AmbSpace$ and observe that the splittings
\begin{align*}
	W^{k+2,r}(\varSigma;\AmbSpace) = W^{k+2,r}_{\averagedom,g}(\varSigma;\AmbSpace) \oplus X_0
	\qand
	W^{k,r}(\varSigma;\AmbSpace) = W^{k,r}_{\averageima,g}(\varSigma;\AmbSpace) \oplus X_0	
\end{align*}
are orthogonal with respect to the $L^2$-inner product induced by $g$.
For $u \in W^{k+2,r}(\varSigma;\AmbSpace)$ with $-\Delta_g u =0$, we have by elliptic regularity (see \autoref{theo:Lrellipticregularity}) that $u \in W^{2,2}(\varSigma;\AmbSpace)$ so that the same calculation as in \eqref{eq:selftest} leads to $\dd u =0$.
Hence we have $u \in X_0$ and $\ker(-\Delta_g) = X_0 \cong \AmbSpace$.
In \autoref{lem:Laplaceinvertible}, we have shown that 
$-\Delta_g ( W^{k+2,r}_{\averagedom,g}(\varSigma;\AmbSpace)) = W^{k,r}_{\averageima,g}(\varSigma;\AmbSpace)$, hence $\coker(-\Delta_g) \cong X_0 \cong \AmbSpace$.
\end{proof}

\section{Multiplication Lemma}

\begin{lemma}\label{lem:regularityofproducts}
Let $E_1$, $E_2$, and $E_3$ be smooth vector bundles over the compact, smooth manifold~$\varSigma$, let
$\mu \colon E_1 \times_\varSigma E_2 \to E_3$ be a locally Lipschitz continuous bilinear bundle map and let $p > \dim(\varSigma)$.

Then  $\mu\circ(\sigma_1,\sigma_2) \in W^{1,\min(p,r)}(\varSigma;E_3)$ holds for all sections
$\sigma_1 \in W^{1,p}(\varSigma;E_1)$ and $\sigma_2 \in W^{1,r}(\varSigma;E_2)$.
Moreover, the induced bilinear map
\begin{align*}
	A \colon W^{1,p}(\varSigma;E_1) \times W^{1,r}(\varSigma;E_2) \to W^{1,\min(p,r)}(\varSigma;E_1)
\end{align*}
is continuous.
\end{lemma}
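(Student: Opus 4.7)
The strategy is reduction to Euclidean charts via a partition of unity, followed by a product-rule computation in Sobolev spaces with careful H\"older--Sobolev bookkeeping for each summand; the central point is to track where the hypothesis $p > n \ceq \dim(\varSigma)$ is used.

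Since $\varSigma$ is compact and the bundles are smooth, I would first choose a finite atlas on which $E_1$, $E_2$, $E_3$ trivialize, together with a subordinate partition of unity. On each chart, $\sigma_1$ and $\sigma_2$ become $W^{1,p}$- respectively $W^{1,r}$-functions on an open set $U\subset \R^n$, and $\mu$ becomes a locally Lipschitz map $M \colon U \to \Bil(\R^{d_1} \times \R^{d_2};\R^{d_3})$. Thus $M$ and its weak derivative $\dd M$ are essentially bounded on the compact support of each cutoff. Two embeddings underpin the estimates: Morrey's $W^{1,p}(\varSigma) \hookrightarrow L^\infty(\varSigma)$, applied to $\sigma_1$ because $p > n$; and the Gagliardo--Nirenberg--Sobolev embedding $W^{1,r}(\varSigma) \hookrightarrow L^s(\varSigma)$ applied to $\sigma_2$ (with $s = \infty$ if $r > n$, any $s < \infty$ if $r = n$, $s = nr/(n-r)$ if $r < n$).

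Next I would establish the product rule
\begin{align*}
	\dd\bigparen{\mu(\sigma_1,\sigma_2)} = (\dd M)(\sigma_1,\sigma_2) + M(\dd\sigma_1,\sigma_2) + M(\sigma_1,\dd\sigma_2)
\end{align*}
in the distributional sense on each chart, by approximating $\sigma_1$ and $\sigma_2$ in $W^{1,p}$ and $W^{1,r}$ by smooth sections, verifying the identity classically, and passing to the limit via dominated convergence. Each of the three summands is controlled by H\"older's inequality:
\begin{align*}
	\nnorm{(\dd M)(\sigma_1,\sigma_2)}_{L^{\min(p,r)}}
	&\leq \nnorm{\dd M}_{L^\infty}\, \nnorm{\sigma_1}_{L^\infty}\, \nnorm{\sigma_2}_{L^{\min(p,r)}},
	\\
	\nnorm{M(\dd\sigma_1,\sigma_2)}_{L^{\min(p,r)}}
	&\leq \nnorm{M}_{L^\infty}\, \nnorm{\dd\sigma_1}_{L^p}\, \nnorm{\sigma_2}_{L^{t}},
	\\
	\nnorm{M(\sigma_1,\dd\sigma_2)}_{L^{\min(p,r)}}
	&\leq \nnorm{M}_{L^\infty}\, \nnorm{\sigma_1}_{L^\infty}\, \nnorm{\dd\sigma_2}_{L^r},
\end{align*}
where $t = \infty$ when $r \geq p$ and $1/t = 1/r - 1/p$ otherwise.

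The main obstacle is the middle term in the regime $r < p$ (so $t = pr/(p-r)$): one needs $\sigma_2 \in L^t$, which requires comparing $t$ to the Sobolev conjugate of $r$. The inequality $pr/(p-r) \leq nr/(n-r)$ is equivalent to $p \geq n$, so the hypothesis $p > n$ is precisely what makes the embedding $W^{1,r}(\varSigma) \hookrightarrow L^t(\varSigma)$ applicable in the delicate sub-regime $r < n$; the cases $r = n$ and $r > n$ are easier and handled via the other branches of the Sobolev embedding. With these three bounds in place, each is jointly bilinear in $(\sigma_1,\sigma_2)$ with constant depending only on the atlas and $\nnorm{M}_{W^{1,\infty}}$ on the supports, so combining them yields
\begin{align*}
	\nnorm{A(\sigma_1,\sigma_2)}_{W^{1,\min(p,r)}} \leq C\, \nnorm{\sigma_1}_{W^{1,p}}\, \nnorm{\sigma_2}_{W^{1,r}},
\end{align*}
which is the standard boundedness criterion for continuity of $A$ as a bilinear map between Banach spaces.
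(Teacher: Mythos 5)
Your proposal is correct and follows essentially the same route as the paper's proof: localization to trivializing charts, the distributional product rule, Morrey's embedding for the $W^{1,p}$-factor, and a H\"older--Sobolev case analysis on $r$ versus $n$ for the mixed term $M(\dd\sigma_1,\sigma_2)$, with the hypothesis $p>n$ entering at exactly the same point (your comparison $pr/(p-r)\leq nr/(n-r)\Leftrightarrow p\geq n$ is the same computation the paper performs by showing its exponent $s=(1/p+1/\bar r)^{-1}$ satisfies $\min(s,r)=\min(p,r)$). The only presentational difference is that you make the approximation argument for the product rule explicit, which the paper leaves implicit.
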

\begin{proof}
It suffices to perform the regularity analysis locally. Thus, we may focus our attention to an open set $U \subset \varSigma$ and we may assume for each $i \in \set{1,2,3}$ that $E_i|U \cong U \times X_i$ is a trivial Banach bundle with a suitable Banach space $X_i$. Moreover, we may write $\sigma_1(x) = (x, f_1(x))$, $\sigma_2(x) = (x, f_2(x))$, and $\mu_x = B_x$ for all $x \in U$ with $f_1 \in W^{1,p}(\varSigma;X_1)$, $f_2 \in W^{1,r}(\varSigma;X_2)$, and $B \in W^{1,\infty}(U ;\Bil(X_1,X_2;X_3))$,
where $\Bil(X_1 , X_2;X_3)$ denotes the Banach space of continuous bilinear forms on $X_1 \times X_2$ with values in $X_3$.

The Sobolev embedding $W^{1,p}(\varSigma;X_1) \hookrightarrow L^{\infty}(\varSigma;X_1)$
shows that $B(f_1,f_2) \in L^r(\varSigma;X_3)$.
With $n \ceq \dim(\varSigma)$, one has the Sobolev embedding $W^{1,r}(\varSigma;X_2) \hookrightarrow L^{\bar r}(\varSigma;X_2)$ where
\begin{align*}
		\bar r \in 
	\begin{cases}
		\nintervalcc{1,\tfrac{n\,r}{n - r}}, &r <n,
		\\
		\intervalco{1,\infty}, & r = n,
		\\
		\intervalcc{1,\infty}, & r > n.
	\end{cases}
\end{align*}
For each smooth vector field $Y$ on $U$, we obtain
\begin{align*}
	\dd( B(f_1 , f_2)) \, Y
	&= (\dd B\,Y)(f_1 , f_2) + B(\dd f_1 \,Y,f_2(x)) + B(f_1 , \dd f_2 \,Y)
\end{align*}
and this together with the H\"{o}lder inequality implies $\dd (B(f_1 , f_2)) \in L^{\min(s,r)}(\varSigma;\Hom(T'\varSigma;X_3))$, hence $B(f_1 , f_2) \in W^{1,\min(s,r)}(\varSigma;X_3)$,
where $s= \left(\frac{1}{p}+\frac{1}{\bar r}\right)^{-1}$.
We analyse the following three cases:
\newline
\emph{Case 1.: $r<n$.} Because of $p>n>r$, we have
$
	s 
	= \big(\tfrac{1}{p} + \tfrac{1}{r} - \tfrac{1}{n}\big)^{-1}
	> \big(\tfrac{1}{n} + \tfrac{1}{r} - \tfrac{1}{n}\big)^{-1}
	= r,
$
so that $\min(s,r) = r = \min(p,r)$.
\newline
\emph{Case 2.: $r=n$.} One may write $r=n= (1 + \varepsilon)^{-1} p<p$ with some $\varepsilon>0$. Choosing $\bar r = \frac{p}{\varepsilon}<\infty$, we obtain
$
	s 
	= \big(\frac{1}{p} + \tfrac{\varepsilon}{p}\big)^{-1} = \tfrac{p}{1 + \varepsilon}
	=r.
$
This shows $\min(s,r) = r = \min(p,r)$.
\newline
\emph{Case 3.: $r>n$.} Then one has $\bar r = \infty$ and $s=p$, leading directly to 
$\min(s,r) = \min(p,r)$.
Finally, the continuity of $A$ follows from the already mentioned H\"{o}lder and Sobolev inequalities.	
\end{proof}

\newpage
\printbibliography

\end{document}